\newcommand{\myName}{Alessandro~Zocca\xspace}
\newcommand{\myUni}{CWI\xspace}
\definecolor{dred}{RGB}{220,0,0}
\definecolor{halfgray}{gray}{0.55}
\definecolor{webgreen}{rgb}{0,.5,0}
\definecolor{webbrown}{rgb}{.6,0,0}
\newcommand{\ie}{i.e., }
\newcommand{\ed}{\,{\buildrel d \over =}\,}
\newcommand{\cd}{\xrightarrow{d}}
\newcommand{\E}{\mathbb E}
\newcommand{\pr}[1]{\mathbb P \Big ( #1 \Big )}
\newcommand{\prin}[1]{\mathbb P ( #1 )}
\newcommand{\st}{\mathbin{\lvert}}
\newcommand{\N}{\mathbb N}
\newcommand{\R}{\mathbb R}
\newcommand{\Z}{\mathbb Z}
\renewcommand{\b}{\beta}
\newcommand{\e}{\varepsilon}
\newcommand{\h}{\eta}
\renewcommand{\o}{\omega}
\renewcommand{\t}{\tau}
\renewcommand{\l}{\lambda}
\newcommand{\s}{\sigma}
\newcommand{\G}{\Gamma}
\renewcommand{\L}{\Lambda}
\newcommand{\cX}{\mathcal{X}}
\renewcommand{\ss}{\cX^s}
\newcommand{\binf}{\b \to \infty}
\newcommand{\limb}{\lim_{\binf}}
\newcommand{\rmexp}{\mathrm{Exp}}
\newcommand{\LT}{\mathcal{L}}
\newcommand{\bs}{\mathbf{s}}
\newcommand{\bsp}{\mathbf{s}'}
\renewcommand{\aa}{\bs}
\newcommand{\bb}{\bsp}
\newcommand{\xtbb}{\{X^\b_t \}_{t \in \N}}
\newcommand{\tss}{\t^{\bs}_{\bsp}}
\newcommand{\tsx}{\t^{\bs}_{\ss \setminus \{\bs\}}}
\newcommand{\mm}{M} 
\newcommand{\D}{\Delta H}
\newcommand{\tha}{\tau^\s_{A}}
\newcommand{\xx}{\mathbf{x}}
\newcommand{\yy}{\mathbf{y}}
\newcommand{\ww}{\mathbf{w}}
\newcommand{\zz}{\mathbf{z}}
\newcommand{\bbb}{\mathbf{b}}
\newtheorem{thm}{Theorem}[section]
\newtheorem{lem}[thm]{Lemma}
\newtheorem{prop}[thm]{Proposition}
\theoremstyle{definition}
\title{Low-temperature behavior of the multicomponent\\Widom-Rowlison model on finite square lattices} 
\date{\today}
\author{Alessandro~Zocca\thanks{Centrum Wiskunde \& Informatica, Amsterdam. Email: \texttt{zocca@cwi.nl}}}
\begin{document}

\maketitle

\begin{abstract}
\noindent We consider the multicomponent Widom-Rowlison with Metropolis dynamics, which describes the evolution of a particle system where $M$ different types of particles interact subject to certain hard-core constraints. Focusing on the scenario where the spatial structure is modeled by finite square lattices, we study the asymptotic behavior of this interacting particle system in the low-temperature regime, analyzing the tunneling times between its $M$ maximum-occupancy configurations, and the mixing time of the corresponding Markov chain. In particular, we develop a novel combinatorial method that, exploiting geometrical properties of the Widom-Rowlinson configurations on finite square lattices, leads to the identification of the timescale at which transitions between maximum-occupancy configurations occur and shows how this depends on the chosen boundary conditions and the square lattice dimensions.
\end{abstract}

\section{Introduction}
\label{sec1}
The Widom-Rowlinson model was originally introduced in the chemistry literature by~\cite{WR70} as a continuum model of particles living in $\mathbb R^d$ to study the vapor-liquid phase transition. The discrete-space variant we are interested in was first studied in~\cite{LG71}, where the authors model a lattice gas where two types of particles interact. The Widom-Rowlinson model has in fact two equivalent standard formulations - one as a binary gas and the other as a single-species model of a dense (liquid) phase in contact with a rarefied (gas) phase. In the binary gas formulation, the only interaction is a hard-core exclusion between the two species of particles - call them A and B. If the B particles are invisible, the resulting system of A particles yields the model for vapor-liquid phase transitions introduced and discussed by Widom and Rowlinson~\cite{WR70}. 

The Widom-Rowlinson model has been proven to exhibit a phase transition both on $\Z^d$~\cite{LG71} and on the continuum~\cite{CCK95,R71}. On general graphs, however, its behavior is not always monotonic, as proved in~\cite{BHW99}. 

Many variants of the Widom-Rowlison model have been studied in the literature.
First of all, the generalization to the case where there is only strong repulsion (and no hard-core interaction) between unlike particles has been investigated both on $\Z^d$~\cite{LG71} and on the continuum~\cite{LL72}. The existence of a phase transition has been established also in the case of non-equal fugacities in~\cite{BricmontKurodaLebowitz1984}.
More recently, the Windom-Rowlison model with asymmetric hard-core exclusion diameters has been studied, both on the lattice~\cite{MSSZ14} and on the continuum~\cite{Mazel2015}.
The Widom-Rowlinson model and its properties have also been studied on different discrete space structures, such as the Bethe lattice~\cite{LMNS95,WW70}, other $d$-dimensional lattices~\cite{H02}, and $d$-regular graphs~\cite{CohenPerkinsTetali2016}.

The generalization to the scenario with a finite number of $\mm$ particle types which we consider in this paper has been introduced by Runnels and Lebowitz~\cite{RL74} under the name \textit{multicomponent Widom-Rowlison model}. In the discrete-space version of this model there is nearest-neighbor hard-core exclusion between particles of different types. The multicomponent Widom-Rowlison model has been studied also in the continuum and with many of the variants described above for the case $M=2$, see e.g.~\cite{GeorgiiZagrebnov2001,LL72,LMNS95,NL97,RL74}.

In this paper we focus on the \textit{dynamics} of the multicomponent Widom-Rowlison model on a \textit{finite graph} $G$. Assuming that the $\mm$ particle types have all the same \textit{fugacity} $\l \geq 1$, the evolution over time of this interacting particle system is described by a reversible single-site update Metropolis Markov chain parametrized by the fugacity. More precisely, at every step a site of $G$ and a particle type are selected uniformly at random; if such a site is unoccupied, then a particle of the chosen type is placed there with probability $1$ if and only if all the neighboring sites are not occupied by particles of different types; if instead the selected site is occupied, the particle is removed with probability $1/\l$. This particle dynamics can be equivalently described in terms of the \textit{inverse temperature} $\b =\log \l$ so that the Markov chain under consideration rewrites as a \textit{Freidlin-Wentzell Markov chain} $\xtbb$ with stationary distribution the \textit{Gibbs measure}
\begin{equation}
\label{eq:gibbs}
	\mu_\b(\s)=\frac{1}{Z_\b(G)} e^{-\b H(\s)}, \quad \s \in \cX(G),
\end{equation}
where $\cX(G)$ is the collection of admissible Widom-Rowlinson configurations on $G$, the Hamiltonian $H: \cX(G) \to \R$ assigns to each admissible configuration an energy proportional to the number of particles it has (regardless of their type), \ie $H(\s)=-(N_1(\s)+\dots+N_M(\s))$, and $Z_\b(\L)$ is the normalizing partition function. 

We are interested in particular in the low-temperature regime for the Widom-Rowlison model in which the Gibbs measure~\eqref{eq:gibbs} favors configurations with a large number of particles. In particular, if the underlying graph is connected, there are exactly $M$ maximum-occupancy configurations, in which all the sites of $G$ are occupied by particles of the same type. For large value of the inverse temperature $\b$, these particle configurations become very ``rigid'', in the sense that it takes a long time for the Markov chain $\xtbb$ to move from one to another, as these transitions involve the occurrence of rare events. Indeed, along any such a transition, the interacting particle system must visit configurations with multiple particle types, which become highly unlikely as $\binf$ in view of~\eqref{eq:gibbs}, since the total number of particles in these mixed-activity configurations is smaller due to the fact that a layer of unoccupied sites that must separate cluster of particles of different types.

In order to describe the low-temperature behavior of the Widom-Rowlison model, the main objects of interest in this paper are the \textit{tunneling times}, \ie the first hitting times between maximum-occupancy configurations, and the \textit{mixing time} of the Markov chain $\xtbb$. 

In the present paper we focus in particular on the case where the graph $G$ is a \textit{finite square lattice}, which we denote by $\L$. By studying the geometrical features of the Widom-Rowlison configurations on this class of graphs, we describe the most likely way for the transitions between maximum-occupancy configurations to occur by identifying the the minimum energy barrier $\G(\L)>0$ along all the paths connecting them in the corresponding \textit{energy landscape}. Our analysis shows in particular how this minimum energy barrier $\G(\L)$ depends on the square lattice $\L$ dimensions, as well as on the chosen boundary conditions.

This analysis of the structural properties of the energy landscape in combination with the general framework for Metropolis Markov chains developed in~\cite{NZB16} leads to our main result, which is presented in the next section. In particular, we characterize the asymptotic behavior for the tunneling times between maximum-occupancy configurations giving sharp bounds in probability and proving that the order of magnitude of their expected values as well as the mixing time are equal to $\G(\L)$ on a logarithmic scale. Furthermore, we prove that the asymptotic exponentiality of the tunneling times scaled by their means as $\binf$.

The low-temperature behavior of another interacting particle system, the \textit{hard-core model}, has already been studied using similar techniques on finite square lattices~\cite{NZB16} and triangular lattices~\cite{Zocca2017}. This is not surprising, as these two models are intimately related, as shown in~\cite[Section 5]{BHW99}.

\section{Model description and main results}
\label{sec2}
Consider a finite undirected graph $G=(V,E)$, which models the spatial structure of the finite volume where $M$ types of particles dynamically interact subject to certain hard-core constraints. The $N$ vertices of the graph $G$ represent all possible sites where particles can reside. Particles can be of $\mm \geq 2$ types, and there is nearest-neighbor hard-core exclusion between unlike particles. In other words, the hard-core constraints are modeled by the set $E$ of edges connecting the pairs of sites that cannot be occupied simultaneously by particles of different types.  We associate a variable $\s(v) \in \{0,1,\dots,\mm\}$ to each site $v \in V$, indicating the absence ($\s(v)=0$) or the presence of a particle of type $m$ ($\s(v)=m$). 

A \textit{Widom-Rowlison (WR) configuration} on $G$ is a particle configuration $\s \in \{0,1, \dots, \mm\}^N$ that does not violate the hard-core constraints between unlike particles in neighboring sites; we denote their collection by
\begin{equation}
\label{eq:wrconstraints}
	\cX:=\{ \s \in \{0,1,\dots,\mm\}^N ~:~ \s(v)\s(w) =0 \, \text{ or } \, \s(v)=\s(w) \quad \forall (v,w) \in E \}.
\end{equation} 
In the special case where there only $\mm=2$ particle types (say A and B), there is a more compact representation for WR configurations: By associating a variable $\s(v) \in \{-1,0,1\}$ to each site $v \in V$, indicating the absence ($0$) or the presence of a particle of type A ($1$) or type B ($-1$) in that site, the collection of WR configurations on $G$ rewrite as
$
	\cX =\{ \s \in \{-1,0,1\}^N ~:~ \s(v)\s(w) \neq -1 \quad \forall (v,w) \in E \}.
$

The evolution of this interacting particle system is described by the Metropolis dynamics. More specifically, we consider the single-site update Markov chain $\smash{\xtbb}$ parametrized by a positive parameter $\b$, representing the \textit{inverse temperature}, with Metropolis transition probabilities
\[
	P_\b(\s,\s'):=
	\begin{cases}
		q(\s,\s') e^{-\b [H(\s')-H(\s)]^+}, 	& \text{ if } \s \neq \s',\\
		1-\sum_{\h \neq \s} P_\b(\s,\h), 	& \text{ if } \s=\s',
	\end{cases}
\]
corresponding to the \textit{energy landscape} $(\cX, H,q)$ where the state space $\cX$ is given in~\eqref{eq:wrconstraints} and the \textit{energy} $H$ and \textit{connectivity function} $q$ are chosen as follows. 

The energy function or Hamiltonian $H: \cX \to \R$ counts the number of particles, regardless of their type:
\begin{equation}
\label{eq:energyfunction_WR}
	H(\s) := - \sum_{v \in V} \mathds{1}_{\{\s(v) \neq 0\}}.
\end{equation}
The connectivity function $q: \cX \times \cX \to [0,1]$ allows only single-site updates and prescribes that a particle occupying a certain site cannot be replaced by a particle of the other type in a single step, \ie we define
\begin{equation}
\label{eq:connectivityfunction_WR}
	q(\s,\s'):=
	\begin{cases}
		\frac{1}{\mm N}, 							& \text{if } d(\s,\s')=1,\\
		0, 													& \text{if } d(\s,\s')>1,\\
		1 - \sum_{\h \neq \s} c(\s,\h), 	& \text{if } \s=\s',
	\end{cases}
\end{equation}
where $d: \cX \times \cX \to \N$ is a distance function on $\cX$ defined as
\begin{equation}
\label{eq:distance_WR}
	d(\s,\s'):=\sum_{v \in V} \left ( \mathds{1}_{\{\s(v)\neq \s'(v)\}} +  \mathds{1}_{\{\s(v)\s'(v)\neq 0\}} \right ).
\end{equation}
The dynamics induced by this energy landscape can be described in words as follows. At every step a site $v \in V$ and a type $m$ are selected uniformly at random; if the selected site $v$ is occupied, the particle therein is removed with probability $e^{-\b}$; if instead the selected site $v$ is empty, then a particle of type $m$ is created at $v$ with probability $1$ if and only if none of the neighboring sites is occupied by particles of different types.

The Markov chain $\smash{\xtbb}$ is aperiodic and irreducible on $\cX$, as well as reversible with respect to the \textit{Gibbs measure} associated to the Hamiltonian $H$, \ie
\[
	\mu_\b(\s):=\frac{1}{Z_\b} e^{-\b H(\s)}, \quad \s \in \cX,
\]
where $Z_\b:=\sum_{\s' \in \cX} e^{-\b H(\s')}$ is the normalizing partition function. At low temperature (\ie for large values of $\b$) the Gibbs measure $\mu_\b$ concentrates around the set $\ss$ of global minima of the Hamiltonian $H$ on $\cX$, to which we will henceforth refer as \textit{stable configurations}. 

For every $m=1\dots,\mm$, let $\bs^{(m)}$ be the WR configurations on $G$ in which all the sites are occupied by particles of type $m$, \ie
\[
	\bs^{(m)}(v) := m, \quad \forall \, v \in V.
\]
It is easy to identify these $M$ configurations as the stable configurations of the Widom-Rowlison model on any \textit{connected} graph $G$, as stated by the following lemma.

\begin{lem}[Stable configurations of Widom-Rownlison model]\label{lem:stablestates}
If $G$ is a connected graph, then the stable configurations of the Widom-Rowlison model with $M$ particle types on $G$ are
\[
	\ss = \{\bs^{(1)},\dots,\bs^{(\mm)}\}.
\]
\end{lem}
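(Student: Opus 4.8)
The plan is to exploit the fact that the Hamiltonian~\eqref{eq:energyfunction_WR} simply counts occupied sites, so that minimizing $H$ over $\cX$ is equivalent to maximizing the number of occupied sites. First I would record the trivial lower bound $H(\s) \geq -N$ valid for every $\s \in \cX$, with equality if and only if $\s(v) \neq 0$ for all $v \in V$, \ie every site is occupied. The whole statement then reduces to identifying which admissible configurations attain this bound, and the claim is that these are exactly the $\mm$ monochromatic configurations $\bs^{(1)}, \dots, \bs^{(\mm)}$.

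The easy inclusion is to verify that each $\bs^{(m)}$ lies in $\cX$ and realizes $H(\bs^{(m)}) = -N$. Admissibility is immediate from~\eqref{eq:wrconstraints}: along any edge $(v,w) \in E$ one has $\bs^{(m)}(v) = \bs^{(m)}(w) = m$, so the second alternative $\s(v)=\s(w)$ of the WR constraint is satisfied. Since all $N$ sites are occupied, $H(\bs^{(m)}) = -N$, the global minimum identified above, so $\{\bs^{(1)}, \dots, \bs^{(\mm)}\} \subseteq \ss$. For the reverse inclusion, suppose $\s \in \cX$ satisfies $H(\s) = -N$, so that $\s(v) \neq 0$ for every $v \in V$. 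The key observation is that full occupancy together with the hard-core constraint forces neighboring sites to carry the same type: for any edge $(v,w) \in E$ we have $\s(v)\s(w) \neq 0$, hence the admissibility condition in~\eqref{eq:wrconstraints} can hold only through its second alternative $\s(v)=\s(w)$. Thus $\s$ is constant along every edge of $G$.

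At this point I would invoke connectivity of $G$ to conclude: since $\s$ is constant across each edge, for any two vertices $u,u' \in V$ a path joining them in $G$ yields $\s(u)=\s(u')$, so $\s$ takes a single value $m \in \{1,\dots,\mm\}$ on all of $V$, \ie $\s = \bs^{(m)}$. This gives $\ss \subseteq \{\bs^{(1)}, \dots, \bs^{(\mm)}\}$ and completes the proof. I do not expect a genuine obstacle here, as the argument is purely combinatorial; the only substantive use of the hypothesis is this final propagation step, which is precisely what rules out configurations that are locally monochromatic on distinct components but globally mix several types, and which would fail for disconnected $G$.
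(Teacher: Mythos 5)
Your proof is correct and follows essentially the same route as the paper's: both establish the bound $H(\s) \geq -N$, note the monochromatic configurations $\bs^{(m)}$ attain it, and then use full occupancy plus the hard-core constraint to force $\s$ to be constant along edges, with connectivity of $G$ propagating a single type across all of $V$. Your version merely spells out the admissibility check and the propagation step more explicitly than the paper does.
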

\begin{proof}
Clearly $H(\bs^{(m)})=-N$ for every $m=1,\dots,\mm$ and $H(\s) \geq -N$ for every $\s \in \cX$. Suppose by contradiction that there exist a WR configuration $\s \neq \bs^{(1)},\dots,\bs^{(\mm)}$ such that $H(\s)=-N$. Trivially $\s$ cannot have any unoccupied site, otherwise $H(\s) >-N$. Furthermore, since in every pair of sites connected by an edge there must reside particles of the same type, the connectedness of $G$ implies that $\s=\bs^{(m)}$ for some $m$.
\end{proof}

In the rest of the paper we focus on the study of the Widom-Rowlison model on \textit{finite square lattices}. More specifically, given two integers $K,L \geq 2$, we take $G$ to be the $K \times L$ square lattice $\L$ either with periodic boundary conditions or with open boundary conditions.
Some examples of WR configurations on square lattices are shown in Figures~\ref{fig:aabbcc} and~\ref{fig:m3mix}, in which empty sites are displayed as white, while occupied sites are displayed using colors, associating one color to each particle type. Furthermore, we will also use particles' color and type interchangeably.

\begin{figure}[!ht]
	\centering
	\subfloat{\includegraphics[scale=0.95]{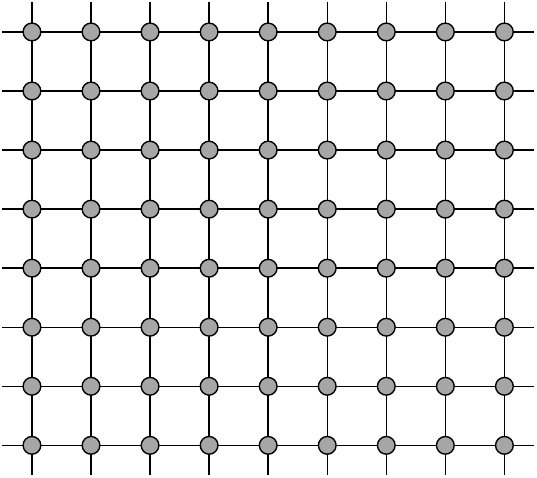}}
	\hspace{0.3cm}
	\subfloat{\includegraphics[scale=0.95]{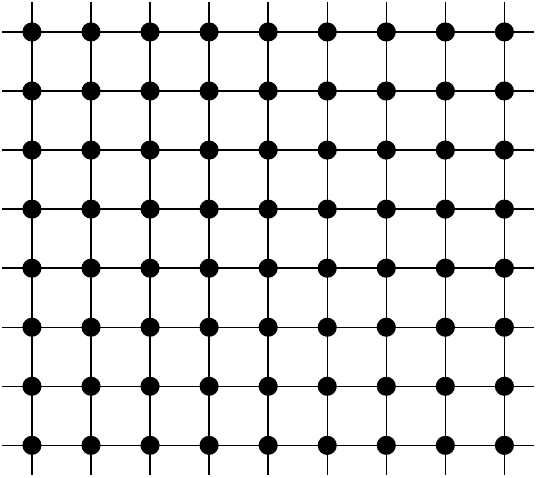}}
	\hspace{0.3cm}
	\subfloat{\includegraphics[scale=0.95]{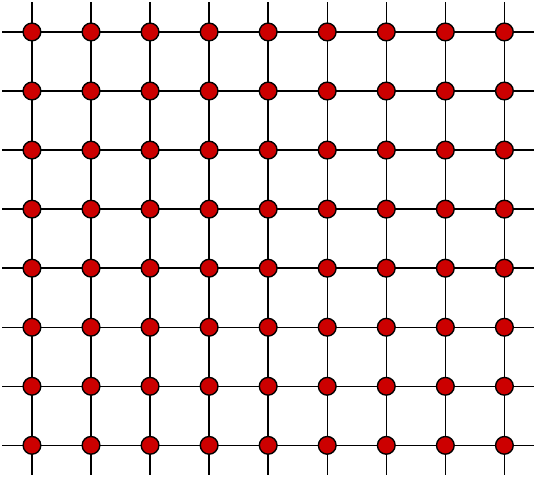}}
	\caption{The three stable configurations $\bs^{(1)},\bs^{(2)},\bs^{(3)}$ of the Widom-Rowlinson model with $\mm=3$ types of particles on the $8\times 9$ square lattice with periodic boundary conditions}
	\label{fig:aabbcc}
\end{figure}
\begin{figure}[!ht]
	\centering
	\subfloat{\includegraphics[scale=0.95]{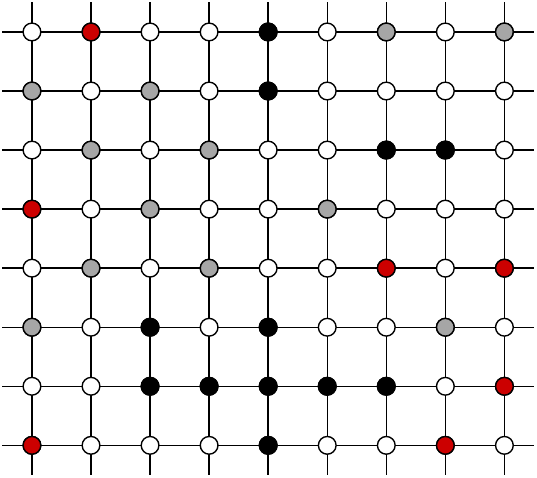}}
	\hspace{0.3cm}
	\subfloat{\includegraphics[scale=0.95]{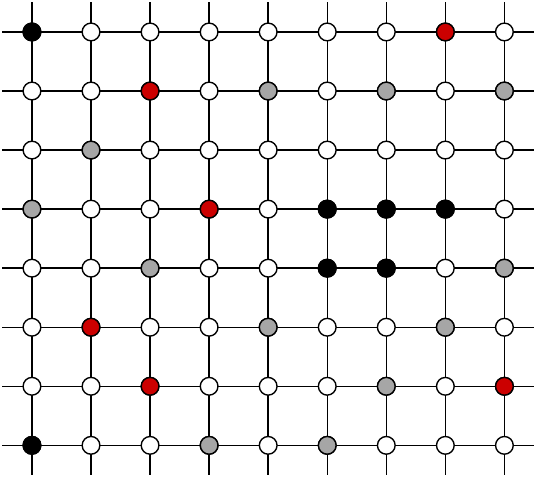}}
	\hspace{0.3cm}
	\subfloat{\includegraphics[scale=0.95]{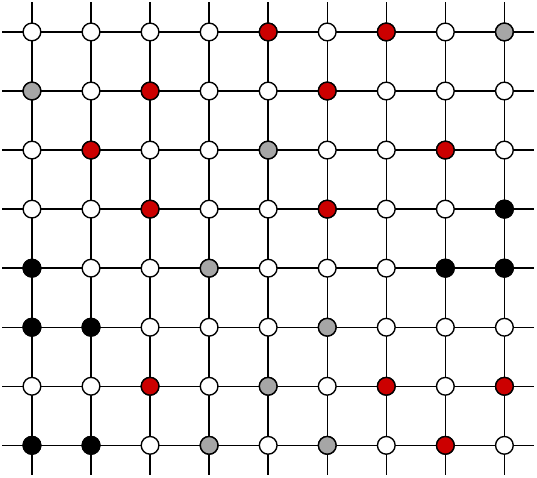}}
	\caption{WR configurations with $\mm=3$ types of particles on the $8\times 9$ square lattice with periodic boundary conditions}
	\label{fig:m3mix}
\end{figure}
\FloatBarrier

Aiming to describe the low-temperature behavior of the Widom-Rowlison model on finite square lattice, we study the Metropolis dynamics $\xtbb$ in this regime and characterize the asymptotic behavior of this interacting particle system in terms of its \textit{tunneling times}, \ie the first hitting times $\tha$ for the Markov chain $\smash{\xtbb}$
\[
	\tha:=\inf \{ t\in \N  : X^{\b}_t \in A \st X^{\b}_0=\s\},
\]
where both the initial and target states are stable configurations, \ie $\s \in \ss$ and $A \subset \ss$. 

For any $\bs,\bsp \in \ss$, we prove the existence of an exponent $\G(\L)>0$ for any finite square lattice $\L$ that gives an asymptotic control in probability of the tunneling times $\smash{\tss}$ and $\smash{\tsx}$ on a logarithmic scale as $\binf$ and characterizes the asymptotic order of magnitude of the expected value. 
The same exponent $\G(\L)$ also describes the timescale at which the Markov chain $\xtbb$ converges to stationarity in the ow-temperature regime: we make this statement rigorous in terms of the \textit{mixing time} of the Markov chain $\xtbb$, defined for every $0 < \e < 1$ as
\[
	t^{\mathrm{mix}}_\b(\e):=\min\{ n \geq 0 ~:~ \max_{\s \in \cX} \| P^n_\b(\s,\cdot) - \mu_\b(\cdot) \|_{\mathrm{TV}} \leq \e \},
\]
where $\| \mu - \mu' \|_{\mathrm{TV}}:=\frac{1}{2} \sum_{\s \in \cX} |\mu(\s)-\mu'(\s)|$ is the total variation distance between any pair of probability distributions $\mu,\mu'$ on $\cX$. Lastly, we show that both the tunneling times $\tss$ and $\tsx$ scaled by their mean converges in distribution to an exponential unit-mean random variable. All these findings are summarized in the following theorem, which is our main result.

\begin{thm}[Low-temperature behavior of the Widom-Rowlinson model on finite square lattices]\label{thm:main}
Consider the Metropolis Markov chain $\xtbb$ corresponding to the Widom-Rowlison model with $\mm$ types of particles on a $K \times L$ finite square lattice $\L$ and define
\begin{equation}
\label{eq:gamma}
	\G(\L) :=
	\begin{cases}
		2K & \text{ if } \L \text{ has periodic boundary conditions, } K = L \text{, and } K+L \geq 6, \\	
		\min \{ 2K, 2L\}+1 & \text{ if } \L \text{ has periodic boundary conditions, } K \neq L \text{, and } K+L \geq 6, \\
		\min \{ K,L \} +1 & \text{ if } \L \text{ has open boundary conditions}.
	\end{cases}
\end{equation}
Then, for any $\bs,\bsp \in \ss$ the following statements hold:
\begin{itemize}
	\item[\textup{(i)}] For every $\e>0$, $\displaystyle \limb \pr{ e^{\b (\G(\L)-\e)} < \tsx \leq \tss < e^{\b (\G(\L)+\e)}} =1;$
	\item[\textup{(ii)}] $\displaystyle \limb \frac{1}{\b} \log \E \tss = \G(\L) = \limb \frac{1}{\b} \log \E \tsx;$
	\item[\textup{(iii)}] $\displaystyle \frac{\tsx}{\E \tsx} \cd \rmexp(1), \quad \mathrm{as} \, \, \binf;$
	\item[\textup{(iv)}] $\displaystyle \frac{\tss}{\E \tss} \cd \rmexp(1), \quad \mathrm{as} \, \, \binf.$
\end{itemize}
Furthermore, 
\begin{itemize}
\item[\textup{(v)}] For any $0 < \e < 1$ the mixing time $t^{\mathrm{mix}}_\b(\e)$ of the Markov chain $\xtbb$ satisfies
\[
	\limb \frac{1}{\b} \log t^{\mathrm{mix}}_\b(\e) = \G(\L).
\]
\end{itemize}
\end{thm}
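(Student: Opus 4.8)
The plan is to deduce all five statements from the general framework for Metropolis Markov chains developed in~\cite{NZB16}, whose hypotheses become model-independent once three structural ingredients are supplied: the set of stable configurations, the minimum energy barrier $\G(\L)$ separating them, and a separation-of-timescales estimate guaranteeing that the chain is not trapped in any valley deeper than $\G(\L)$ outside of $\ss$. The first ingredient is already furnished by Lemma~\ref{lem:stablestates}, which identifies $\ss=\{\bs^{(1)},\dots,\bs^{(\mm)}\}$ on any connected graph, and hence on $\L$. Thus the entire analytical content reduces to a combinatorial study of the energy landscape $(\cX,H,q)$ restricted to $\L$, whose output is the value of $\G(\L)$ in~\eqref{eq:gamma} together with the recurrence property. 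I would organize the argument so that (i)--(v) are obtained only at the very end as corollaries, with all the real work concentrated in the geometric estimates.

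The central quantity is the communication height $\Phi(\bs,\bsp)$ between two stable configurations, namely the min--max energy along paths from $\bs$ to $\bsp$, and I would establish $\Phi(\bs,\bsp)-H(\bs)=\G(\L)$ by matching an upper and a lower bound. A key structural fact, coming from the connectivity function~\eqref{eq:connectivityfunction_WR}, is that a particle cannot be replaced by one of another type in a single move, so every site that changes type must pass through the empty state; combined with the hard-core exclusion~\eqref{eq:wrconstraints}, this forces an empty buffer between clusters of different types at every intermediate stage. For the upper bound I would exhibit an explicit \emph{reference path} realizing the transition from $\bs^{(m)}$ to $\bs^{(m')}$, sweeping the lattice one column (resp.\ row) at a time, first emptying a column and then refilling it with the target type, while always maintaining a single thin empty strip separating the converted region from the original one. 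The maximal energy along such a path is attained when the separating interface has minimal admissible length: on an open $K\times L$ lattice this is a straight segment of $\min\{K,L\}$ sites plus one extra empty site needed to advance the interface, giving $\min\{K,L\}+1$; on a periodic lattice a single straight interface does not disconnect the torus, so the empty region must consist of two parallel non-contractible interfaces, which accounts for the doubling to $2K$ or $2\min\{K,L\}$. The precise additive constant, and in particular its disappearance in the square periodic case $K=L$, is a delicate feature of how the two interfaces can be advanced simultaneously, and I would pin it down by a careful case analysis of the optimal sweeping path.

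The matching \emph{lower bound} is the step I expect to be the main obstacle: one must prove that \emph{every} path from $\bs$ to $\bsp$ visits a configuration of energy at least $H(\bs)+\G(\L)$, i.e.\ one containing at least $\G(\L)$ empty sites. This is an isoperimetric statement about WR configurations on $\L$. Along any transition the set of type-$m$ particles shrinks from all of $V$ to $\emptyset$ while the set of type-$m'$ particles grows from $\emptyset$, and since single-site updates change the configuration one site at a time, there is a first instant at which the converted region can no longer be separated from the original one by fewer than a critical number of empty sites. The heart of the matter is to show that the separating empty set, viewed as a cut disconnecting the two species on the (possibly toroidal) lattice, has size at least $\G(\L)$, and that this minimal size depends on the boundary conditions and on $K,L$ exactly as claimed. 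I would prove this by tracking a suitable order parameter along an arbitrary path to locate such a crossing configuration and then invoking a discrete min-cut/isoperimetric estimate on $\L$; the periodic case additionally requires the observation that any cut of the torus separating the two types has at least two non-contractible components, forcing the factor of two.

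With $\G(\L)$ determined, it remains to verify the recurrence hypothesis of~\cite{NZB16}, namely that the maximal stability level $\max_{\s\in\cX\setminus\ss}V_\s$ is strictly smaller than $\G(\L)$, which ensures that from any configuration the chain descends to $\ss$ on a strictly shorter timescale and that the only deep wells are the $\mm$ stable states. Granting this, statements (i) and (ii) follow directly from the general hitting-time estimates expressed in terms of $\G(\L)$; the asymptotic exponentiality (iii)--(iv) follows from the corresponding general result once one also notes the absence of any intermediate stable configuration of equal depth, so that $\bs$ and $\bsp$ behave as a single pair of competing minima; and the mixing-time asymptotics (v) follows from the general identity relating the mixing time to the largest barrier $\G(\L)$ in the landscape. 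In this concluding step there is essentially no model-specific work beyond the combinatorial estimates already established.
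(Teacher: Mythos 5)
Your overall architecture (energy-landscape analysis feeding the model-independent machinery of~\cite{NZB16}) matches the paper's, and your treatment of statements (i), (ii), (iii) and (v) is sound in outline: once $\Phi(\bs,\bsp)-H(\bs)=\G(\L)$ and the strict inequality $\Phi(\s,\ss)-H(\s)<\G(\L)$ for all $\s\notin\ss$ are established, those four claims follow exactly as you describe. The genuine gap is statement (iv) when $\mm>2$. You assert that the exponentiality of $\tss/\E\tss$ follows from the general result ``once one also notes the absence of any intermediate stable configuration of equal depth, so that $\bs$ and $\bsp$ behave as a single pair of competing minima.'' But for $\mm>2$ there \emph{are} intermediate stable configurations of exactly equal depth: every $\s''\in\ss\setminus\{\bs,\bsp\}$ satisfies $\Phi(\s'',\{\bs,\bsp\})-H(\s'')=\G(\L)$, by the very communication-height identity you prove in the first part. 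Hence the sufficient condition of the general asymptotic-exponentiality theorem (inequality~\eqref{eq:suffcondPE} strengthened to the strict form~\eqref{eq:suffcondAE}, which requires the barrier from $\bs$ to the target to strictly dominate every other barrier toward the target union $\{\bs\}$) fails for the pair $(\bs,\bsp)$: the chain started at $\bs$ can fall into one of the other $\mm-2$ stable states and remain there for a time of the same exponential order before reaching $\bsp$, so the pair does \emph{not} behave as two competing minima, and the general theorem simply does not apply.

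The paper closes this gap with an extra argument that your proposal would need to supply. By symmetry of the landscape under permutations of particle types, one shows (Proposition~\ref{prop:stochasticrepresentation}) that the exit configuration $X^\b_{\tsx}$ is uniformly distributed on $\ss\setminus\{\bs\}$, that it is independent of $\tsx$, and that the law of $\tsx$ does not depend on $\bs$; consequently $\tss$ is equal in distribution to a geometric random sum (success probability $(\mm-1)^{-1}$) of i.i.d.\ copies of $\tsx$. Combining this representation with the exponentiality of $\tsx/\E\tsx$ (your statement (iii)) and the closure of the unit-mean exponential law under scaled geometric compounding---proved via Laplace transforms and the continuity theorem---yields (iv). Without this (or some alternative renewal-type decomposition), your proof of (iv) is incomplete for $\mm>2$; for $\mm=2$ it is fine, since then $\tss$ and $\tsx$ coincide in distribution. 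A secondary, lesser remark: your lower bound on $\Phi(\bs,\bsp)$ is argued via a min-cut/isoperimetric heuristic, whereas the paper runs a combinatorial analysis of bridges, quasi-bridges and quasi-crosses along an arbitrary path, including a two-steps-back examination of $\o_{n^*-1}$ and $\o_{n^*-2}$ to extract the additive $+1$ in the non-square periodic case; your sketch correctly flags this constant as delicate but does not yet contain the argument that produces it.
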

The proof of this theorem relies on asymptotic results for tunneling and mixing times obtained in~\cite{NZB16} for any Metropolis Markov chains and on the study of structural properties of the energy landscape $(\cX,H,q)$ corresponding to the Widom-Rowlison model on $\L$, which are outlined in the next section.

As illustrated in~\eqref{eq:gamma}, the exponent $\G(\L)$ depends both on the lattice dimensions and on the chosen boundary conditions, and the reason behind this will become clear in Sections~\ref{sec5} and~\ref{sec6}, in which we analyze geometrical features of WR configurations on square lattices and develop the combinatorial approach that lead to the identification of its value. The additional assumption that $K+L\geq 6$ in the case of periodic boundary conditions is necessary to leave out the three special cases $(K,L)=(2,2),(3,2),(2,3)$ in which our combinatorial argument does not work. However, it is easy to prove analogous results for these three special cases, in which the exponent $\G(\L)$ can be easily shown to take respectively the values $3$, $4$, and $4$.

We remark that from our analysis also yields similar results for a square lattice $\L$ with semi-periodic boundary conditions (\ie periodic in one direction and open in the other direction). Indeed, by opportunely combining the results derived in Sections~\ref{sec5} and~\ref{sec6} for the two types of boundary conditions, we can derive that for this class of graphs $\G(\L)= \min \{ K, 2L \}+1$. A similar argument has also been used for the hard-core model on finite square lattice with semi-periodic boundary conditions in~\cite[Section 5.3]{NZB16} and for this reason the details are here omitted.

\section{Analysis of the energy landscape}
\label{sec3}
In this section we derive some structural properties of the energy landscapes of the Widom-Rowlison model on square lattices and show how they can be used to prove our main result. Such properties are summarized in Theorem~\ref{thm:structuralpropertiesWR} below, but we need first to introduce some definitions and useful notation.

It is easy to check that the connectivity matrix $q$ given in~\eqref{eq:connectivityfunction_WR} is irreducible, \ie for any pair of configurations $\s,\s'\in \cX$, $\s \neq \s'$, there exists a finite sequence $\o$ of configurations $\o_1,\dots,\o_n \in \cX$ such that $\o_1=\s$, $\o_n=\s'$ and $q(\o_i,\o_{i+1})>0$ for $i=1,\dots, n-1$ (\ie $\o_i$ and $\o_{i+1}$ differ by an admissible single-site update). We refer to such a sequence as a \textit{path} from $\s$ to $\s'$ and denote it as $\o: \s \to \s'$. The \textit{height} $\Phi_\o$ of a path $\o=(\o_1,\dots,\o_n)$ is the maximum value that the energy takes along $\o$, \ie
\[
	\Phi_\o:= \max_{i=1,\dots,n} H(\o_i).
\]
The \textit{communication height} between any pair of configurations $\s,\s' \in\cX$ is defined as
\[
	\Phi(\s,\s') := \min_{\o : \s\to \s'} \Phi_\o = \min_{\o : \s\to \s'} \max_{i=1,\dots,|\o|} H(\o_i),
\]
and this definition naturally extends to any pair disjoint non-empty subsets $A, B \subset \cX$ as
\[
	\Phi(A,B) := \min_{\s \in A, \, \s' \in B} \Phi(\s,\s').
\]

As stated by the next theorem, we prove that the energy barrier $\Phi(\bs,\bsp) - H(\bs)$ between any pair of stable configurations $\bs,\bsp \in \ss$ is equal to $\G(\L)$ and show that this is the highest energy barrier in the whole energy landscape. 

\begin{thm}[Energy landscape properties for the Widom-Rowlison model on finite square lattices]\label{thm:structuralpropertiesWR} $ $\\
Let $(\cX,H,q)$ be the energy landscape corresponding to the Metropolis dynamics of the Widom-Rowlison model with $\mm$ types of particles on a $K \times L$ square lattice $\L$ and recall the definition of $\G(\L)$ in~\eqref{eq:gamma}. Then, 
\begin{itemize}
	\item[\textup{(i)}] $\Phi(\bs,\bsp) - H(\bs)=\G(\L)$ for every $\bs,\bsp \in \ss$, and
	\item[\textup{(ii)}] $\Phi(\s, \ss) - H(\s) < \G(\L) \quad \forall \, \s \in \cX \setminus \ss$,
\end{itemize}
\end{thm}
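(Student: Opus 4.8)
The plan is to establish both parts by a careful analysis of the geometry of Widom-Rowlinson configurations on the square lattice, identifying exactly how many particles must be removed to create a ``separating channel'' of empty sites between regions occupied by different particle types. The central quantity is the communication height $\Phi(\bs,\bsp)$ between two stable configurations, and the key physical insight is that any path from $\bs^{(m)}$ to $\bs^{(m')}$ with $m \neq m'$ must pass through configurations containing both type-$m$ and type-$m'$ particles, which (by the hard-core constraint) must be separated by a layer of empty sites. The energy cost of such a configuration is governed by the minimum number of empty sites in such a separating layer, and $\G(\L)$ in \eqref{eq:gamma} is precisely this minimal ``interface cost'' expressed as an energy increase above $H(\bs) = -N$.

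\medskip

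\textbf{Part (i): the upper bound $\Phi(\bs,\bsp)-H(\bs) \le \G(\L)$.} Here I would exhibit an explicit reference path $\o:\bs^{(m)}\to\bs^{(m')}$ whose height exceeds $H(\bs)$ by exactly $\G(\L)$. The natural construction is a ``sliding interface'': starting from the all-$m$ configuration, one empties a minimal straight channel of sites (a full row or column, possibly plus one extra empty site to break periodicity, which explains the $+1$ terms and the distinction between $K=L$ and $K\neq L$ in \eqref{eq:gamma}), then progressively fills the emptied region on one side with type-$m'$ particles while advancing the empty channel across the lattice one line at a time. Along this path the configuration always consists of a type-$m'$ block, an empty separating strip, and a type-$m$ block; the energy is maximized at the moment the empty strip is widest, and a direct count shows this maximum equals $-N+\G(\L)$. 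I would verify the count separately for periodic boundary conditions (where a channel must wrap around, so an entire row of length $K$ or column of length $L$ must be vacated, doubled because periodicity forces two disjoint interfaces, giving $2K$ or $2L$) and for open boundary conditions (where a single straight cut of length $\min\{K,L\}$ plus one site suffices).

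\medskip

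\textbf{Part (i): the lower bound $\Phi(\bs,\bsp)-H(\bs) \ge \G(\L)$.} This is the combinatorial heart of the argument and the main obstacle. Here I would argue that \emph{every} path from $\bs^{(m)}$ to $\bs^{(m')}$ must pass through some configuration $\s$ with $H(\s) \ge -N + \G(\L)$. The strategy is to track a suitable ``interface'' or ``blocking'' quantity along the path: since both type-$m$ and type-$m'$ particles must coexist at some intermediate step (the first time a type-$m'$ particle appears while type-$m$ particles still dominate), and single-site updates change the configuration only locally, there must be a configuration in which the two types are fully separated by empty sites. The lower bound then reduces to an \emph{isoperimetric-type} statement: any subset of the lattice whose complement must be empty (to separate the two species) must have boundary at least $\G(\L)$, where ``boundary'' counts the empty sites needed. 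This is where the geometry of square lattices and the chosen boundary conditions enter decisively, and where I expect the bookkeeping — tracking how the separating empty region evolves under single-site moves and showing its size can never dip below the isoperimetric minimum — to require the delicate combinatorial method the introduction advertises for Sections~\ref{sec5} and~\ref{sec6}. I anticipate this is precisely the content deferred to those later sections, so in this section I would state the bound and reduce it to the clean isoperimetric inequality, citing the detailed proof forward.

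\medskip

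\textbf{Part (ii): strict inequality for non-stable configurations.} For any $\s \in \cX \setminus \ss$, the plan is to show $\Phi(\s,\ss) - H(\s) < \G(\L)$ by constructing a path from $\s$ that reaches some stable configuration without ever raising the energy by $\G(\L)$ above $H(\s)$. The idea is that a non-stable $\s$ already contains at least one empty site or a mixture of types, so one can reach a stable configuration via a ``monotone-ish'' descent: first grow the majority species to fill available space and remove minority particles, at each step paying at most a bounded local cost. Crucially, because $\s$ is not maximum-occupancy, one never needs to pay the full interface cost $\G(\L)$ — one can always find a ``cheap'' direction of descent exploiting the pre-existing defects. I would make this precise by showing that from any $\s\neq\bs^{(m)}$ there is a single-site move decreasing the energy, or a short sequence of moves raising it by strictly less than $\G(\L)$ before reaching $\ss$; combined with part~(i) this yields the strict inequality and establishes that the transitions between stable configurations constitute the unique highest energy barrier in the landscape.
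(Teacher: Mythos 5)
Your proposal has genuine gaps, the most serious being the lower bound in part (i). The reduction to a \emph{static} isoperimetric inequality fails for two reasons. First, a path from $\bs$ to $\bsp$ need not ever exhibit coexistence of the two species: it can empty the lattice entirely and refill it, so an argument predicated on ``both types must coexist at some intermediate step'' does not cover all paths. Second, and more fatally, the isoperimetric statement you want to reduce to is false: a configuration containing both species can have as few as four empty sites (a single black particle whose four neighbours are empty, in a sea of gray), independently of $K$ and $L$; separation of coexisting species is automatic in any admissible WR configuration and costs only $O(1)$, not $\G(\L)$. The correct bottleneck is \emph{dynamic}, not static: the paper considers the first configuration $\o_{n^*}$ along the path that displays a black \emph{bridge} or black \emph{quasi-cross} (such a time exists since $\bsp$ has bridges and $\bs$ does not), and shows -- using the incompatibility of perpendicular bridges/quasi-bridges of different colours (Lemma~\ref{lem:impossible}) and the fact that on the torus a row with neither a bridge nor a quasi-bridge has at least two vacancies (Lemma~\ref{lem:bqbi}) -- that one or two steps before $n^*$ \emph{every} row (or column) carries at least two empty sites, whence $\max\{\D(\o_{n^*-1}),\D(\o_{n^*-2})\}\geq \G(\L)$. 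Nothing in your proposal supplies this mechanism, and deferring it to Sections~\ref{sec5} and~\ref{sec6} is circular, since those sections \emph{are} the proof of the statement under discussion.

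There are two further gaps. For the upper bound in the periodic case with $K=L$, your sliding-interface path (vacate a wrap-around strip of $2K$ sites, then sweep) has height $2K+1$ above $H(\bs)$, which matches $\G(\L)$ only when $K\neq L$; when $K=L$ the claimed value is $2K$, and the paper must use a genuinely different construction (growing a diagonal ``diamond'' of black particles, Proposition~\ref{prop:refpathwrt}, case (a)) to shave off the $+1$. Your proposal does not notice this discrepancy. In part (ii), your first disjunct -- that every non-stable $\s$ admits a single-site move decreasing the energy -- is false: alternating black/empty/gray/empty columns on the torus form a strict local minimum outside $\ss$; and your second disjunct (``a short sequence raising the energy by strictly less than $\G(\L)$'') is exactly what has to be proved, with no mechanism offered. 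The paper's mechanism is quantitative: if $\s$ has a vertical bridge of type $m$, the reduction algorithm reaches $\bs^{(m)}$ at cost at most $1$; if $\s$ has no vertical bridge, then the first vertical stripe (resp.\ first column, for open boundary conditions) contains at most $2K-2$ (resp.\ $K-1$) particles of foreign type, and removing them before running the reduction algorithm gives a path of cost at most $2K-1<\G(\L)$ (resp.\ $K<\G(\L)$). Without these counting arguments the strict inequality in (ii) is not established.
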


The proofs of these statements exploit geometrical features of the WR configurations and therefore will be given separately for square lattices with periodic boundary conditions in Section~\ref{sec5} and with open boundary conditions in Section~\ref{sec6}. 
We remark that the maximum energy barrier $\G(\L)$ of the energy landscape corresponding to the multicomponent Widom-Rowlison model on square lattices does not depend on the number $\mm$ of particle types, which, indeed, has only an entropic effect (the cardinality of $\cX$ obviously grows with the number of particle types).\\

The rest of the section is devoted to the derivation of our main result, Theorem~\ref{thm:main}. The main idea of the proof is to use the structural properties of the energy landscape outlined in Theorem~\ref{thm:structuralpropertiesWR} in combination with the model-independent theory for first hitting times and mixing times developed for a general Metropolis Markov chain in~\cite{NZB16} as a extension of the classical \textit{pathwise approach}~\cite{MNOS04}. Therefore, we first briefly summarize in the next proposition a special case of the results derived in~\cite{NZB16} that is relevant for the tunneling and mixing times.
\newpage
\begin{prop}[Hitting and mixing times asymptotics~\cite{NZB16}] \label{prop:modindep}
Let  $(\cX,H,q)$ be an energy landscape and $\xtbb$ the corresponding Metropolis Markov chain.
\begin{itemize}
\item[\textup{(i)}] Let $\bs \in \ss$ be a stable configuration and define $\G^*:= \max_{\h \in \cX, \, \h \neq \bs} \Phi(\h,\bs) - H(\h)$. Then, the definition of $\G^*$ does not depend on the chosen configuration $\bs \in \ss$ and for any $0 < \e < 1$
\[
	\limb \frac{1}{\b} \log t^{\mathrm{mix}}_\b(\e) = \G^*.
\]
\item[\textup{(ii)}] Assume that a non-empty subset $A \subset \cX$ and a configuration $\s \in \cX \setminus A$ are such that the  identity
\begin{equation}
\label{eq:suffcondPE}
	\Phi(\s,A) - H(\s) = \max_{\h \in \cX \setminus A} \Phi(\h,A) - H(\h)
\end{equation}
holds. Then, setting $\G:= \Phi(\s,A) - H(\s)$, we have that
\[
	\forall \, \e>0 \quad \limb \pr{ e^{\b (\G-\e)} < \tha < e^{\b (\G+\e)}} =1, \qquad \text{ and } \qquad \limb \frac{1}{\b} \log \E \tha = \G.
\]
\item[\textup{(iii)}] Assume that a non-empty subset $A \subset \cX$ and a configuration $\s \in \cX \setminus A$ are such that the inequality
\begin{equation}
\label{eq:suffcondAE}
	\Phi(\s,A) - H(\s) > \max_{\h \in \cX \setminus A, \, \h \neq \s} \Phi(\h, A \cup \{\s\}) - H(\h)
\end{equation}
holds. Then,
\[
	\frac{\tha}{ \E \tha} \cd \rmexp(1), \quad \mathrm{ as } \, \, \binf.
\]
More precisely, there exist two functions $k_1(\b)$ and $k_2(\b)$ with $\limb k_1(\b)=0=\limb k_2(\b)$ such that for any $s>0$
\[
	\Big | \pr{\frac{ \tha }{\E \tha} > s} - e^{-s} \Big | \leq k_1(\b) e^{-(1-k_2(\b))s}.
\]
\end{itemize}
\end{prop}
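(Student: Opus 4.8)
The plan is to reduce all three statements to two fundamental hitting-time estimates expressed through the communication height $\Phi$, following the pathwise approach of \cite{MNOS04} and its refinement in \cite{NZB16}. For a target set $A$ and a configuration $\h \notin A$, write $V_\h := \Phi(\h,A) - H(\h)$ for the energy barrier the chain must overcome to reach $A$ from $\h$. The two estimates I would establish are an \emph{upper bound}, $\limb \pr{\tha \leq e^{\b(V_\s + \e)}} = 1$, and a \emph{lower bound}, $\limb \pr{\tha \geq e^{\b(V_\s - \e)}} = 1$, valid for every $\e > 0$. Granting these, the logarithmic control in probability in (ii) is immediate once hypothesis~\eqref{eq:suffcondPE} identifies $V_\s$ with $\G = \max_{\h \in \cX \setminus A} V_\h$.

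For the upper bound, the definition of $\Phi(\s,A)$ furnishes a reference path $\o : \s \to A$ with $\Phi_\o = \Phi(\s,A)$, whose traversal has Metropolis probability at least $c\, e^{-\b V_\s}$ (up to polynomial-in-$\b$ factors) obtained by multiplying the single-step rates. Splitting a time horizon $e^{\b(V_\s+\e)}$ into windows of length $|\o|$ and invoking the Markov property, the order $e^{\b\e}$ independent attempts make the no-success probability vanish; a uniform version over all starting configurations, needed later, rests on the maximality of $V_\s$ from~\eqref{eq:suffcondPE}. For the lower bound, let $\mathcal{C}_\s := \{\h : \Phi(\s,\h) < \Phi(\s,A)\}$ be the maximal region around $\s$ whose internal barriers lie strictly below the one separating $\s$ from $A$; before hitting $A$ the chain is confined to $\mathcal{C}_\s$, and a reversibility estimate bounds its one-window exit probability by $C e^{-\b(V_\s-\e/2)}$, so that $\tha$ stochastically dominates a geometric variable of mean $\gtrsim e^{\b(V_\s-\e/2)}$. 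Combining the two bounds gives the in-probability claim, and the expectation statement follows since the upper bound yields $\E\tha \leq e^{\b(\G+\e)}$ by a geometric-trials argument while the lower bound gives $\E\tha \geq e^{\b(\G-\e)}(1-o(1))$.

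For the mixing time in (i) I would sandwich $t^{\mathrm{mix}}_\b(\e)$ on the logarithmic scale. The upper bound follows from the uniform hitting estimate above: within time $e^{\b(\G^* + \e)}$ the chain started at any $\h$ reaches a stable configuration, and a short coupling among the configurations of $\ss$ brings the law of $X^\b_t$ within $\e$ of $\mu_\b$ in total variation. The matching lower bound is a bottleneck (Cheeger) estimate: choosing $\h^*$ attaining $\G^* = \max_{\h \neq \bs} \Phi(\h,\bs) - H(\h)$ exhibits a set of configurations whose conductance is of order $e^{-\b \G^*}$, forcing the spectral gap to be at most $e^{-\b(\G^*-\e)}$ and hence $t^{\mathrm{mix}}_\b(\e) \geq e^{\b(\G^* - \e)}$. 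The asserted independence of $\G^*$ from the chosen $\bs \in \ss$ I would derive from the symmetry $\Phi(\h,\bs) = \Phi(\h,\bsp)$ up to a relabeling of particle types, together with the fact that all stable configurations share the energy $-N$.

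The genuinely delicate part — and the main obstacle — is the asymptotic exponentiality in (iii). Here the strict inequality~\eqref{eq:suffcondAE} is essential: it guarantees that $\s$ is the unique deepest well for the enlarged target $A \cup \{\s\}$, so that every excursion leaving a neighborhood of $\s$ without hitting $A$ returns to $\s$ on a timescale negligible compared to $e^{\b \G}$. I would formalize this as a loss-of-memory property: conditionally on not yet having hit $A$, the chain re-equilibrates near $\s$ before each fresh attempt, so successive attempts are asymptotically independent with common small success probability $p_\b \asymp e^{-\b \G}$. A renewal decomposition then represents $\tha$ as a geometric sum of near-i.i.d.\ excursion lengths; rescaling by $\E\tha \sim 1/p_\b$ and controlling the two error terms — the deviation of the geometric from the exponential and the fluctuation of the excursion lengths — yields $|\prin{\tha / \E\tha > s} - e^{-s}| \leq k_1(\b) e^{-(1-k_2(\b))s}$ with $k_1,k_2 \to 0$. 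The subtlety is making the ``return to $\s$'' uniform in the starting point of each excursion, which is precisely where condition~\eqref{eq:suffcondAE} rules out competing metastable wells.
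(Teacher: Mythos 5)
The first thing to note is that the paper does not prove this proposition at all: it is explicitly imported from~\cite{NZB16} (``we first briefly summarize in the next proposition a special case of the results derived in~\cite{NZB16}''), so the only meaningful comparison is with the machinery of that reference. Your sketch does mirror that machinery --- the pathwise approach of~\cite{MNOS04} as refined in~\cite{NZB16}: reference-path upper bounds with repeated-trial arguments, confinement to the cycle $\{\h : \Phi(\s,\h) < \Phi(\s,A)\}$ for the lower bound, a bottleneck/spectral estimate for the mixing-time lower bound, and a renewal decomposition for the asymptotic exponentiality. At the level of strategy this is the right skeleton, and conditions~\eqref{eq:suffcondPE} and~\eqref{eq:suffcondAE} are deployed in the correct roles (uniformity of the hitting estimate, and absence of competing deep cycles, respectively).

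There is, however, one concrete flaw and one substantive incompleteness. The flaw is in part (i): you justify the independence of $\G^*$ from the chosen $\bs \in \ss$ by ``relabeling of particle types,'' but the proposition is stated for an \emph{arbitrary} energy landscape $(\cX,H,q)$ --- this is exactly why the paper can quote it as model-independent and only later feed in the WR-specific Theorem~\ref{thm:structuralpropertiesWR}. In that generality there is no particle-type symmetry, and distinct stable states need not be related by any automorphism of the landscape. The correct argument is purely landscape-theoretic: since every $\bs,\bsp \in \ss$ attains the minimal energy, the ultrametric-type inequality $\Phi(\h,\bs) \leq \max\{\Phi(\h,\bsp),\, \Phi(\bsp,\bs)\}$, combined with the observation that $\h = \bsp$ is itself one of the competitors in the maximum defining $\G^*$ for $\bs$, forces the two maxima to coincide; no symmetry is needed or available. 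The incompleteness is in part (iii), which you correctly identify as the delicate point but then assert rather than prove: the uniform subexponential recurrence to $\s$ of excursions that miss $A$, the asymptotic independence of successive attempts, and in particular the quantitative bound with explicit $k_1(\b), k_2(\b)$ are precisely the content of the cited results and do not follow from the qualitative ``loss of memory'' picture alone --- a full reconstruction would have to build the cycle decomposition and the exponential-law estimates that~\cite{NZB16} inherits from the metastability literature, rather than appeal to them.
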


Condition~\eqref{eq:suffcondPE} says that the energy barrier separating the initial configuration $\s$ from the target subset $A$ is maximum over the entire energy landscape. The authors in~\cite{NZB16} refer to this condition as ``absence of deep cycles'', since it means that all other ``valleys'' of the energy landscape (or more formally \textit{cycles}, see definition in~\cite{MNOS04}) are not deeper than the one where the Markov chain starts. On the other hand, condition~\eqref{eq:suffcondAE} guarantees that for every $\h \in \cX$ the Markov chain $\xtbb$ started in $\h$ most likely reaches either $\s$ or the subset $A$ on a time scale strictly smaller than that at which the transition from $\s$ to $A$ occurs in the limit $\binf$. We note that both conditions~\eqref{eq:suffcondPE} and~\eqref{eq:suffcondAE} are sufficient but not necessary for the corresponding statements (ii) and (iii) to hold; we refer to~\cite{NZB16} for a more elaborate discussion. 

We have now all the ingredients to present the proof of our main result, Theorem~\ref{thm:main}, leaving out however the proof of statement (iv), that is presented later, as it requires some additional work.

\begin{proof}[Proof of Theorem~\ref{thm:main}(i)-(iii) and (v)]
Let $\bs,\bsp \in \ss$ be any two stable configurations. Theorem~\ref{thm:structuralpropertiesWR}(ii) yields
\[
	 \max_{\s \not\in \ss} \Phi(\s,\ss \setminus \{\bs\}) - H(\s) < \G(\L) = \Phi(\bs, \ss\setminus \{\bs\}) - H(\bs),
\]
and, therefore, condition~\eqref{eq:suffcondPE} trivially holds for the pair $(\bs, \ss\setminus \{\bs\})$. Furthermore, taking into account also Theorem~\ref{thm:structuralpropertiesWR}(i), we get
\[
	 \max_{\s \neq \bs} \Phi(\s,\bsp) - H(\s) = \max \left \{  \max_{\s \not\in \ss} \Phi(\s,\bsp) - H(\s),  \max_{\s \in \ss \setminus \{\bs\}} \Phi(\s,\bsp) - H(\s)\right  \}= \G(\L) = \Phi(\bs, \bsp) - H(\bs),
\]
showing that condition~\eqref{eq:suffcondPE} holds also for the pair $(\bs, \bsp)$. Proposition~\ref{prop:modindep}(ii) then yields the statements (i) and (ii) of Theorem~\ref{thm:main} for both tunneling times $\tss$ and $\tsx$.

We now turn to the proof of Theorem~\ref{thm:main}(iii): The asymptotic exponentiality of the rescaled tunneling time $\tsx / \E \tsx$ immediately follows from Proposition~\ref{prop:modindep}(iii) in view of the fact that the inequality
\[
	\Phi(\bs,\ss \setminus \{\bs\})-H(\bs) = \G(\L) > \max_{\s \not\in \ss} \Phi(\s, \ss)-H(\s)
\]
holds by virtue of Theorem~\ref{thm:structuralpropertiesWR}(i) and (ii). Theorem~\ref{thm:structuralpropertiesWR}(i) also implies that the exponent $\G^*$ appearing in Proposition~\ref{prop:modindep}(i) is equal to $\G(\L)$ and this concludes the proof of statement (v).
\end{proof}

We now turn to the proof of the asymptotic exponentiality of $\tss / \E \tss$ in the case $\mm>2$. Indeed, if there are only $\mm=2$ types of particles, Theorem~\ref{thm:main}(iv) trivially holds being equivalent to Theorem~\ref{thm:main}(iii), in view of the fact that $\smash{\tss \ed \tsx}$. However, when $\mm>2$, the asymptotic exponentiality of the scaled tunneling time $\tss / \E \tss$ does not follow immediately from the model-independent results for first hitting times outlined in Proposition~\ref{prop:modindep}. Indeed, in this scenario there more than two stable configurations, since $|\ss| = \mm >2$, and any pair $\aa,\bb \in \ss$ does not satisfy condition~\eqref{eq:suffcondAE} due to the presence of deep cycles (corresponding to the other stable configurations in $\ss \setminus \{\aa,\bb\}$). Indeed, by virtue of Theorem~\ref{thm:structuralpropertiesWR}(i)
\[
	\smash{\G(\L) = \Phi(\bs,\bsp)-H(\bs) \not > \max_{\s \neq \bs, \bsp} \Phi(\s, \{ \bs,\bsp\})-H(\s) = \G(\L).}
\]
The asymptotic exponentiality of the scaled tunneling time $\t^{\aa}_{\bb} / \E \t^{\aa}_{\bb}$ in the limit $\binf$ is proved using a representation of the tunneling time $\tss$ as a geometric random sum of i.i.d.~random variables, stated in the following proposition, which exploits the intrinsic symmetry of the energy landscape $(\cX, H, q)$ corresponding to the Widom-Rownlison model on square lattices. A very similar approach has been also used in~\cite{Zocca2017} to study the tunneling time between any two stable configurations of the hard-core model on finite triangular lattices.

\begin{prop}[Tunneling time $\tss$ as geometric random sum] \label{prop:stochasticrepresentation}
For every $\bs \in \cX$ and any $\b >0$, the following properties hold for the Metropolis Markov chain $\xtbb$ corresponding to the Widom-Rowlison model with $\mm>2$ particle types on a square lattice $\L$:
\begin{itemize}
	\item[\textup{(a)}] The random variable $X^{\b}_{\tsx}$ has a uniform distribution over $\ss \setminus \{\bs\}$;
	\item[\textup{(b)}] The random variable $\tsx$ does not depend on $\bs$;
	\item[\textup{(c)}] The random variables $\tsx$ and $X^{\b}_{\tsx}$ are independent.
\end{itemize}
Furthermore, let $\{\t^{(i)}\}_{i \in \N}$ be a sequence of i.i.d.~random variables with common distribution $\smash{\tau \ed \tsx}$  and $\mathcal{G}_M$ an independent geometric random variable with success probability equal to $p_M:=(M-1)^{-1}$, \ie
\[
	\prin{\mathcal G_{M} = n} = \left ( 1- \frac{1}{M-1}\right )^{n-1} \frac{1}{M-1}, \quad n \geq 1.
\]
Then, for any pair $\bs,\bsp \in \ss$ the following stochastic representation of the tunneling time $\tss$ holds
\begin{equation}
\label{eq:geometricrepresentation}
	\tss \, \ed \sum_{i=1}^{\mathcal{G}_M} \t^{(i)},
\end{equation}
and, in particular, $\E \tss = \frac{1}{p_M} \cdot \E \tsx = (M-1) \cdot \E \tsx$. 
\end{prop}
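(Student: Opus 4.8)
The plan is to exploit the permutation symmetry of the energy landscape $(\cX, H, q)$ under relabeling of the particle types, and then to decompose the trajectory from $\bs$ to $\bsp$ into independent excursions between consecutive visits to distinct stable configurations. First I would make the symmetry precise: for every permutation $\pi$ of $\{1,\dots,\mm\}$, define the site-wise map $\pi : \cX \to \cX$ by $(\pi\s)(v) := \pi(\s(v))$ with the convention $\pi(0)=0$. One checks directly that $\pi$ is a bijection of $\cX$ (the constraint in~\eqref{eq:wrconstraints} is manifestly preserved), that it leaves the Hamiltonian~\eqref{eq:energyfunction_WR} invariant because it maps occupied sites to occupied sites, and that it preserves the distance~\eqref{eq:distance_WR} and hence the connectivity~\eqref{eq:connectivityfunction_WR}. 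Consequently $P_\b(\pi\s,\pi\s')=P_\b(\s,\s')$, so $\pi$ is an automorphism of the Markov chain $\xtbb$, and moreover $\pi(\bs^{(m)})=\bs^{(\pi(m))}$.

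With this in hand, statements (a)--(c) follow by averaging over suitable subgroups. For (a) and (c) I would fix $\bs=\bs^{(m)}$ and use the permutations fixing $m$, which act transitively on $\ss\setminus\{\bs\}$: pushing the chain forward by such a $\pi$ leaves both its law and the hitting time $\tsx$ unchanged while relabeling the landing configuration, which forces the law of $X^\b_{\tsx}$ to be uniform on $\ss\setminus\{\bs\}$; since the very same invariance holds jointly with $\tsx$ held fixed, the conditional law of $X^\b_{\tsx}$ given $\tsx$ must coincide with its marginal, which is precisely independence. For (b) I would instead use any permutation sending $m$ to $m'$, which maps the chain started at $\bs^{(m)}$ to the chain started at $\bs^{(m')}$ and the target $\ss\setminus\{\bs^{(m)}\}$ to $\ss\setminus\{\bs^{(m')}\}$, so that $\t^{\bs^{(m)}}_{\ss\setminus\{\bs^{(m)}\}} \ed \t^{\bs^{(m')}}_{\ss\setminus\{\bs^{(m')}\}}$.

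For the representation~\eqref{eq:geometricrepresentation} I would introduce the embedded sequence of tunneling epochs. Set $T_0:=0$, $Y_0:=\bs$, and for $i\geq 1$ let $T_i$ be the first time after $T_{i-1}$ at which the chain hits $\ss\setminus\{Y_{i-1}\}$ and $Y_i:=X^\b_{T_i}$. By the strong Markov property at $T_{i-1}$ combined with (a)--(c), the holding times $\t^{(i)}:=T_i-T_{i-1}$ are i.i.d.\ with common law $\tsx$ and jointly independent of the jump chain $\{Y_i\}_{i\geq 0}$, while $\{Y_i\}$ is the random walk on $\ss$ that moves from any state to a uniformly chosen different state. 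The tunneling time is then $\tss=T_{\mathcal N}$ with $\mathcal N:=\min\{i\geq 1 : Y_i=\bsp\}$. Since before absorption the walk sits at a state other than $\bsp$ and from any such state jumps to $\bsp$ with probability exactly $p_\mm=(\mm-1)^{-1}$, the passage index $\mathcal N$ is geometric with parameter $p_\mm$; being a function of $\{Y_i\}$ alone, it is independent of $\{\t^{(i)}\}$. Combining these facts yields $\tss \ed \sum_{i=1}^{\mathcal{G}_\mm}\t^{(i)}$, and Wald's identity gives $\E\tss = p_\mm^{-1}\,\E\tsx = (\mm-1)\,\E\tsx$.

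The main obstacle I anticipate is the joint independence underlying the last step: one must argue not merely that each holding time is distributed as $\tsx$ and each jump is uniform, but that the \emph{entire} family of holding times is independent of the \emph{entire} jump chain, so that $\mathcal N$ (read off from the jumps) can be decoupled from the $\t^{(i)}$. This is exactly where part (c) is indispensable, and it has to be propagated through the strong Markov property at every epoch $T_{i-1}$, verifying that conditioning on the past and on $Y_{i-1}$ does not couple the next holding time to the next jump.
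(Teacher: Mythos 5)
Your proposal is correct and follows essentially the same route as the paper: exploit the type-relabeling symmetry of the energy landscape (the paper uses specific double-transposition automorphisms $\xi$ and a coupling argument, you use the full permutation action, but this is the same idea) to obtain (a)--(c), and then decompose the trajectory into excursions between successive visits to distinct stable configurations, whose number is geometric and independent of the i.i.d.\ excursion lengths, finishing with Wald's identity. If anything, your formalization of the excursion decomposition via the embedded chain $(T_i,Y_i)$ and the strong Markov property makes explicit the joint independence that the paper's ``non-consecutive visits'' counting treats somewhat informally.
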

\begin{proof}
Consider four particle types $x,y,w,z \in \{1,\dots,M\}$ such that $x \neq w$ and $y \neq z$, and the four corresponding stable configurations $\xx, \yy, \ww, \zz \in \ss$. 
Consider the following automorphism $\xi$ of the state space $\cX$ that associates to each WR configuration $\s$ a new WR configuration $\xi(\s)$ by replacing every particles of type $x$ ($y, w, z$) with a particle of type $y$ ($x, z, w$, respectively). More formally, for any $\s \in \cX$ and every $v \in \L$ we define
\[
	[\xi(\s)](v):=
	\begin{cases}
		x  &\text{ if } \s(v) = y,\\
		y &\text{ if } \s(v) = x,\\
		w &\text{ if } \s(v) = z,\\
		z &\text{ if } \s(v) =w,\\
		\s(v) &\text{ if } \s(v) \neq x,y,w,z.
	\end{cases}
\]
$\xi$ is indeed an automorphism of the state space diagram, seen as a graph with vertex set $\cX$ and such that any pair of states $\s,\s' \in \cX$ is connected by an edge if and only if $d(\s,\s') \leq 1$ (see definition~\eqref{eq:distance_WR}). By construction, the automorphism $\xi$ maps $\xx$ to $\yy$ and $\ww$ to $\zz$ simultaneously, \ie
\begin{equation}
\label{eq:automorphism_xywz}
	\xi(\xx)=\yy, \quad \xi(\ss \setminus \{\xx\})= \ss \setminus \{\yy\}, \quad \text{ and } \quad \xi(\ww)=\zz.
\end{equation}
Assume the Markov chain $\{X^\b_t\}_{t \in \N}$ starts in $\xx$ at time $0$. Let $\{ \widetilde{X}^\b_t \}_{t\in \N}$ be the Markov chain that mimics the moves of $X^\b_t$ via the automorphism $\xi$, \ie set $\widetilde{X}^\b_t:=\xi(X^\b_t)$ for every $t \in \N$. 

For notational compactness, we suppress in this proof the dependence on $\b$ of these two Markov chains. Since $\xi$ is an automorphism, for any pair of WR configurations $\s,\s' \in \cX$, any transition of $\widetilde{X}_t$ from $\h=\xi(\s)$ to $\h'=\xi(\s')$ is feasible and occurs with the same probability as the transition from $\s$ to $\s'$. We have defined in this way a coupling between $\{X_t\}_{t\in \N}$ and $\{\widetilde{X}_t\}_{t \in \N}$, which are two copies of the same Markov chain describing the Metropolis dynamics for the Widom-Rowlinson model on $\L$. In view of~\eqref{eq:automorphism_xywz}, this coupling immediately implies that the Markov chain $\{X_t\}_{t\in \N}$ started at $\xx$ hits a stable configuration in $\ss \setminus \{\xx\}$ precisely when the Markov chain $\{\widetilde{X}_t\}_{t \in \N}$ started at $\yy=\xi(\xx)$ hits a configuration in $\ss \setminus \{\yy\}$. Furthermore, for every $\xx,\yy,\ww,\zz \in \ss$ such that $\xx \neq \ww$ and $\yy \neq \zz$, the following identity holds:
\begin{equation}
\label{eq:jointdistribution}
	\pr{X_{\t^\xx_{\ss \setminus \{\xx\}}} =\ww , \, \t^\xx_{\ss \setminus \{\xx\}} \leq t} = \pr{\xi(X_{\t^\xx_{\ss \setminus \{\xx\}}})=\xi(\ww), \, \t^{\xi(\xx)}_{\xi(\ss \setminus \{\xx\})} \leq t}  = \pr{\widetilde{X}_{\t^\yy_{\ss \setminus \{\yy\}}}=\zz , \, \t^\yy_{\ss \setminus \{\yy\}} \leq t}.
\end{equation}
Taking $\xx=\yy$ and the limit $t \to \infty$ in~\eqref{eq:jointdistribution}, we obtain that for every $\ww,\zz \in \ss \setminus \{\xx\}$
\[
	\pr{X_{\t^\xx_{\ss \setminus \{\xx\}}} =\ww} = \pr{\widetilde{X}_{\t^\xx_{\ss \setminus \{\xx\}}}=\zz}.
\]
Using the fact that $\{X_t\}_{t\in \N}$ and $\{\widetilde{X}_t \}_{t\in \N}$ have the same statistical law, being two copies of the same Metropolis Markov chain, it then follows that the random variable $\smash{X_{\t^\xx_{\ss \setminus \{\xx\}}}}$ has a uniform distribution over $\ss \setminus \{\xx\}$, that is property (a). In particular, for any $\smash{\bs \in \ss \setminus \{\xx\}}$
\begin{equation}
\label{eq:uniformdistribution}
	\pr{X_{\t^\xx_{\ss \setminus \{\xx\}}} =\bs }=\frac{1}{|\ss \setminus \{\xx\}|} = \frac{1}{M-1}.
\end{equation}
By summing over $\ww \in \ss \setminus \{\xx\}$ in~\eqref{eq:jointdistribution}, we have that for every $\zz \in \ss \setminus \{\yy\}$ and every $t \geq 0$
\begin{equation}
\label{eq:partialidentity}
	\pr{\t^\xx_{\ss \setminus \{\xx\}} \leq t} = (M-1) \cdot \pr{X_{\t^\yy_{\ss \setminus \{\yy\}}}=\zz, \, \t^\yy_{\ss \setminus \{\yy\}} \leq t}.
\end{equation}
By summing over $\zz \in \ss \setminus \{\yy\}$ in~\eqref{eq:partialidentity}, we obtain that for every $t \geq 0$
\begin{equation}
\label{eq:equaldistribution}
	\pr{\t^\xx_{\ss \setminus \{\xx\}} \leq t} = \pr{\t^\yy_{\ss \setminus \{\yy\}} \leq t},
\end{equation}
proving property (b). Substituting~\eqref{eq:equaldistribution} into identity~\eqref{eq:partialidentity} and using~\eqref{eq:uniformdistribution}, we deduce that every $\yy,\zz \in \ss$ with $\yy\neq \zz$ and for every $t\geq 0$,
\[
	\pr{X_{\t^\yy_{\ss \setminus \{\yy\}}} = \zz, \, \t^\yy_{\ss \setminus \{\yy\}} \leq t} = \pr{X_{\t^\yy_{\ss \setminus \{\yy\}}}=\zz} \pr{\t^\yy_{\ss \setminus \{\yy\}} \leq t},
\]
that is property (c).

We now proceed with the derivation of the stochastic representation in~\eqref{eq:geometricrepresentation} for the tunneling time $\tss$. Let $\mathcal G_{M}$ be the random variable that counts the number of non-consecutive visits to stable configurations in $\ss \setminus \{\bsp\}$ until the configuration $\bb$ is hit, counting as first visit the configuration $\bs$ where the Markov chain starts. Non-consecutive visits here means that we count as actual visit to a stable configuration only the first one after the last visit to a different stable configuration. Property (b) implies that the random time between these non-consecutive visits does not depend on the last visited stable configuration. In view of property (a), the random variable $\mathcal G_{M}$ is geometrically distributed with success probability equal to $p_M:=(M-1)^{-1}$, \ie
\[
	\prin{\mathcal G_{M} = k} = \left ( 1- \frac{1}{M-1}\right )^{k-1} \cdot \frac{1}{M-1}, \quad k \geq 1.
\]
In particular, note that $\mathcal G_{M}$ depends only on $\mm$ and not on the inverse temperature $\b$. The amount of time $\tsx$ it takes for the Metropolis Markov chain started in $\bs \in \ss$ to hit any stable configuration in $\ss \setminus \{\bs\}$ does not depend on $\bs$, by virtue of property (b). In view of these considerations and using the independence property (c), we deduce~\eqref{eq:geometricrepresentation}. Lastly, the identity $\smash{\E \tss = (M-1) \cdot \E \tsx}$ immediately follows from Wald's identity, since both random variables $\mathcal G_{M}$ and $\smash{\tsx}$ have finite expectation and $\E \mathcal G_{M} = p_M^{-1}$. 
\end{proof}

We now conclude the proof of our main result using together the latter proposition and the already proved Theorem~\ref{thm:main}(iii).
\begin{proof}[Proof of Theorem~\ref{thm:main}(iv)]
Denote by $\LT_{\tss}(\cdot)$ and $\LT_{\tsx}(\cdot)$ the Laplace transforms of the hitting times $\tss$ and $\tsx$, respectively. The stochastic representation~\eqref{eq:geometricrepresentation} given in Proposition~\ref{prop:stochasticrepresentation} implies that
\[
	\LT_{\tss} = G_{\mathcal G_{M}}\left(\LT_{\tsx} (t)\right), \quad \forall \, t \geq 0.
\]
where $G_{\mathcal G_{M}}(z) :=\E (z^{\mathcal G_{M}})$, $z \in [0,1]$, is the probability generating function of the random variable $\mathcal G_{M}$. From Theorem~\ref{thm:main}(iii) it follows that for any stable configuration $\bs \in \ss$,
\[
	\LT_{\tsx /\E \tsx} (t) \stackrel{\binf}{\longrightarrow} \LT_Y(t), \quad \forall \, t \geq 0,
\]
where $Y$ is a unit-mean exponential random variable. Using the fact that $\E \tss  = \E \tsx  \cdot \E \mathcal G_{M}$, we obtain
\[
	\LT_{\tss / \E \tss } (t) = G_{\mathcal G_{M}} \left(\LT_{\tsx /\E \tsx}(t / \E \mathcal G_{M})\right) \stackrel{\binf}{\longrightarrow} G_{\mathcal G_{M}} \left(\LT_{Y}(t / \E \mathcal G_{M})\right), \quad \forall \, t \geq 0,
\]
and the continuity theorem for Laplace transforms yields that
\[
	\frac{\tss}{\E \tss} \cd \frac{1}{\E \mathcal G_{M}} \sum_{i=1}^{\mathcal G_{M}} Y^{(i)}, \quad \mathrm{ as } \, \, \binf,
\]
where $\{Y^{(i)}\}_{i \in \N}$ is a sequence of i.i.d.~exponential random variables with unit mean. The conclusion then follows by noticing that the geometric sum of i.i.d.~exponential random variables scaled by its mean is also exponentially distributed with unit mean. 
\end{proof}

\section{Geometrical features of WR configurations on square lattices}
\label{sec4}
In this section we derive some geometrical properties of WR configurations on finite square lattices, which are extensively used in Sections~\ref{sec5} and~\ref{sec6} to analyze the energy landscape corresponding to the Widom-Rowlinson model on the same graphs.

We first introduce some useful notation and results that are valid for every $K \times L$ square lattice $\L$ with $K,L \geq 2$, regardless of the imposed boundary conditions.
Denote by $c_j$, $j=0,\dots,L-1$, the $j$-th column of $\L$, \ie the collection of sites whose horizontal coordinate is equal to $j$, and by $r_i$, $i=0,\dots,K-1$, the $i$-th row of $\L$, \ie the collection of sites whose vertical coordinate is equal to $i$, see Figure~\ref{fig:stripesgridch7}. When we do not need to specify a precise row or column, we denote a generic column by $c$ and a generic row by $r$. We assign the coordinates $(j,i)$ to the vertex $v$ that lies at the intersection of column $c_j$ and row $r_i$. In addition, define the \textit{$i$-th horizontal stripe}, with $i=1,\dots,\lfloor K/2 \rfloor$, as 
\[
	S_i:=r_{2i-2} \cup r_{2i-1},
\]
and the \textit{$j$-th vertical stripe}, with $j=1,\dots,\lfloor L/2 \rfloor$ as
\[
	C_j:=c_{2j-2} \cup c_{2j-1},
\]
see Figure~\ref{fig:stripesgridch7} for an illustration.
\begin{figure}[!h]
	\centering
	\includegraphics[scale=0.95]{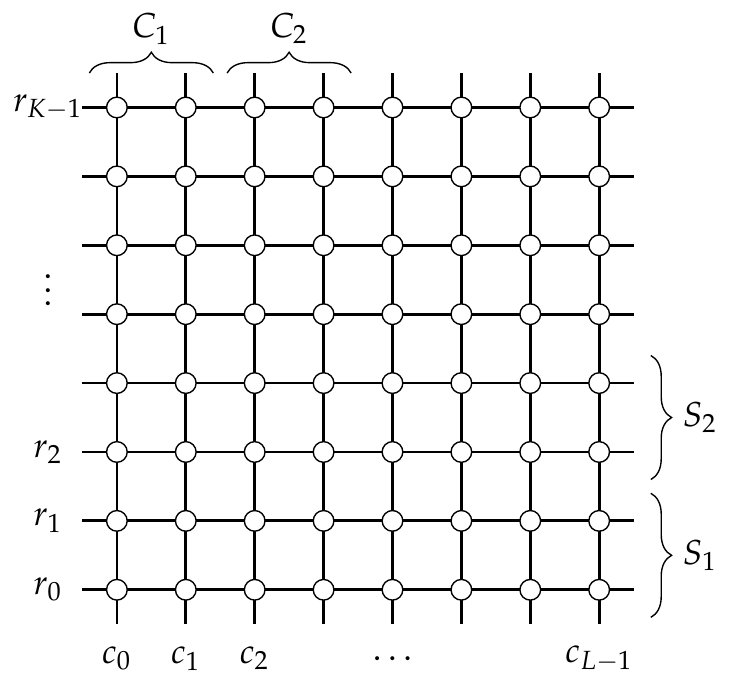}
	\caption{Illustration of row, column and stripe notation on square lattice $\L$ with periodic boundary conditions}
	\label{fig:stripesgridch7}
\end{figure}

\noindent Given an WR configuration $\s \in \cX$, we define $\D(\s)$ as the number of the empty sites that $\s$ has, namely
\begin{equation}
\label{eq:ineff}
	\D(\s) := \sum_{v \in \L} \mathds{1}_{\{\s(v) = 0\}} \stackrel{\eqref{eq:energyfunction_WR}}{=} N - \sum_{v \in \L} \mathds{1}_{\{\s(v)\neq 0\}}.
\end{equation}
Note that $\D(\s)$ can equivalently be seen as the \textit{energy difference} between $\s$ and any stable configuration $\aa \in \ss$, since $\D(\s) = H(\s)-H(\aa)$. We further define the energy difference of a configuration $\s \in \cX$ in row $r$ and in column $c$ respectively by 
\begin{equation}
\label{def:Ur}
	\D^{r}(\s):= L - \sum_{v \in r} \mathds{1}_{\{\s(v)\neq 0\}} = L - \sum_{v \in r} |\s(v)|, \quad \text{ and } \quad \D^{c}(\s):=  K - \sum_{v \in c} \mathds{1}_{\{\s(v)\neq 0\}} = K - \sum_{v \in c} |\s(v)|.
\end{equation}
Clearly the energy difference~\eqref{eq:ineff} of a configuration $\s$ can be written as the sum of the energy differences in every row (or in every column), \ie
\[
	\D(\s) = \sum_{i=0}^{K-1} \D^{r_i}(\s) = \sum_{j=0}^{L-1} \D^{c_j}(\s).
\]
We say that a WR configuration $\s\in \cX$ displays:
\begin{itemize}
	\item A \textit{vertical (horizontal) $m$-bridge} in column (row) if all sites of that column (row) are occupied by particles of type $m$;
	\item A \textit{vertical (horizontal) quasi $m$-bridge} in column (row) if all sites but one of that column (row) are occupied by particles of type $m$ and the remaining site is unoccupied;
	\item A \textit{cross} if it has both a vertical bridge and a horizontal bridge;
	\item A \textit{quasi-cross} if it has both a vertical quasi-bridge and a horizontal quasi-bridge.
\end{itemize}

We will interchangeably refer to bridges, quasi-bridges, and crosses using either the particle type that characterizes them or the color that identifies that particle type. Furthermore, we simply speak of bridges, quasi-bridges, and crosses if their color is not relevant. Some examples are given in Figures~\ref{fig:bridgesandcrossesWR}.

\begin{figure}[!ht]
	\centering
	\subfloat[A black horizontal bridge]{\includegraphics[scale=0.65]{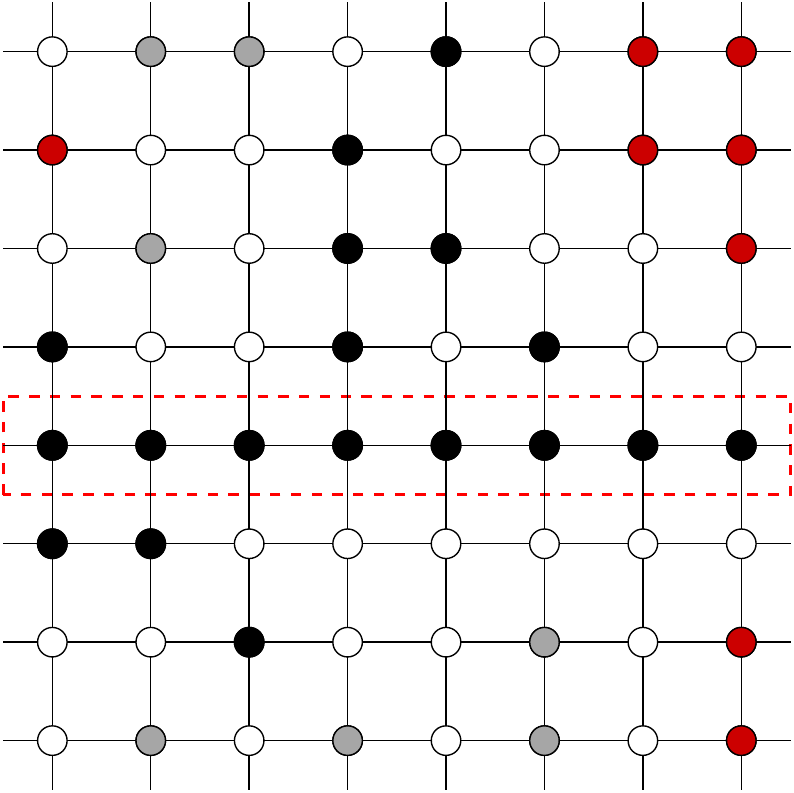}}
	\hspace{0.5cm}
	\subfloat[Two vertical bridges]{\includegraphics[scale=0.65]{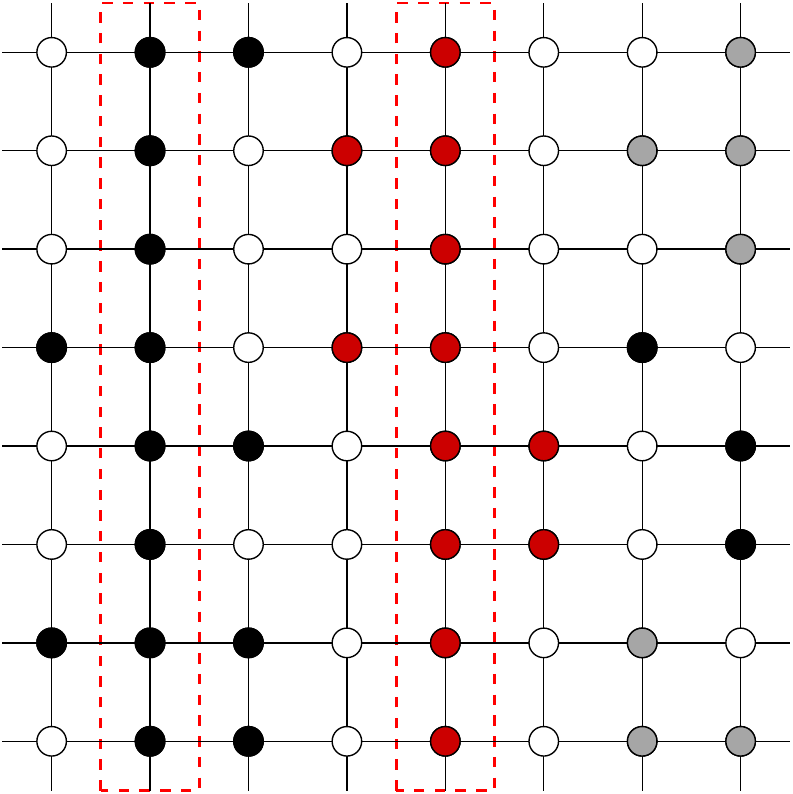}}
	\hspace{0.5cm}
	\subfloat[A black cross]{\includegraphics[scale=0.65]{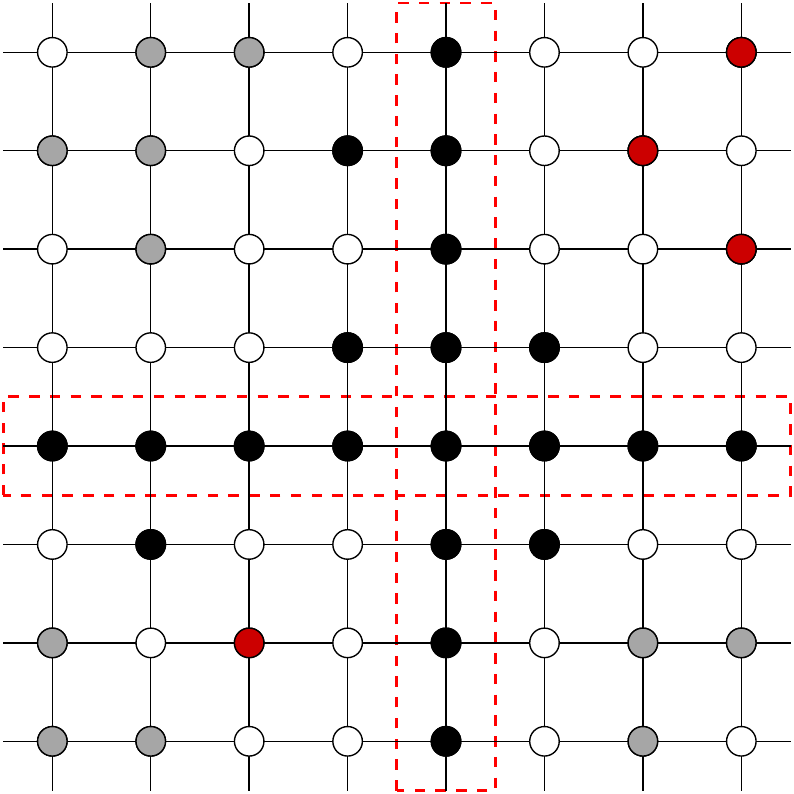}}
	\\
	\subfloat[A vertical gray quasi-bridge]{\includegraphics[scale=0.65]{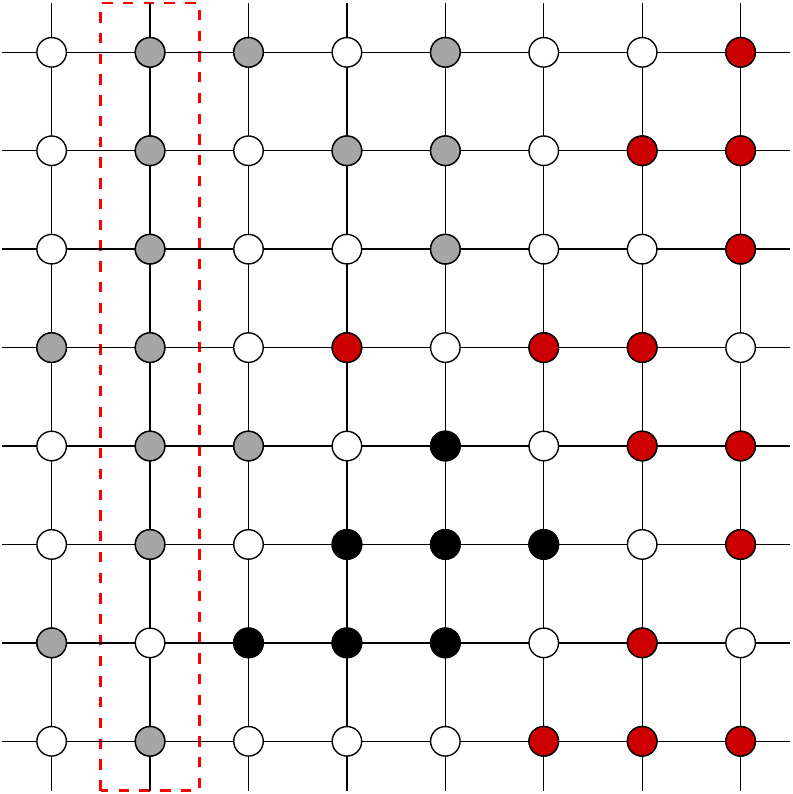}}
	\hspace{0.5cm}
	\subfloat[Two horizontal gray quasi-bridges]{\includegraphics[scale=0.65]{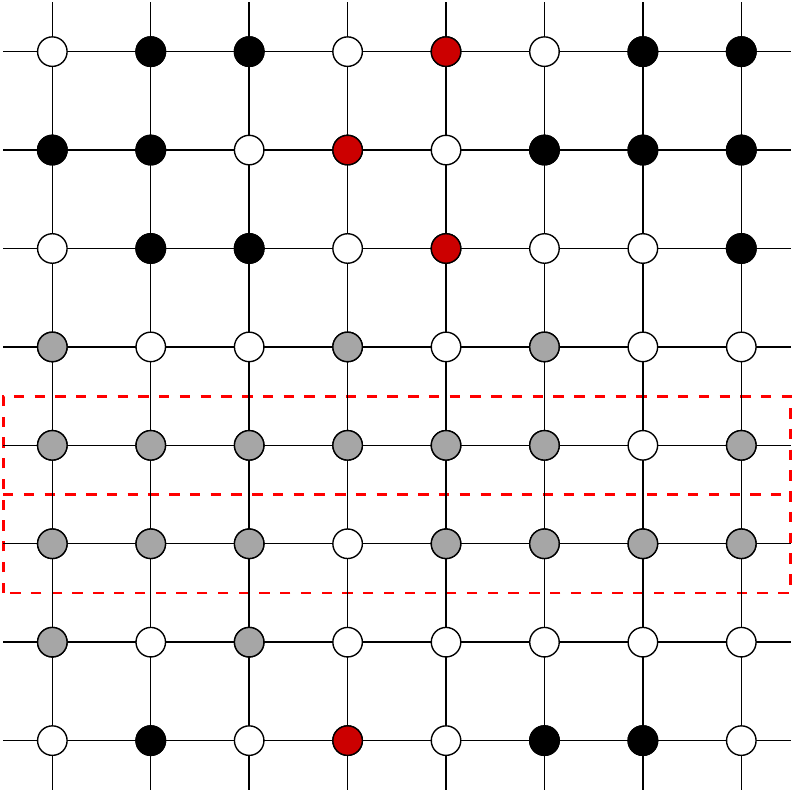}}
	\hspace{0.5cm}
	\subfloat[A gray quasi-cross]{\includegraphics[scale=0.65]{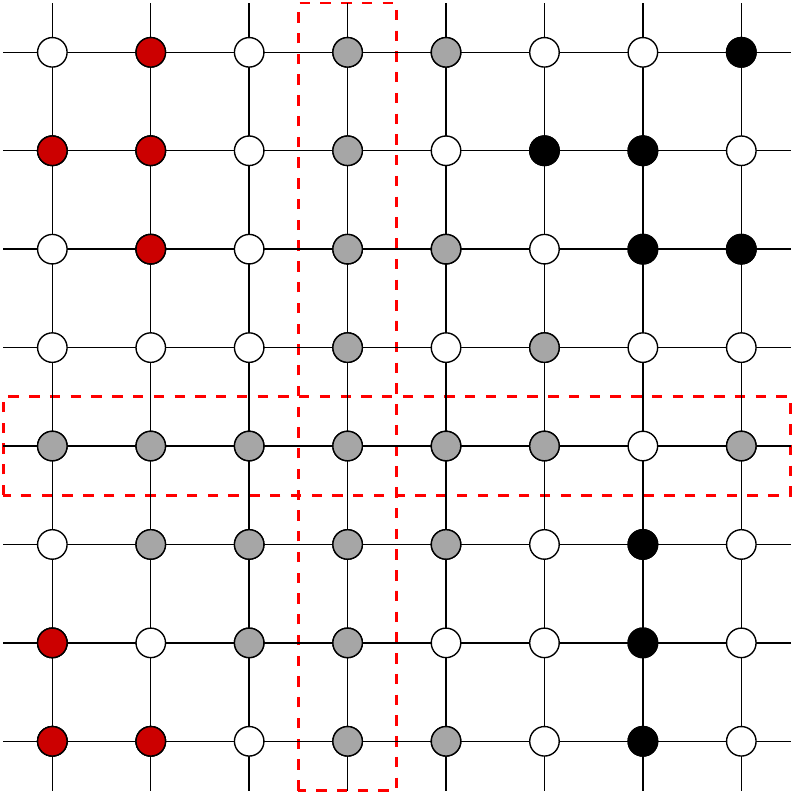}}
	\caption{WR configurations with $\mm=3$ types of particles displaying bridges and quasi-bridges on the $8 \times 8$ square lattice with periodic boundary conditions}
	\label{fig:bridgesandcrossesWR}
\end{figure}
\FloatBarrier

\begin{lem}[Geometrical features of bridges for WR configurations]\label{lem:impossible}
The following properties hold for a WR configuration on a square lattice $\L$:
\begin{itemize}
	\item[\textup{(i)}] Two bridges of different colors cannot be perpendicular to each other;
	\item[\textup{(ii)}] A bridge and quasi-bridge of different colors cannot be perpendicular to each other;
	\item[\textup{(iii)}] Two quasi-bridges of different colors and perpendicular to each other must meet in their empty site;
	\item[\textup{(iv)}] A (quasi-)bridge can have another (quasi-)bridge in adjacent row/column only if they are of the same color.
\end{itemize}
\end{lem}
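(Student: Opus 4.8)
The statement to prove is Lemma~\ref{lem:impossible}, a collection of four geometric facts about bridges and quasi-bridges. All four are local statements about what happens at the unique intersection site (or sites) of two perpendicular full/almost-full lines, so the plan is to exploit the hard-core constraint~\eqref{eq:wrconstraints} directly: any two sites joined by an edge must either host particles of the same type or have at least one empty site. I would handle the four items in order, since each builds intuition for the next, and the whole argument reduces to inspecting the site(s) where the two lines cross.

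\textbf{Items (i) and (ii).} Suppose a vertical bridge of color $m$ and a horizontal bridge of color $m' \neq m$ (for (i)), or a horizontal quasi-bridge of color $m'$ (for (ii)). On a $K \times L$ square lattice each column and each row intersect in exactly one site $v$, and this $v$ lies on both lines. For (i), the bridge condition forces $\s(v) = m$ from the vertical line and $\s(v) = m'$ from the horizontal line, contradicting $m \neq m'$. For (ii), the full vertical bridge still forces $\s(v) = m$; the horizontal quasi-bridge has all sites but one occupied by $m'$, so unless its unique empty site happens to be exactly $v$, we again get $\s(v) = m' \neq m$, a contradiction. The remaining subcase --- where the quasi-bridge's empty site \emph{is} $v$ --- is ruled out because $v$ lies on the full vertical $m$-bridge, so $\s(v) = m \neq 0$, i.e.\ $v$ cannot be empty. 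Hence no such configuration exists.

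\textbf{Item (iii).} Here both lines are quasi-bridges of different colors $m \neq m'$, meeting at the intersection site $v$. If $v$ were not the empty site of the vertical quasi-bridge, then $\s(v) = m$; symmetrically, if $v$ were not the empty site of the horizontal quasi-bridge, then $\s(v) = m'$. These cannot both hold with $m \neq m'$, and $\s(v)$ takes a single value, so at least one of the two quasi-bridges must have $v$ as its empty site. But if only one of them --- say the vertical --- is empty at $v$, then the horizontal quasi-bridge gives $\s(v) = m' \neq 0$, contradicting that $v$ is empty. Therefore $v$ must be the empty site of \emph{both} quasi-bridges; that is, they meet precisely in their common empty site, which is exactly the claim.

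\textbf{Item (iv).} This is the one case where the two lines are parallel rather than perpendicular, so there is no single intersection site; instead I would examine every edge joining the two adjacent rows (or columns). Take a (quasi-)bridge of color $m$ in some line and a (quasi-)bridge of color $m' \neq m$ in the adjacent parallel line. Each of the (say) $L$ sites of the first line is joined by an edge to the corresponding site directly beside it in the second line. Since at most one site per line is empty, among these $L$ aligned pairs at least $L - 2 \geq 0$ pairs have both endpoints occupied; because $K, L \geq 2$ we can guarantee at least one such fully occupied adjacent pair, which then carries colors $m$ and $m'$ across a single edge with neither site empty --- violating~\eqref{eq:wrconstraints}. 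The main thing to be careful about is counting the occupied pairs correctly and confirming that the two possible empty sites (one per line) cannot ``cover'' all the dangerous edges; this is where the $K, L \geq 2$ hypothesis and the fact that each line is missing at most one site do the work. I expect item (iv) to be the only mildly nontrivial step, precisely because it is a global count over an edge-set rather than a single-site inspection; the perpendicular cases (i)--(iii) are immediate once one names the intersection vertex.
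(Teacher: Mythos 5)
Your treatment of items (i)--(iii) is correct and is exactly the paper's argument: the paper disposes of the whole lemma in one sentence, handling (i)--(iii) by inspecting the constraint at the single site where the two perpendicular lines meet, which is precisely what you do (including the two-step argument in (iii) showing the intersection must be the empty site of \emph{both} quasi-bridges). For (iv) the paper likewise points at the edges joining the two adjacent rows/columns, so your overall route is the intended one.

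However, your item (iv) contains a genuine gap. Your count gives at least $L-2$ fully occupied aligned pairs, and you then assert that ``because $K,L\geq 2$ we can guarantee at least one such fully occupied adjacent pair.'' That assertion is false when the two parallel lines have length $2$: then $L-2=0$, and the two empty sites (one per line) can cover \emph{all} the dangerous edges by sitting in diagonally opposite positions. Concretely, on two adjacent rows of a lattice with $L=2$ the configuration
\[
	\begin{pmatrix} 0 & m \\ m' & 0 \end{pmatrix}, \qquad m \neq m',
\]
satisfies all the constraints in~\eqref{eq:wrconstraints}, and both rows are quasi-bridges of different colors; so no refinement of the counting can close this case --- the quasi-bridge/quasi-bridge part of statement (iv) genuinely fails for length-$2$ lines, and your argument is only valid for lines of length at least $3$. (When at least one of the two lines is a full bridge there is at most one empty site in play, hence at least $L-1\geq 1$ fully occupied pairs, and your argument does go through for every $L\geq 2$.) To be fair, the paper's own one-sentence proof glosses over the same degenerate case; but since you explicitly identified $K,L\geq 2$ as the hypothesis that ``does the work,'' the step as you wrote it is wrong, and a correct write-up must either carry out the count only for lines of length $\geq 3$ or explicitly flag the length-$2$ exception in the quasi/quasi case.
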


The proof of Lemma~\ref{lem:impossible} is immediate by looking at the hard-core constraints between unlike particles arising at the site in which the two bridges or quasi-bridges meet for statements (i)-(iii) and at the neighboring sites belonging to adjacent rows/columns for statement (iv), see for instance Figure~\ref{fig:bridgesandcrossesWR}.\\

In view of Lemma~\ref{lem:impossible}(i), crosses consist of particles of a single type, but this does not have to be the case for quasi-crosses and motivates the following definition. We call a quasi-cross \textit{monochromatic} if it consists of particles of the same type, and \textit{bichromatic} otherwise, see some examples in Figure~\ref{fig:quasicrosses}. Lemma~\ref{lem:impossible}(iii) implies that the two quasi-bridges of a bichromatic quasi-cross must intersect in their unique empty site.  

\begin{figure}[!h]
	\centering
	\subfloat[A monochromatic (black) quasi-cross]{\includegraphics[scale=0.65]{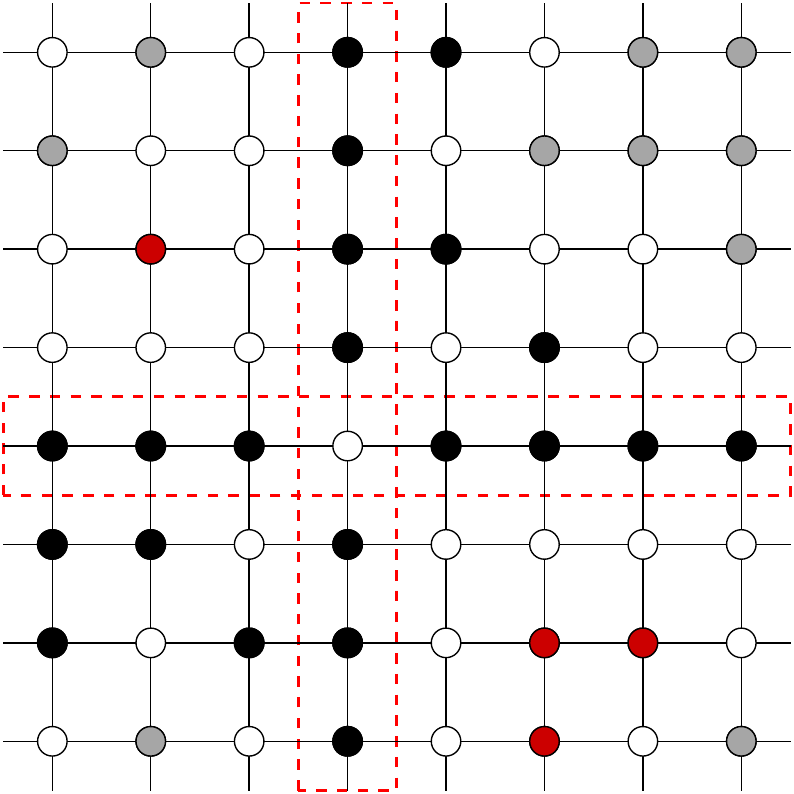}}
	\hspace{0.5cm}
	\subfloat[A monochromatic (gray) quasi-cross]{\includegraphics[scale=0.65]{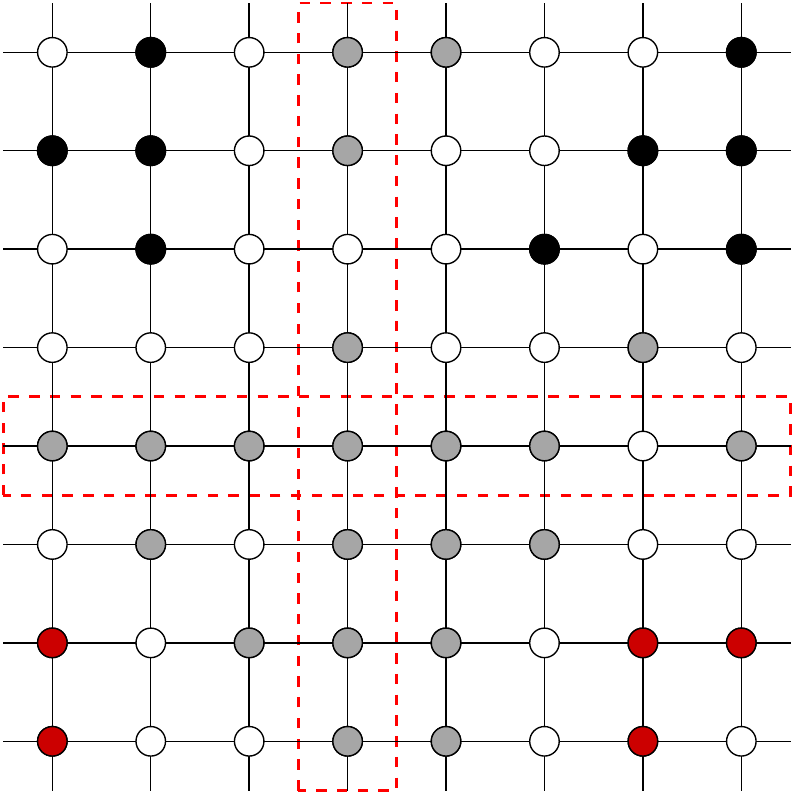}}
	\hspace{0.5cm}
	\subfloat[A bichromatic quasi-cross]{\includegraphics[scale=0.65]{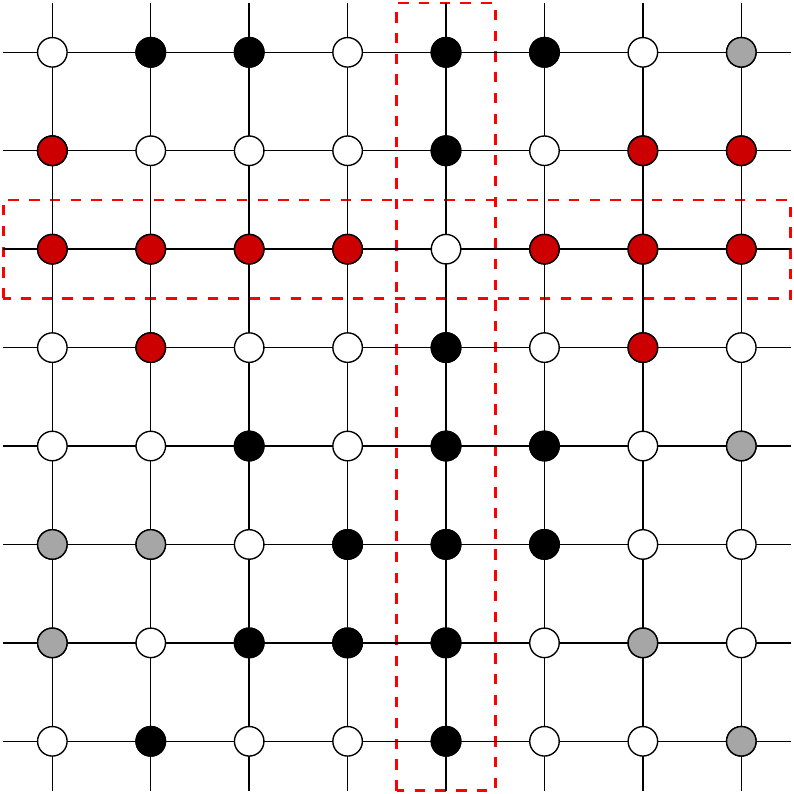}}
	\caption{Examples of WR configurations with $\mm=3$ types of particles displaying quasi-crosses on the $8 \times 8$ square lattice with periodic boundary conditions}
	\label{fig:quasicrosses}
\end{figure}
\FloatBarrier
The next two lemmas show that bridges are the unique particle displacements with zero energy difference in a given row/column, regardless of the chosen boundary conditions. In addition, only in the case of a square lattice $\L$ with periodic boundary conditions, we give an equivalent characterization of quasi-bridges. Lemmas~\ref{lem:bqbi} and~\ref{lem:bqbso_opengrid} are stated and proved only for rows and horizontal (quasi-)bridges, since those for columns and vertical (quasi-)bridges are analogous.

\begin{lem}[Bridges and quasi-bridges characterization on square lattices with periodic boundary conditions]\label{lem:bqbi} $ $\\
Let $\s \in \ss$ be a WR configuration on a square lattice $\L$ with periodic boundary conditions. Then,
\begin{itemize}
	\item[\textup{(i)}] $\D^{r}(\s) =0 \quad \Longleftrightarrow \quad \s$ has a horizontal bridge in row $r$;
	\item[\textup{(ii)}] $\D^{r}(\s) =1 \quad \Longleftrightarrow \quad \s$ has a horizontal quasi-bridge in row $r$.
\end{itemize}
In particular, if $\s$ has no bridges nor quasi-bridges in row $r$, then $\D^{r}(\s) \geq 2$.
\end{lem}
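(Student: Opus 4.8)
The plan is to exploit the fact that, under periodic boundary conditions, the $L$ sites of a fixed row $r$, together with the horizontal edges joining consecutive sites, form a cycle of length $L$. The reverse implications in both (i) and (ii) are immediate from the definitions of bridge and quasi-bridge, so I would put all the work into the two forward implications, both of which rest on a single observation: by the WR constraint~\eqref{eq:wrconstraints}, any two horizontally adjacent sites that are both occupied must carry particles of the same type.

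For the forward direction of (i), I would note that $\D^r(\s)=0$ means, by~\eqref{def:Ur}, that all $L$ sites of row $r$ are occupied; traversing the cycle, each consecutive pair is occupied and hence of a common type, so connectedness of the cycle forces all $L$ sites to share one type $m$, which is exactly a horizontal $m$-bridge. For the forward direction of (ii), I would observe that $\D^r(\s)=1$ leaves exactly one empty site $v_0$ in row $r$ and $L-1$ occupied ones; deleting $v_0$ turns the cycle into a path on the occupied sites, which remains connected, and the same adjacent-equal-type argument along this path forces a single type $m$, i.e.\ a horizontal quasi $m$-bridge.

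The ``in particular'' clause is then purely logical: by the two equivalences, the absence of a bridge gives $\D^r(\s)\neq 0$ and the absence of a quasi-bridge gives $\D^r(\s)\neq 1$, and since $\D^r(\s)$ is a nonnegative integer it must be at least $2$. The one genuinely delicate point — and the step I expect to be the crux — is the role of the periodic boundary conditions in part (ii): it is precisely because removing a single vertex from a cycle leaves a \emph{connected} path that all occupied sites are forced to share one color. Under open boundary conditions a row is already a path, and deleting an interior site splits it into two independent segments that may carry different colors; this is exactly why the quasi-bridge characterization is special to periodic boundaries and must be treated separately, as in Lemma~\ref{lem:bqbso_opengrid}.
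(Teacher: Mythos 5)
Your proof is correct and follows essentially the same route as the paper: the paper argues that, due to the periodic boundary conditions, particles of different types in one row would require at least two empty sites separating the clusters, so $\D^r(\s)\leq 1$ forces a single type, with the particle count then determined by $\D^r(\s)$ — which is exactly your cycle-connectivity argument stated in contrapositive form. Your explicit remark that deleting one vertex from a cycle leaves a connected path (and that this fails for open boundaries) is the same mechanism the paper relies on, just spelled out in slightly more graph-theoretic detail.
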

\begin{proof}
The implications (i)$\Leftarrow$ and (ii)$\Leftarrow$ are immediate by definition of (quasi-)bridge and~\eqref{def:Ur}. For the converse implications, is enough to observe that, in order to have particles of different types in the same row $r$, there should be at least two empty sites separating the two (or more) clusters of alike particles from each other, due to the periodic boundary conditions. If $\D^r(\s) \leq 1$, then all the particles residing in row $r$ are of the same type and their number is automatically determined by the value of the energy difference $\D^{r}(\s)$.
\end{proof}

\begin{lem}[Bridges characterization on square lattices with open boundary conditions]\label{lem:bqbso_opengrid} Let $\s \in \ss$ be a WR configuration on a square lattice $\L$ with open boundary conditions. Then, 
\[
	\D^r(\s)=0 \quad \Longleftrightarrow \quad  \s \text{ has a horizontal bridge in row } r.
\]
\end{lem}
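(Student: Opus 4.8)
The plan is to prove the two implications separately. The reverse implication is immediate from the definitions, and the whole content of the forward implication lies in a single structural observation: under open boundary conditions each row is an induced \emph{path}, not a cycle. For the easy direction ($\Leftarrow$), if $\s$ has a horizontal bridge in row $r$ then every one of the $L$ sites of $r$ is occupied, so $\sum_{v \in r}\mathds{1}_{\{\s(v)\neq 0\}}=L$ and hence $\D^r(\s)=0$ directly from the definition~\eqref{def:Ur}.

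For the converse ($\Rightarrow$), I would start from $\D^r(\s)=0$, which by~\eqref{def:Ur} forces $\sum_{v \in r}\mathds{1}_{\{\s(v)\neq 0\}}=L$, that is, all $L$ sites of row $r$ are occupied. The key step is then to show that these occupied sites all carry the same particle type. Since the boundary conditions are open, row $r$ is an induced path on the sites $v_0,v_1,\dots,v_{L-1}$, in which $v_j$ and $v_{j+1}$ are adjacent for each $j$. As both endpoints of every such edge are occupied we have $\s(v_j)\s(v_{j+1})\neq 0$, so the WR admissibility constraint in~\eqref{eq:wrconstraints} leaves only the alternative $\s(v_j)=\s(v_{j+1})$. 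Propagating this equality along the connected path yields $\s(v_0)=\s(v_1)=\cdots=\s(v_{L-1})=:m$, so row $r$ is a horizontal $m$-bridge, as claimed.

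I expect no real obstacle here; the argument is short precisely because the wrap-around edge present in the periodic case of Lemma~\ref{lem:bqbi} is absent. On a full row of an open lattice two distinct color clusters cannot coexist, since separating unlike particles would require at least one empty separating site, which is excluded once $\D^r(\s)=0$. The same observation explains why the quasi-bridge characterization of Lemma~\ref{lem:bqbi}(ii) has no analogue under open boundary conditions: a single empty site in an otherwise full row can legitimately separate two clusters of different colors, so $\D^r(\s)=1$ no longer forces the presence of a quasi-bridge, and only statement (i) survives.
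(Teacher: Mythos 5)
Your proof is correct and rests on the same essential fact as the paper's: since no site of the row is empty, the hard-core constraint in~\eqref{eq:wrconstraints} forces adjacent particles to share a type, so the whole row is monochromatic. The only difference is presentational — you propagate the type equality directly along the row, while the paper argues by contradiction (splitting on whether the row has one or several particle types) — and your closing remark about why $\D^r(\s)=1$ fails to force a quasi-bridge under open boundary conditions is a correct observation, though not needed for the lemma.
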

\begin{proof}
The implication $\Leftarrow$ follows immediate from the definitions of bridge and energy difference on a row~\eqref{def:Ur}. Assume by contradiction that $\s$ does not have a horizontal bridge in row $r$. If $\s$ has only particles of one type in row $r$ and does not display a bridge there, then the number of particle in row $r$ must be strictly less than $L$, so by definition of energy difference $\D^r(\s) >0$, which is a contradiction. If instead $\s$ has particles of two or more types in row $r$, there has to be at least one empty site separating the clusters created by particles of the same type, and thus $\D^r(\s) \geq 1$, which is again a contradiction.
\end{proof}
In the next two sections these geometrical features of WR configurations on square lattices will be leveraged to prove Theorem~\ref{thm:structuralpropertiesWR}. Even if the underlying ideas are similar, the two possible boundary conditions for the square lattice $\L$ require different combinatorial arguments and, for this reason, we present them separately, first the case of periodic boundary conditions in Section~\ref{sec5} and then that of open boundary conditions in Section~\ref{sec6}.

\newpage
\section{Proofs for square lattices with periodic boundary conditions}
\label{sec5}
This section is devoted to the analysis of the energy landscape of the Widom-Rowlison model on $K \times L$ square lattice $\L$ with periodic boundary conditions, which leads to the proof of the corresponding statements in Theorem~\ref{thm:structuralpropertiesWR}.

The section is organized as follows. First, we present a lower bound for the communication height between any pair $\bs, \bsp$ of stable configurations, see Proposition~\ref{prop:lowertg_WR}. We then introduce a \textit{reduction algorithm}, which is then used in Proposition~\ref{prop:refpathwrt} to build a reference path $\o^*: \bs \to \bsp$. The existence of such a path shows that the lower bound given in Proposition~\ref{prop:lowertg_WR} is sharp and concludes the proof of Theorem~\ref{thm:structuralpropertiesWR}(i). We then use the reduction algorithm to construct a path with prescribed height from every WR configuration $\s \not\in \ss $ to one of the $\mm$ stable configurations proving in this way Theorem~\ref{thm:structuralpropertiesWR}(ii).

\begin{prop}[Lower bound for $\Phi(\aa,\bb)$]\label{prop:lowertg_WR}
Consider the Widom-Rowlison model on the $K \times L$ square lattice $\L$ with periodic boundary conditions with $(K,L) \neq (2,2), (2,3)$. The communication height between any pair of stable configurations $\aa, \bb \in \ss$ in the corresponding energy landscape satisfies
\begin{equation}
\label{eq:lowertg_WR}
	\Phi(\aa,\bb) - H(\aa) \geq
	\begin{cases}
		2K 							& \text{if } K=L,\\
		\min\{2K,2L\}+1 	& \text{if } K\neq L.
	\end{cases}
\end{equation}
\end{prop}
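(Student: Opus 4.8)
The plan is to rephrase the statement in terms of the number of empty sites. Since $\D(\s)=H(\s)-H(\aa)$ for every $\s\in\cX$ and $\aa\in\ss$ (see~\eqref{eq:ineff}), the communication height rewrites as $\Phi(\aa,\bb)-H(\aa)=\min_{\o:\aa\to\bb}\max_{\s\in\o}\D(\s)$, so it suffices to show that every path $\o:\aa\to\bb$ visits a configuration $\s$ with at least as many empty sites as the right-hand side of~\eqref{eq:lowertg_WR}. Write $a,b$ for the types of $\aa,\bb$, so that these are the all-$a$ and all-$b$ configurations. The basic mechanism I would exploit is that particles of type $a$ must be insulated by empty sites: if $A=\{v:\s(v)=a\}$, then any neighbour of a site of $A$ is either in $A$ or empty, since a particle of a different type there would violate the hard-core constraint. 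Hence the whole vertex boundary $\partial A$ consists of empty sites, and $\D(\s)\geq|\partial A|$.

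Next I would locate a configuration where both $A$ and its complement are large. The number $N_a(\s)$ of type-$a$ particles equals $N$ at $\aa$, equals $0$ at $\bb$, and changes by at most one at each admissible single-site update, so there is a configuration $\s^\star$ on the path with $N_a(\s^\star)=\lceil N/2\rceil$, i.e. $|A|\approx N/2$. For such a half-filling set I would prove, by a direct count using the decomposition $\D=\sum_i\D^{r_i}=\sum_j\D^{c_j}$, the torus isoperimetric estimate $|\partial A|\geq 2\min\{K,L\}$. The clean case is illuminated by a ``two-bridge'' observation: if $\s$ carries vertical bridges of both colours, then every row contains an $a$- and a $b$-particle, so by Lemma~\ref{lem:bqbi} every row has $\D^{r}\geq 2$ and thus $\D(\s)\geq 2K$ (symmetrically $\D\geq 2L$ for two horizontal bridges, while Lemma~\ref{lem:impossible}(i) forbids perpendicular bridges of different colours). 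Combining $\D(\s^\star)\geq|\partial A|\geq 2\min\{K,L\}$ already gives $\Phi(\aa,\bb)-H(\aa)\geq\min\{2K,2L\}$, which is exactly the asserted bound $2K$ when $K=L$.

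The delicate part, and the reason the two cases of~\eqref{eq:lowertg_WR} differ, is the extra unit when $K\neq L$; assume $K<L$, so $\min\{2K,2L\}=2K$. If the path ever exhibits horizontal bridges of both colours we are done, since then $\D\geq 2L\geq 2K+2$. Otherwise I would argue that the value $\D(\s^\star)=2K$ cannot actually be sustained: by the equality case of the isoperimetric estimate such a $\s^\star$ must be a \emph{straight} vertical band, a contiguous block of full $a$-columns flanked by two empty columns. On a straight band every empty site sits between the type-$a$ block and the complementary block, hence is adjacent both to an $a$-particle and to a particle of a different type, so it cannot be filled; the only admissible updates out of $\s^\star$ are removals, each raising $\D$ to $2K+1$. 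Since the path must enter and leave $\s^\star$, some neighbouring configuration already has $\D\geq 2K+1$.

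The step I expect to be the main obstacle is precisely this equality analysis, which rests on a topological dichotomy for the discrete torus: a configuration separating $A$ from the remaining particles corresponds to a pair of non-contractible empty cuts, and the shortest non-contractible cycles have length $\min\{K,L\}$. When $K\neq L$ these shortest cuts are only the \emph{straight} rows/columns, which forces the ``stuck'' band configurations above and hence the $+1$; when $K=L$ there are in addition \emph{diagonal} shortest cycles, and the corresponding staircase bands admit an energy-preserving corner move that slides the interface at constant $\D=2K$, which is exactly why the square barrier stays at $2K$ and no extra unit appears. Turning this picture into a rigorous classification of the half-filling configurations attaining $|\partial A|=2\min\{K,L\}$, together with the routine bookkeeping for the parity of $N$ and for the small lattices $(2,2),(2,3)$ excluded in the statement, is where the real combinatorial work lies.
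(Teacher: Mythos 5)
Your setup is fine as far as it goes: $\D(\s)\geq|\partial A|$ for $A=\{v:\s(v)=a\}$ is correct, and every path does contain a configuration $\s^\star$ with $N_a(\s^\star)=\lceil N/2\rceil$. The proof collapses, however, at its central claim: it is \emph{not} true that $|A|=\lceil N/2\rceil$ forces $|\partial A|\geq 2\min\{K,L\}$ on the torus. Take $K=L=4$, a case covered by the proposition, and let $A$ consist of the four sites of column $c_0$ together with the sites in rows $r_0,r_1$ of the two adjacent columns $c_1$ and $c_3$; then $|A|=8=N/2$, but $\partial A$ has only $6<8=2K$ sites, because the two remaining sites of column $c_2$ (rows $r_2,r_3$) have all four of their torus neighbours outside $A$. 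Placing type-$a$ particles on $A$, type-$b$ particles on those two sites, and leaving the six boundary sites empty gives an admissible WR configuration at exact half-filling with $\D=6$. This is not a small-lattice pathology: on every $K\times K$ torus with $K$ even, the $\ell^1$-ball of radius $K/2-1$ around a site, completed by the $K-1$ sphere sites of radius $K/2$ lying in one coordinate half-plane, has exactly $K^2/2$ sites and vertex boundary $2K-2$, again strictly below $2K$. So the isoperimetric inequality you plan to prove ``by a direct count'' is false; with it falls the case $K=L$ of your argument, and also the equality classification behind your ``$+1$'' step for $K\neq L$ (straight bands are \emph{not} the only near-minimizers: these wrapped diamonds are others, and they are not ``stuck''). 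Your two-bridge observation is correct but only applies to configurations that actually display parallel bridges of both colours, which the configurations above do not.

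The failure is structural rather than a fixable technicality. On the $4\times 4$ torus one can exhibit, for \emph{every} volume $0<k<16$, a $k$-site set whose vertex boundary is at most $7$ (plus-shaped pentominoes, wrapped squares and diamonds, and complements thereof), so no bound of the form ``at the moment when $N_a$ equals a prescribed value, $\D$ is at least the isoperimetric minimum at that volume'' can ever reach the true barrier $2K=8$: the static geometry of the $a$-region at a single instant does not see the barrier at all. This is precisely the difficulty the paper's proof is built around. Instead of stopping the path at a prescribed particle count, it stops at the \emph{first} configuration $\o_{n^*}$ displaying a black bridge or a black quasi-cross, and shows that $\max\{\D(\o_{n^*-1}),\D(\o_{n^*-2})\}\geq\min\{2K+1,2L\}$, using Lemma~\ref{lem:bqbi} (on the torus, a row $r$ with $\D^{r}(\s)\leq 1$ must carry a horizontal bridge or quasi-bridge) together with the incompatibility constraints of Lemma~\ref{lem:impossible}. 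Immediately before the first black bridge or quasi-cross is completed, every row (or every column) is forced to contain at least two empty sites -- a statement that is simply false at a generic moment of the path, as the configurations above demonstrate. To salvage your approach you would have to replace the half-filling stopping time by a stopping rule of this dynamical kind; the isoperimetric route on its own cannot be completed.
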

\begin{proof}
Modulo relabeling particle types, we can assume without loss of generality that $\bs=\bs^{(1)}$ and $\bsp=\bs^{(2)}$ and associate the color gray to particle of type $1$ and the color black to those of type $2$. Let $\hat{\xi}: \cX \to \cX$ be the function that maps each configuration $\s \in \cX$ into the configuration $\hat{\xi}(\s)$ with the same empty sites and where all the particles that are not black nor gray are replaced by gray particles, \ie for every $v \in \L$ set
\begin{equation}
\label{eq:projection}
	[\hat{\xi}(\s)](v):=
	\begin{cases}
		\s(v) & \text{ if } \s(v) \in \{0,1,2\},\\
		1 & \text{ if } \s(v) \in \{3, \dots, M\}.
	\end{cases}
\end{equation}
The resulting configuration is clearly a WR configuration, \ie $\hat{\xi}(\s) \in \cX$, and has the same energy as the original one, $H(\s)=H(\hat{\xi}(\s))$, since they have the same number of particles. Hence, every path $\o: \bs \to \bsp$ of length $n$ is mapped by $\hat{\xi}$ to a new path $\o':=\hat{\xi}(\o)$ of the same length from $\bs$ to $\bsp$ and whose energy profile is identical to that of the original path, \ie
\[
	H(\o'_i) = H(\hat{\xi}(\o_i)) = H(\o_i) , \quad \forall \, i =1,\dots,n,
\]
and, in particular, $\Phi_{\o'}=\Phi_\o$. This observation allow us to work for the purpose of this proof as if $\mm=2$.\\

Assuming without loss of generality that $K \leq L$, we need to show that in every path $\o: \aa \to \bb$ there is at least one configuration with energy difference greater than or equal to $\min\{2K+1,2L\}$. Take a path $\o = (\o_1,\dots, \o_n)$ from $\aa$ to $\bb$. Without loss of generality, we may assume that there are no void moves in $\o$, \ie at every step a particle is either added or removed, so that $H(\o_{i+1}) = H(\o_{i}) \pm 1$ for every $1 \leq i \leq n-1$. 

If two WR configurations $\s,\s' \in \cX$ with $d(\s,\s')=1$ (see definition~\eqref{eq:distance_WR}) are such that $\s$ does not display a black bridge in a certain row/column and $\s'$ instead does, then $\s$ must have a quasi-bridge in that row/column. Moreover in this case $H(\s')=H(\s)-1$, since the bridge is created by adding a black particle in the only empty site of that row/column.

Since $\aa$ has no black bridges, while $\bb$ does, at some point along the path $\o$ of length $n$ there must be an index $n^*\leq n$ such that configuration $\o_{n^*}$ that is \textit{the first} to display a black bridge (horizontal or vertical) or a black quasi-cross. Clearly $n^*>2$. We claim that 
\[
	\max \{\D(\o_{n^*-1}), \D(\o_{n^*-2})\} \geq \min\{2K+1,2L\}.
\]
We distinguish the following three cases:
\begin{itemize}
	\item[(a)] $\o_{n^*}$ displays a black vertical bridge only;
	\item[(b)] $\o_{n^*}$ displays a black horizontal bridge only;
	\item[(c)] $\o_{n^*}$ displays a black quasi-cross.
\end{itemize}
Note that in case (c) we do not exclude the possibility that a black quasi-cross is created simultaneously with a black bridge.\\

For case (a), let $c^*$ be the column where $\o_{n^*}$ displays the black vertical bridge. The previous configuration $\o_{n^*-1}$ along the path $\o$ differs from $\o_{n^*}$ in exactly one site, say $v' \in c^*$. By construction, $\o_{n^*-1}(v')=0$ and $\o_{n^*-1}$ has a black vertical quasi-bridge in column $c^*$. In any row the configuration $\o_{n^*-1}$ cannot display
\begin{itemize}
	\item a horizontal black bridge or quasi-bridge, since otherwise the definition of $n^*$ would be violated: Indeed, $\o_{n^*-1}$ would have respectively a black bridge or black quasi-cross (having a black quasi-bridge in column $c^*$);
	\item a horizontal gray bridge, which cannot coexist with the black vertical quasi-bridge, in view of Lemma~\ref{lem:impossible}(ii);
	\item a horizontal gray quasi-bridge, since the black bridge in column $c^*$ could not be created with a single-site update, as illustrated in Figure~\ref{fig:2sne}.
\end{itemize}
\begin{figure}[!h]
	\centering
	\includegraphics[scale=0.95]{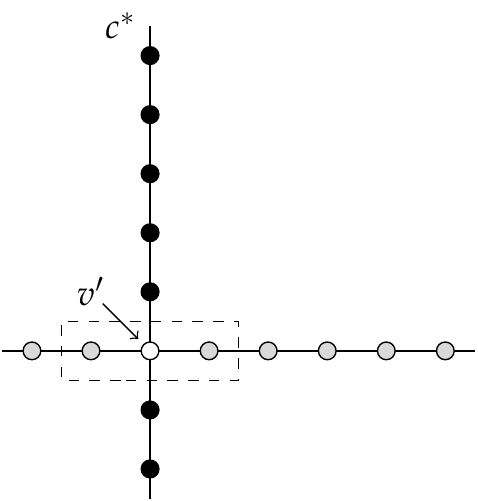}
	\caption{Three single-site updates are needed to create a black bridge in column $c^*$, since all the three sites in the dashed box must be updated}
	\label{fig:2sne}
\end{figure}
Therefore, by Lemma~\ref{lem:bqbi} $\D^{r_i}(\o_{n^*-1}) \geq 2$ for every $i=0,\dots,K-1$ and hence
\begin{equation}
\label{eq:U2K}
	\D(\o_{n^*-1}) = \sum_{i=0}^{K-1} \D^{r_i}(\o_{n^*-1}) \geq 2K.
\end{equation}
If $\D(\o_{n^*-1})\geq 2K+1$, then the proof is complete. Suppose instead that $\D(\o_{n^*-1})=2K$ and consider the configuration $\o_{n^*-2}$ preceding $\o_{n^*-1}$ in the path $\o$. By construction, $\o_{n^*-2}$ differs from $\o_{n^*-1}$ by a single-site update, and thus
\begin{equation}
\label{eq:pm1a}
	\D(\o_{n^*-2})=\D(\o_{n^*-1}) \pm1.
\end{equation}
Suppose first that $\D(\o_{n^*-2})=\D(\o_{n^*-1})-1 = 2K-1$, which means that configuration $\o_{n^*-2}$ has exactly one more particle than configuration $\o_{n^*-1}$. The site where such a particle is added cannot be $v'$, otherwise the definition of $n^*$ would be violated. All the other sites in column $c^*$ are already occupied, hence $\o_{n^*-2}$ is identical to $\o_{n^*-1}$ in column $c^*$. In particular, $\o_{n^*-2}$ has a black quasi-bridge in column $c^*$ as well. The configuration $\o_{n^*-2}$ cannot have any horizontal bridge, since the existence of a black bridge would contradict the definition of $n^*$ and that of a gray bridge is impossible by Lemma~\ref{lem:impossible}(ii). Since $\D(\o_{n^*-2})=2K-1$, by the pigeonhole principle there exists then at least one row, say $r'$, with $\D^{r'}(\o_{n^*-2})=1$, which means that $\o_{n^*-2}$ has a horizontal quasi-bridge in row $r'$. Such a horizontal quasi-bridge cannot be black, otherwise $\o_{n^*-2}$ would have a quasi-cross, violating the definition of $n^*$. By Lemma~\ref{lem:impossible}(iii) and the presence of a black quasi-bridge in column $c^*$, a gray quasi-bridge could exist only in the row containing site $v'$. However, in this case it would then be impossible to obtain a black vertical bridge in only two single-site updates, since the minimum number of steps required is three, as illustrated in Figure~\ref{fig:2sne}. Therefore, it is not possible that $\D(\o_{n^*-2})=2K-1$ and, combining~\eqref{eq:U2K} and~\eqref{eq:pm1a}, we deduce that
\[
	\D(\o_{n^*-2})=\D(\o_{n^*-1})+1 = 2K +1,
\]
which concludes the proof of case (a).\\

For case (b) we can argue as in case (a), but interchanging the role of rows and columns, and obtain that 
\[
	\max\{\D(\o_{n^*-1}), \D(\o_{n^*-2})\} \geq 2L +1.
\]

For case (c), let $r^*$ and $c^*$ be respectively the row and the column where the black quasi-cross lies in configuration $\o_{n^*}$ and let $v^*$ the site where they intersect. We distinguish two scenarios: (c1) the quasi-cross has exactly one empty site, which has to be $v^*$, and (c2) the quasi-cross has exactly two empty sites both different from $v^*$. Figure~\ref{fig:c1c2} illustrates these two possible scenarios.

\begin{figure}[!h]
	\centering
	\subfloat[$\o_{n^*}$ in case (c1)\label{fig:c1o}]{\includegraphics[scale=0.95]{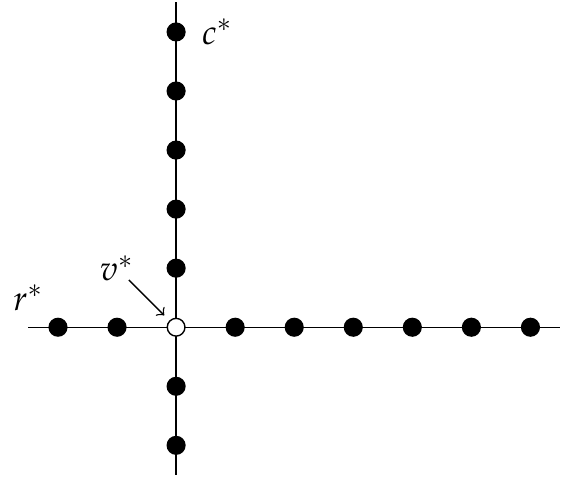}}
	\hspace{1.5cm}
	\subfloat[$\o_{n^*}$ in case (c2)\label{fig:c2o}]{\includegraphics[scale=0.95]{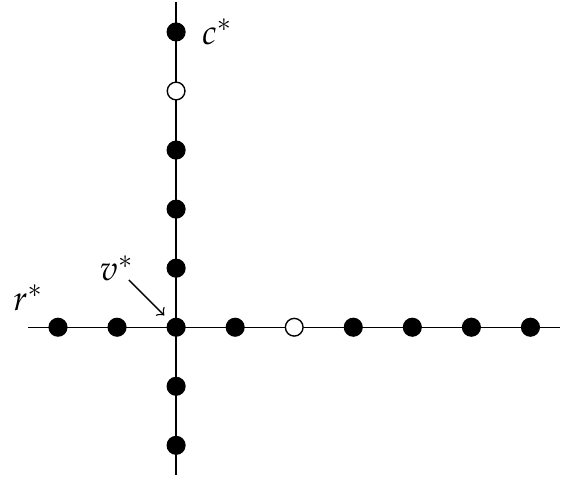}}
	\caption{Schematic representation of the two possible scenarios when the configuration $\o_{n^*}$ displays a black quasi-cross}
	\label{fig:c1c2}
\end{figure}
\FloatBarrier

Consider scenario (c1) first. The previous configuration $\o_{n^*-1}$ along the path $\o$ differs from $\o_{n^*}$ in exactly one site, say $v'$. By definition of $n^*$, configuration $\o_{n^*-1}$ does not display a quasi-cross, so such a site $v'$ has to lie either in row $r^*$, to which we refer as case (c1.i) (see Figure~\ref{fig:c1a}), or in column $c^*$, to which we refer as case (c1.ii) (see Figure~\ref{fig:c1b}). Furthermore, note that the quasi-cross present in $\o_{n^*}$ could only have been created by the addition of a black particle, hence $\o_{n^*-1}(v')=0$ and $\o_{n^*}(v')=\bb(v')$.

\begin{figure}[!h]
	\centering
	\subfloat[$\o_{n^*-1}$ in scenario (c1.i)\label{fig:c1a}]{\includegraphics[scale=0.95]{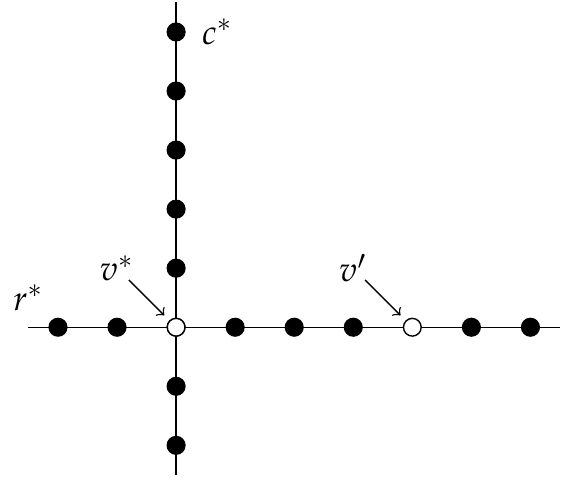}}
	\hspace{1.5cm}
	\subfloat[$\o_{n^*-1}$ in scenario (c1.ii)\label{fig:c1b}]{\includegraphics[scale=0.95]{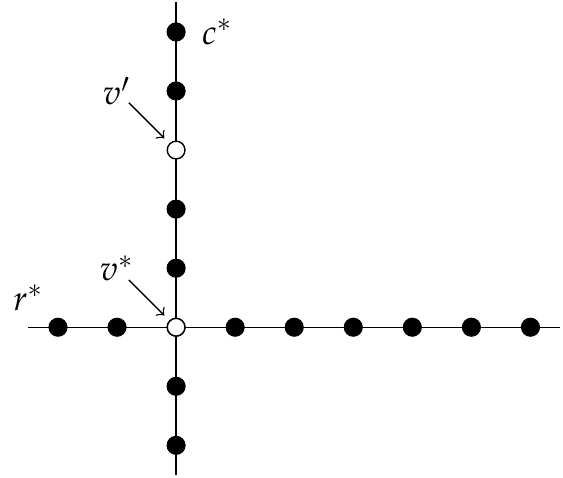}}
	\caption{Schematic representation of configuration $\o_{n^*-1}$ in case (c1)}
	\label{fig:c1}
\end{figure}
\FloatBarrier

In case (c1.i) $\o_{n^*-1}$ is such that $v'$ lies in row $r^*$, as in Figure~\ref{fig:c1a}. Then $\D^{r^*}(\o_{n^*-1})=2$, since row $r^*$ has exactly two empty sites, and $\D^{r}(\o_{n^*-1})\geq 2$ for all rows $r\neq r^*$, since none of them can display a black bridge or quasi-bridge (by definition of $n^*$) and neither a gray bridge or quasi bridge (by Lemma~\ref{lem:impossible}). Hence,
\[
	\D(\o_{n^*-1}) = \sum_{i=0}^{K-1} \D^{r_i}(\o_{n^*-1}) \geq 2 K.
\]
If $\D(\o_{n^*-1}) \geq 2K +1$, then the proof of case (c1.a) is complete. Suppose instead that $\D(\o_{n^*-1}) = 2K$ and consider the configuration $\o_{n^*-2}$ preceding $\o_{n^*-1}$ in the path $\o$. By construction, the configuration $\o_{n^*-2}$ differs from $\o_{n^*-1}$ by a single-site update and thus
\begin{equation}
\label{eq:c1apm1}
	\D(\o_{n^*-2})=\D(\o_{n^*-1})\pm1.
\end{equation}
Consider the case where $\D(\o_{n^*-2}) = \D(\o_{n^*-1})-1 = 2K - 1$, which means that configuration $\o_{n^*-2}$ has exactly one more particle than configuration $\o_{n^*-1}$. By virtue of the pigeonhole principle, the configuration $\o_{n^*-2}$ must have at least one row, say $r'$, such that $\D^{r'}(\o_{n^*-2})\leq 1$. In view of Lemma~\ref{lem:bqbi}, $\o_{n^*-2}$ then has to display a bridge or a quasi-bridge in row $r'$, which leads to a contradiction, since on this row $\o_{n^*-2}$ cannot have
\begin{itemize}
	\item a black horizontal bridge, by definition of $n^*$;
	\item a black horizontal quasi-bridge, since otherwise $\o_{n^*-2}$ would have a quasi-cross in row $r'$ and column $c^*$, violating the definition of $n^*$;
	\item a gray horizontal bridge or quasi-bridge, due to the presence of the black vertical quasi-bridge in column $c^*$ in view of Lemma~\ref{lem:impossible}.
\end{itemize} 
Hence, since $\D(\o_{n^*-1}) = 2K$, we deduce from~\eqref{eq:c1apm1} that
\[
	\D(\o_{n^*-2}) = 2K+1,
\]
which concludes the proof of case (c1.i).

In case (c1.ii) we can argue similarly to case (c1.i), by interchanging the role of rows and columns, and obtain that
\[
	\max\{\D(\o_{n^*-1}), \D(\o_{n^*-2})\} \geq 2L+1 \geq 2K+1.
\]

Consider now case (c2). We distinguish three scenarios, illustrated in Figure~\ref{fig:c2}, depending where the last particle (that created the quasi-cross in configuration $\o_{n^*}$) has been added: (c2.i) in a site $v'\neq v^*$ in row $r^*$ or (c2.ii) in a site $v'\neq v^*$ in column $c^*$ or (c2.iii) in the site $v'=v^*$.

\begin{figure}[!ht]
	\centering
	\subfloat[$\o_{n^*-1}$ in scenario (c2.i)\label{fig:c2a}]{\includegraphics[scale=0.945]{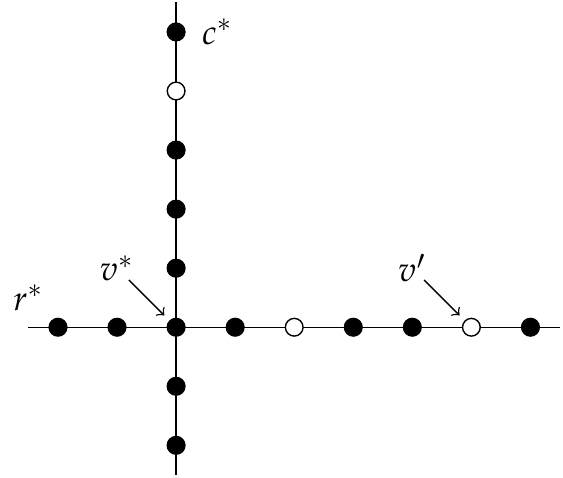}}
	\hspace{0.295cm}
	\subfloat[$\o_{n^*-1}$ in scenario (c2.ii)\label{fig:c2b}]{\includegraphics[scale=0.945]{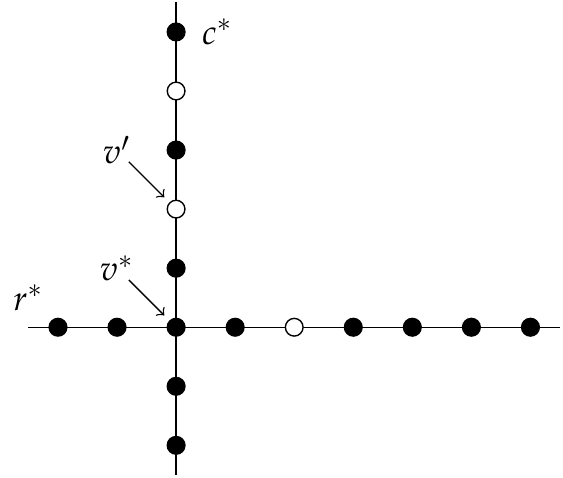}}
	\hspace{0.295cm}
	\subfloat[$\o_{n^*-1}$ in scenario (c2.iii)\label{fig:c2c}]{\includegraphics[scale=0.945]{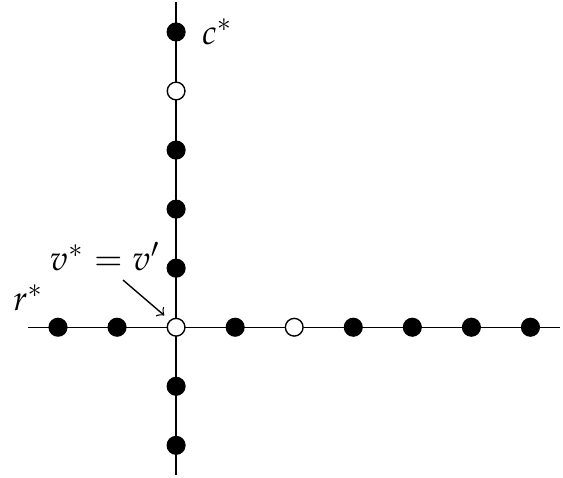}}
	\caption{Schematic representation of the three possible scenarios in case (c2)}
	\label{fig:c2}
\end{figure}
\FloatBarrier

In case (c2.i), let $c'$ be the column where site $v'$ lies. We first notice that configuration $\o_{n^*-1}$ cannot have a vertical gray bridge or quasi-bridge in column $c'$, since otherwise it would not be possible to add a black particle in site $v'$ in a single step, see Figure~\ref{fig:c2a}. Moreover $\o_{n^*-1}$ has no horizontal black quasi-bridges, since any of them would create, together with column $c^*$, a quasi-cross, violating the definition of $n^*$.

\begin{figure}[!h]
	\centering
	\includegraphics[scale=0.95]{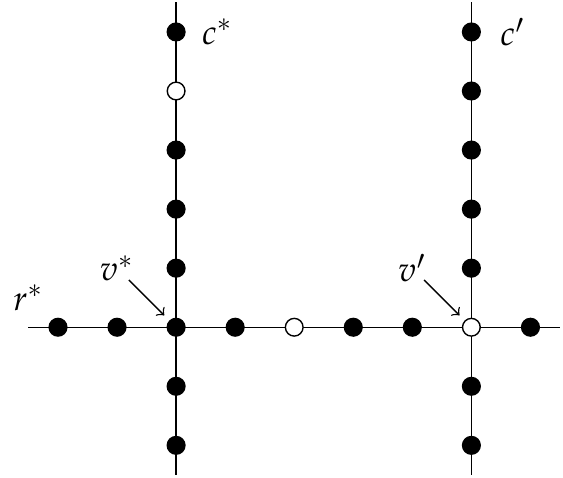}
	\caption{Schematic representation of configuration $\o_{n^*-1}$ in scenario (c2.i) with a quasi-bridge in column $c'$}
	\label{fig:c2afirstcase}
\end{figure}
\FloatBarrier
Suppose first that configuration $\o_{n^*-1}$ has a vertical black quasi-bridge in column $c'$, as in Figure~\ref{fig:c2afirstcase}, which means that
\begin{equation}
\label{eq:uc1}
	\D^{c'}(\o_{n^*-1}) =1.
\end{equation}
By virtue of Lemma~\ref{lem:impossible}, there cannot be any horizontal gray bridges or quasi-bridges. Furthermore, $\o_{n^*-1}$ has no horizontal black quasi-bridges, which would create a quasi-cross together with column $c'$, violating again the definition of $n^*$. In view of Lemma~\ref{lem:bqbi}, $\D^r(\o_{n^*-1}) \geq 2$ for every row $r$ and thus
\[
	\D(\o_{n^*-1}) \geq 2K.
\]
If $\D(\o_{n^*-1}) \geq 2K +1$, then the proof is complete. Suppose instead that $\D(\o_{n^*-1}) = 2K$ and consider the configuration $\o_{n^*-2}$ preceding $\o_{n^*-1}$ in the path $\o$. By construction, $\o_{n^*-2}$ differs from $\o_{n^*-1}$ by a single-site update and thus
\begin{equation}
\label{eq:c1apm1_bis}
	\D(\o_{n^*-2})=\D(\o_{n^*-1})\pm1.
\end{equation}
Consider the case where $\D(\o_{n^*-2}) = \D(\o_{n^*-1})-1 = 2K - 1$, which means that configuration $\o_{n^*-2}$ has exactly one more particle than configuration $\o_{n^*-1}$. By virtue of the the pigeonhole principle, the configuration $\o_{n^*-2}$ must have at least one row, say $r'$, such that $\D^{r'}(\o_{n^*-2})\leq 1$. In view of Lemma~\ref{lem:bqbi}, $\o_{n^*-2}$ has then to display a bridge or a quasi-bridge in row $r'$. If $r'=r^*$, then $\o_{n^*-2}$ would have a black quasi-cross or a black bridge, violating the definition of $n^*$. If $r' \neq r^*$, then we also obtain a contradiction, since in row $r'$ $\o_{n^*-2}$ cannot have
\begin{itemize}
	\item a black bridge, by definition of $n^*$;
	\item a black quasi-bridge, since otherwise $\o_{n^*-2}$ would have a quasi-cross in row $r'$ and column $c^*$, violating the definition of $n^*$;
	\item a gray bridge or quasi-bridge, due to the presence of a black quasi-bridge in column $c^*$ and Lemma~\ref{lem:impossible}.
\end{itemize} 
Hence, it cannot be the case that $\D(\o_{n^*-2}) = 2K - 1$, and from~\eqref{eq:c1apm1_bis} it follows that $\D(\o_{n^*-2}) = 2K+1.$

Suppose now that configuration $\o_{n^*-1}$ does not have a vertical black quasi-bridge in column $c'$. By virtue of Lemma~\ref{lem:bqbi}, $\D^{c'}(\o_{n^*-1}) \geq 2$. We first consider the case where
\begin{equation}
\label{eq:uc2}
	\D^{c'}(\o_{n^*-1}) =2,
\end{equation}
so that $\D^{c'}(\o_{n^*}) = 1$, which means that $\o_{n^*}$ has an additional quasi-cross, namely the one lying in row $r^*$ and column $c'$, see Figure~\ref{fig:c2atwocrosses}. In this case, we can conclude the proof by looking at this other quasi-cross and arguing as in sub-case (3) of scenario (c2.iii), which will be presented later.

\begin{figure}[!h]
	\centering
	\subfloat{\includegraphics[scale=0.95]{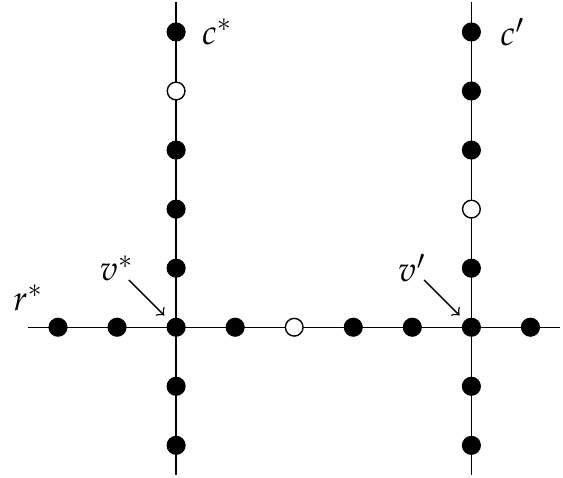}}
	\hspace{1.5cm}
	\subfloat{\includegraphics[scale=0.95]{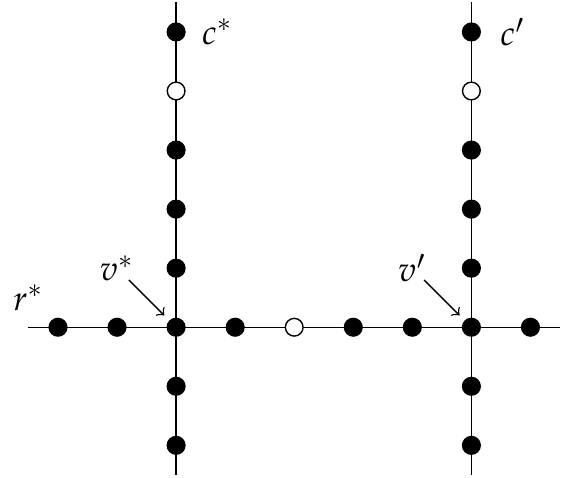}}
	\caption{Schematic representation of configuration $\o_{n^*}$ with two quasi-crosses}
	\label{fig:c2atwocrosses}
\end{figure}
\FloatBarrier

Therefore, in view of~\eqref{eq:uc1} and~\eqref{eq:uc2}, we can assume
\begin{equation}
\label{eq:Ucgeq3}
	\D^{c'}(\o_{n^*-1})\geq 3.
\end{equation}
We then distinguish three sub-cases, depending on whether $\o_{n^*-1}$ has (1) no vertical quasi-bridges (see Figure~\ref{fig:c2i1}) or (2) at least one gray vertical quasi-bridge and no black vertical quasi-bridges (see Figure~\ref{fig:c2i2}) or (3) at least one black vertical quasi-bridge (see Figure~\ref{fig:c2i3}).

\begin{figure}[!h]
	\centering
	\subfloat[$\o_{n^*-1}$ in sub-case (1)\label{fig:c2i1}]{\includegraphics[scale=0.95]{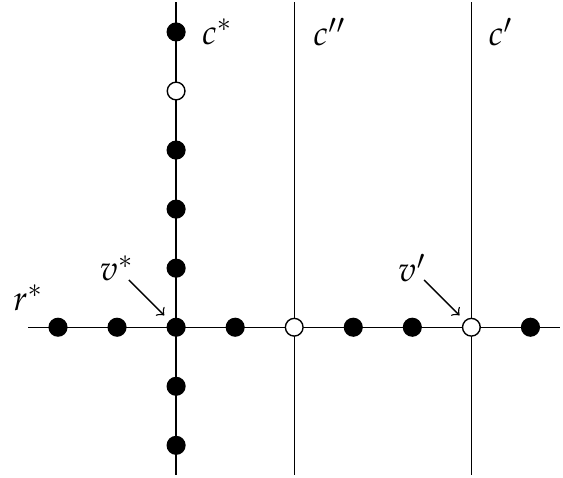}}
	\hspace{1.5cm}
	\subfloat[$\o_{n^*-1}$ in sub-case (2)\label{fig:c2i2}]{\includegraphics[scale=0.95]{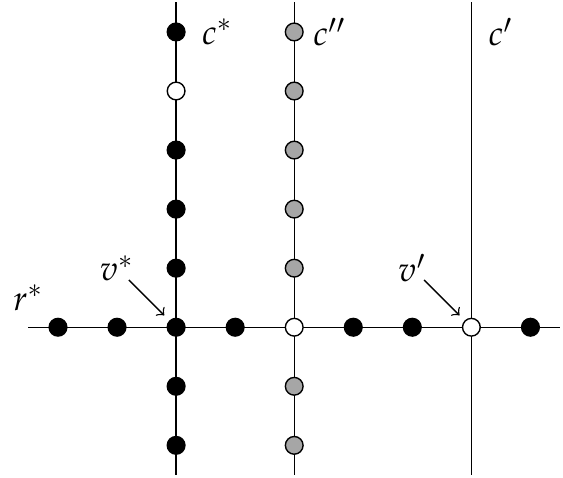}}
	\caption{Schematic representation of the sub-cases (1) and (2) of scenario (c2.i) when condition~\eqref{eq:Ucgeq3} is satisfied}
\end{figure}
\FloatBarrier

In sub-case (1), $\o_{n^*-1}$ has no vertical quasi-bridges except the one in column $c^*$, so by Lemma~\ref{lem:bqbi} $\D^{c}(o_{n^*-1}) \geq 2$ for every $c \neq c^*$. This fact and~\eqref{eq:Ucgeq3} yield
\[
	\D(\o_{n^*-1}) = \sum_{j=0}^{L-1} \D^{c_j}(\o_{n^*-1}) \geq 2L,
\]
and the proof of sub-case (1) is completed.\\

In sub-case (2), $\o_{n^*-1}$ can have only one vertical gray quasi-bridge and it must lie in column $c''$ by Lemma~\ref{lem:impossible}, see Figure~\ref{fig:c2i2}. Lemma~\ref{lem:bqbi} gives $\D^{c''}(\o_{n^*-1}) =1$. All the columns $c\neq c^*,c'', c'''$ do not display a vertical quasi-bridge, so $\D^{c}(\o_{n^*-1}) \geq 2$ by Lemma~\ref{lem:bqbi}. These facts and~\eqref{eq:Ucgeq3} yield
\[
	\D(\o_{n^*-1}) = \sum_{j=0}^{L-1} \D^{c_j}(\o_{n^*-1}) \geq 2L-1.
\]
If $\D(\o_{n^*-1}) \geq 2L$, the proof is complete. Suppose instead that $\D(\o_{n^*-1}) = 2L-1$ and consider the configuration $\o_{n^*-2}$ preceding $\o_{n^*-1}$ in the path $\o$. By construction, $\o_{n^*-2}$ differs from $\o_{n^*-1}$ by a single-site update and thus
\begin{equation}
\label{eq:c1apm1_bis3}
	\D(\o_{n^*-2})=\D(\o_{n^*-1})\pm1.
\end{equation}
Consider the case where $\D(\o_{n^*-2}) = \D(\o_{n^*-1})-1 = 2L - 2$, which means that configuration $\o_{n^*-2}$ has exactly one more particle than configuration $\o_{n^*-1}$. Such a particle cannot lie in site $v'$, otherwise $\o_{n^*-2}$ would have a quasi-cross, violating the definition of $n^*$. Furthermore, by virtue of the the pigeonhole principle, the configuration $\o_{n^*-2}$ must have at least two horizontal quasi-bridges or one horizontal bridge, which cannot exists neither black or gray by Lemma~\ref{lem:impossible} due to the presence of the black quasi-bridge in column $c^*$ and the gray quasi-bridge in column $c''$. Hence, $\D(\o_{n^*-2}) \neq 2L - 2$, and from~\eqref{eq:c1apm1_bis3} it follows that 
\[
	\D(\o_{n^*-2}) = 2L,
\]
which completes the proof of sub-case (2).

Consider now sub-case (3), which is illustrated in Figure~\ref{fig:c2i3}.
\captionsetup[subfigure]{labelformat=empty}
\begin{figure}[!ht]
	\centering
	\subfloat[Vertical black quasi-bridge in a column different from $c''$ with empty site aligned with that of $c^*$]{\includegraphics[scale=0.945]{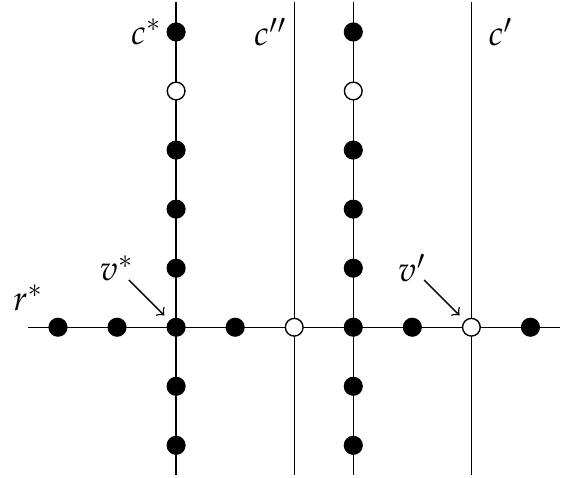}}
	\hspace{0.295cm}
	\subfloat[Vertical black quasi-bridge in a column different from $c''$ with empty site not aligned with that of $c^*$]{\includegraphics[scale=0.945]{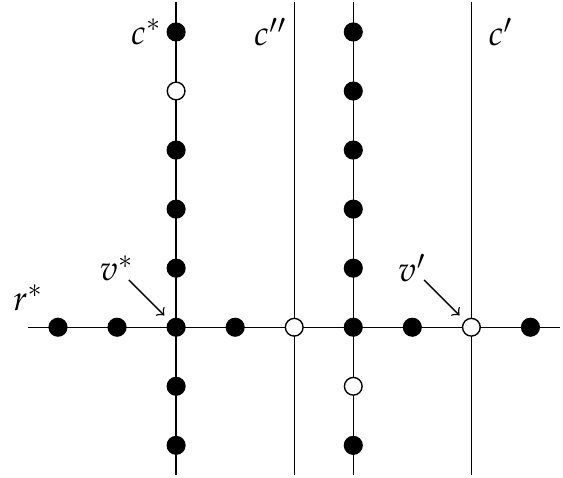}}
	\hspace{0.295cm}
	\subfloat[Vertical black quasi-bridge in column $c''$\label{fig:c2ai}]{\includegraphics[scale=0.945]{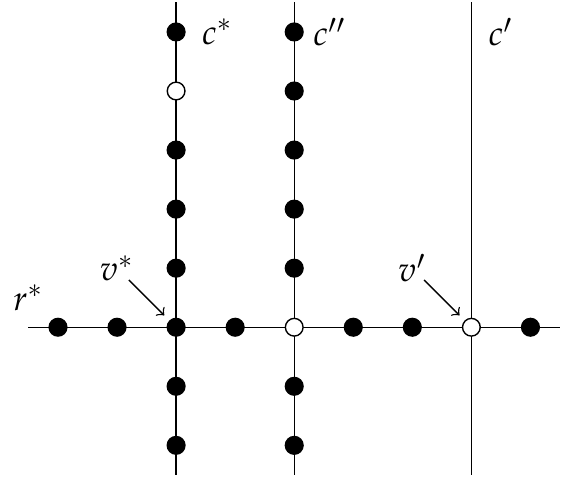}}	
	\caption{Schematic representation of $\o_{n^*-1}$ in sub-case (3) of scenario (c2.i) when condition~\eqref{eq:Ucgeq3} is satisfied}
	\label{fig:c2i3}
\end{figure}
\captionsetup[subfigure]{labelformat=parens}
\FloatBarrier

Since none of the rows can display a bridge or a quasi-bridge without contradicting Lemma~\ref{lem:impossible} or violating the definition of $n^*$, it follows from Lemma~\ref{lem:bqbi} that
\[
	\D(\o_{n^*-1}) \geq 2K.
\]
If $\D(\o_{n^*-1}) \geq 2K +1$, the proof of the sub-case is complete. Consider now the remaining case, namely $\D(\o_{n^*-1}) = 2K$. Consider the configuration $\o_{n^*-2}$ preceding $\o_{n^*-1}$ in the path $\o$. By construction, the configuration $\o_{n^*-2}$ differs from $\o_{n^*-1}$ by a single-site update and thus
\begin{equation}
\label{eq:c2aiiipm1}
	\D(\o_{n^*-2})=\D(\o_{n^*-1})\pm1.
\end{equation}
Consider the case where $\D(\o_{n^*-2}) = \D(\o_{n^*-1})-1 = 2K - 1$, so that configuration $\o_{n^*-2}$ has exactly one more particle than configuration $\o_{n^*-1}$ (in a site which cannot be $v'$). Thanks to the pigeonhole principle, the configuration $\o_{n^*-2}$ must have at least one row with energy difference strictly less than $2$. On such a row $\o_{n^*-2}$ then has to display a bridge or a quasi-bridge, thanks to Lemma~\ref{lem:bqbi}.

Since $\o_{n^*-2}$ has all the vertical black quasi-bridges that $\o_{n^*-1}$ has, it is impossible for $\o_{n^*-2}$ to display
\begin{itemize}
	\item a black horizontal bridge, by definition of $n^*$;
	\item a black horizontal quasi-bridge, since otherwise it would create a quasi-cross together with the vertical quasi-bridge in column $c^*$, violating the definition of $n^*$;
	\item a gray horizontal bridge by Lemma~\ref{lem:impossible}, due to the presence of the vertical black quasi-bridge in column $c^*$, see Figure~\ref{fig:c2i3};
	\item a gray horizontal quasi-bridge, since every row has either a black particle or two empty sites, see Figure~\ref{fig:c2i3}.
\end{itemize} 
Hence, $\D(\o_{n^*-2}) \neq 2K - 1$, and from~\eqref{eq:c2aiiipm1} it follows that 
\[
	\D(\o_{n^*-2}) = 2K+1,
\]
which completes the proof of sub-case (3).

In scenario (c2.ii), the configuration $\o_{n^*-1}$ preceding $\o_{n^*}$ along the path $\o$ cannot have any vertical quasi-bridge, since it would create a quasi-cross together with row $r^*$. We distinguish two sub-cases, depending on whether $\o_{n^*-1}$ has (1) no gray vertical quasi-bridges, see Figure~\ref{fig:c2ii1}, or (2) at least one gray vertical quasi-bridge, see Figure~\ref{fig:c2ii2}.
\begin{figure}[!h]
	\centering
	\subfloat[$\o_{n^*-1}$ in sub-case (1)\label{fig:c2ii1}]{\includegraphics[scale=0.95]{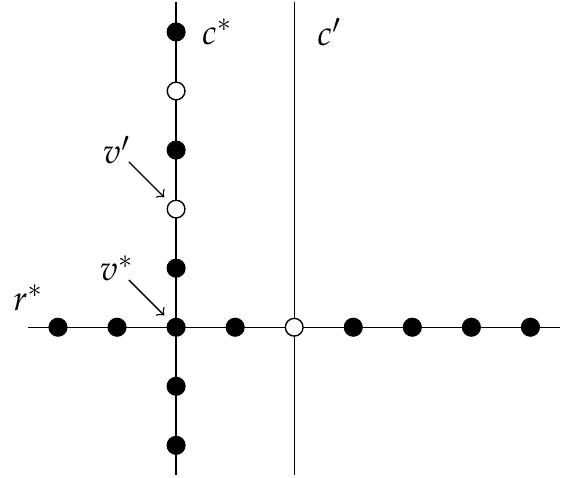}}
	\hspace{1.5cm}
	\subfloat[$\o_{n^*-1}$ in sub-case (2)\label{fig:c2ii2}]{\includegraphics[scale=0.95]{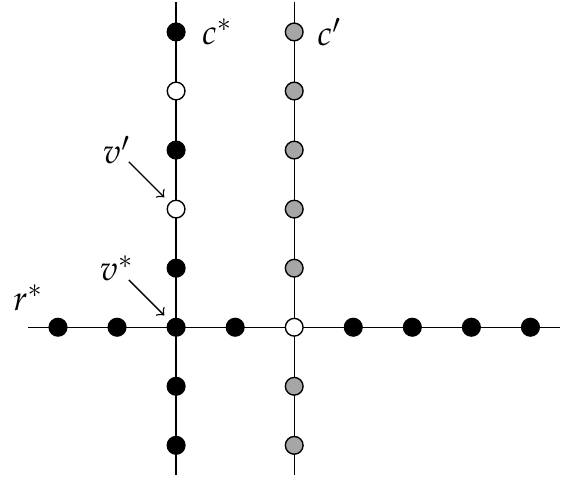}}
	\caption{Schematic representation of the two sub-cases in scenario (c2.ii)}
\end{figure}
\FloatBarrier

In sub-case (1), by Lemma~\ref{lem:bqbi} $\D^{c}(\o_{n^*-1}) \geq 2$ in every column $c$ and hence 
\[
	\D(\o_{n^*-1}) \geq 2L.
\]
In sub-case (2), it is clear that there can be only one gray vertical quasi-bridge, which intersects row $r^*$ in the empty site different from $v^*$. By looking at the energy difference in columns and arguing similarly to the final part of sub-case (2) of scenario (c2.i), we can conclude that 
\[
	\max\{ \D(\o_{n^*-1}), \D(\o_{n^*-2})\} \geq 2L.
\]

Consider now scenario (c2.iii). By definition of $n^*$, $\o_{n^*-1}$ does not have any black bridge and by Lemma~\ref{lem:bqbi} it cannot have any gray bridge either. We distinguish three sub-cases, depending on whether $\o_{n^*-1}$ has (1) no vertical quasi-bridges, see Figure~\ref{fig:c2iii1}, (2) gray vertical quasi-bridges, but no black vertical quasi-bridges, see Figure~\ref{fig:c2iii2}, or (3) black vertical quasi-bridges, see Figure~\ref{fig:c2iii3}.

In sub-case (1), by Lemma~\ref{lem:bqbi} $\D^{c}(\o_{n^*-1}) \geq 2$ in every column $c$ and hence 
\[
	\D(\o_{n^*-1}) \geq 2L.
\]
In sub-case (2), $\o_{n^*-1}$ can have only one vertical gray quasi-bridge and it must lie in column $c'$ by Lemma~\ref{lem:impossible}, see Figure~\ref{fig:c2iii2}. Lemma~\ref{lem:bqbi} gives $\D^{c'}(o_{n^*-1}) =1$. All the columns $c\neq c'$ do not display a vertical quasi-bridge, so $\D^{c}(o_{n^*-1}) \geq 2$ by Lemma~\ref{lem:bqbi}. These facts and~\eqref{eq:Ucgeq3} yield
\[
	\D(\o_{n^*-1}) = \sum_{j=0}^{L-1} \D^{c_j}(o_{n^*-1}) \geq 2L-1.
\]
If $K<L$, then $\D(\o_{n^*-1}) \geq 2L-1 \geq 2K+1$ and the proof of sub-case (2) is complete. If $K=L$ and $\D(\o_{n^*-1}) \geq 2L$, then the proof is also complete. Suppose instead that $K=L$ and $\D(\o_{n^*-1}) = 2L-1=2K-1$ and consider the configuration $\o_{n^*-2}$ preceding $\o_{n^*-1}$ in the path $\o$. By construction, $\o_{n^*-2}$ differs from $\o_{n^*-1}$ by a single-site update and thus
\begin{equation}
\label{eq:c1apm1_bisc2iii2}
	\D(\o_{n^*-2})=\D(\o_{n^*-1})\pm1.
\end{equation}
Consider the case where $\D(\o_{n^*-2}) = \D(\o_{n^*-1})-1 = 2K - 2$, which means that configuration $\o_{n^*-2}$ has exactly one more particle than configuration $\o_{n^*-1}$. By virtue of the the pigeonhole principle, the configuration $\o_{n^*-2}$ must have at least two horizontal quasi-bridges or one horizontal bridge, which cannot exists neither black or gray by Lemma~\ref{lem:impossible} due to the presence of the black quasi-bridge in column $c^*$ and the gray quasi-bridge in column $c'$. Hence, $\D(\o_{n^*-2}) \neq 2K - 1$, and from the fact that $\D(\o_{n^*-1}) = 2K-1$ and~\eqref{eq:c1apm1_bisc2iii2} it follows that 
\[
	\D(\o_{n^*-2}) = 2K,
\]
which completes the proof of sub-case (2).

\begin{figure}[!h]
	\centering
	\subfloat[$\o_{n^*-1}$ in sub-case (1)\label{fig:c2iii1}]{\includegraphics[scale=0.95]{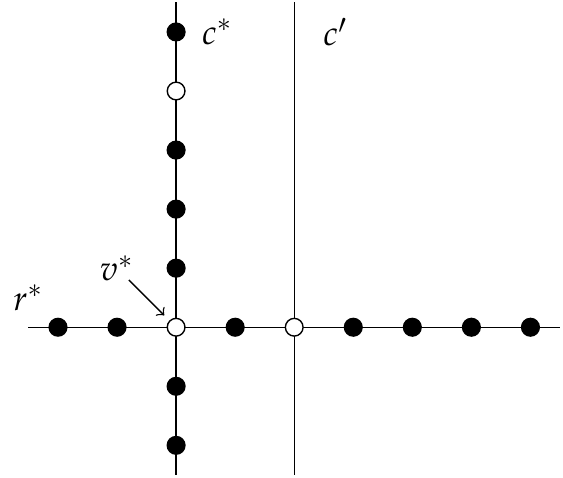}}
	\hspace{1.5cm}
	\subfloat[$\o_{n^*-1}$ in sub-case (2)\label{fig:c2iii2}]{\includegraphics[scale=0.95]{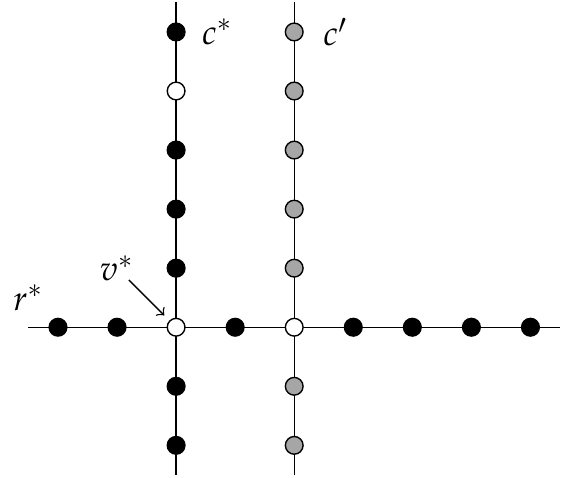}}
	\caption{Schematic representation of sub-cases (1) and (2) in scenario (c2.iii)}
\end{figure}
\FloatBarrier

In sub-case (3), illustrated in Figure~\ref{fig:c2iii3} the presence of a vertical black quasi-bridge in a column $c'' \neq c^*$ means that there are no horizontal quasi-bridges in $\o_{n^*-1}$. Indeed the presence of a horizontal black quasi-bridge together with the quasi-bridge in column $c''$ would create a quasi-cross, violating the definition of $n^*$. Furthermore, it is impossible for $\o_{n^*-1}$ to have a horizontal gray quasi-bridge, since every row has either a black particle or two empty sites, see Figure~\ref{fig:c2iii3}. 

\captionsetup[subfigure]{labelformat=empty}
\begin{figure}[!h]
	\centering
	\subfloat[$\o_{n^*-1}$ with the empty site of column $c'$ in row $r^*$]{\includegraphics[scale=0.95]{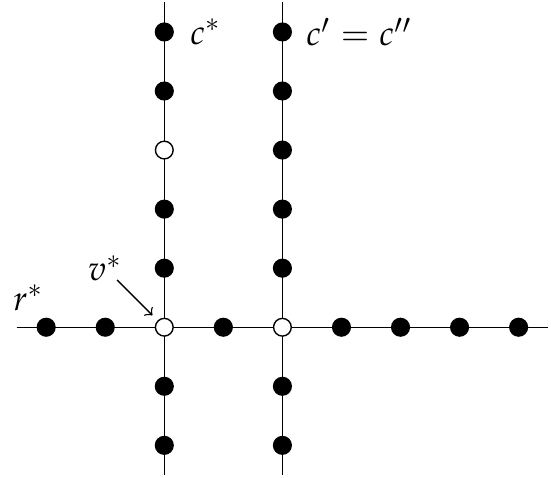}}
	\hspace{0.4cm}
	\subfloat[$\o_{n^*-1}$ with the empty site of column $c'$ aligned with that of column $c^*$]{\includegraphics[scale=0.95]{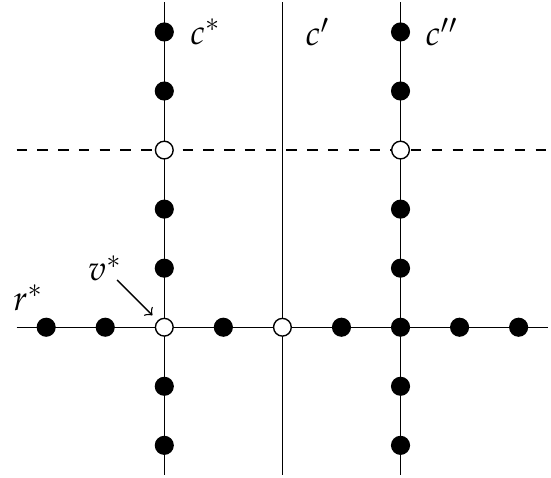}}
	\hspace{0.4cm}
	\subfloat[$\o_{n^*-1}$ with the empty site of column $c'$ not in row $r^*$ and not aligned with that of column $c^*$]{\includegraphics[scale=0.95]{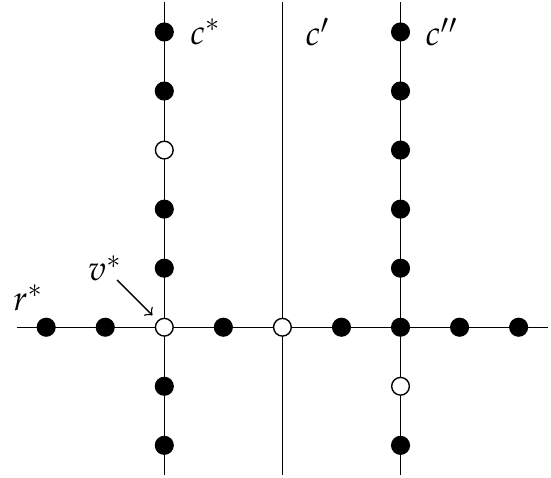}}
	\caption{Schematic representation of configuration $\o_{n^*-1}$ in sub-case (3) of (c2.iii)}
	\label{fig:c2iii3}
\end{figure}
\captionsetup[subfigure]{labelformat=parens}
\FloatBarrier

Since trivially $\o_{n^*-1}$ displays no bridges either, by applying Lemma~\ref{lem:bqbi} to the rows, we get
\[
	\D(\o_{n^*-1}) \geq 2K.
\]
If $\D(\o_{n^*-1}) \geq 2K +1$, then the proof of sub-case (3) is complete. Suppose instead that $\D(\o_{n^*-1}) = 2K$ and consider the configuration $\o_{n^*-2}$ preceding $\o_{n^*-1}$ in the path $\o$. By construction, the configuration $\o_{n^*-2}$ differs from $\o_{n^*-1}$ by a single-site update and thus
\begin{equation}
\label{eq:c2iii3pm1}
	\D(\o_{n^*-2})=\D(\o_{n^*-1})\pm1.
\end{equation}
Consider the case where $\D(\o_{n^*-2}) = \D(\o_{n^*-1})-1 = 2K - 1$, so that configuration $\o_{n^*-2}$ has exactly one more particle than configuration $\o_{n^*-1}$. Thanks to the pigeonhole principle, the configuration $\o_{n^*-2}$ must have at least one row, say $r'$ with energy difference strictly less than $2$. On such a row $\o_{n^*-2}$ then has to display a bridge or a quasi-bridge, thanks to Lemma~\ref{lem:bqbi}. This leads to a contradiction, since $\o_{n^*-2}$ cannot have in row $r'$ 
\begin{itemize}
	\item a black bridge, by definition of $n^*$;
	\item a black quasi-bridge, since otherwise $\o_{n^*-2}$ would have a quasi-cross in row $r'$ and column $c''$, violating the definition of $n^*$;
	\item a gray bridge or quasi-bridge, since every row has either a black particle or two empty sites, see Figure~\ref{fig:c2iii3}.
\end{itemize} 
Hence, by~\eqref{eq:c2iii3pm1} we obtain that 
\[
	\D(\o_{n^*-2}) = 2K+1. \qedhere
\]
\end{proof}

\subsection*{Reduction algorithm for WR configurations (periodic boundary conditions)}
We describe now an iterative procedure, to which we will refer as \textit{reduction algorithm} that builds a path $\o$ from a suitable initial WR configuration $\s$ to any of the $M$ stable configurations. Assume that the target stable configuration is $\bbb \in \ss$ and that $\bbb$ is the configurations with all sites occupied by particles of type $b$, to which we refer as \textit{black particles}. 

The algorithm outputs a path in which black particles are progressively added column by column to the original configuration $\s$ removing all non-black particles when necessary. In order to be able to start this procedure, we require that the initial configuration $\s$ is such that all the sites in the first vertical stripe $C_1$ are either empty or occupied by black particles, \ie
\begin{equation}
\label{eq:racond_toricgrid}
	\s(v) \in \{0,b\} \quad \forall \, v \in C_1.
\end{equation}
The desired path $\o$ is constructed in such a way that the maximum energy achieved along the path is $H(\s)+1$ and is built as the concatenation of $L$ paths $\o^{(1)}, \dots, \o^{(L)}$. The intuition is that for every $j=1,\dots,L$ along path $\o^{(j)}$ the non-black particles are removed from column $c_{j+1}$ and simultaneously black particles are progressively added on column $c_{j}$. More specifically, the path $\o^{(j)}$ goes from $\s_{j}$ to $\s_{j+1}$, where we define $\s_1=\s$ and for $j=2,\dots, L+1$
\begin{equation}
\label{eq:oj}
	\s_{j}(v) :=
	\begin{cases}
		b & \text{ if } v \in \bigcup_{i=1}^{j-1} c_i,\\
		0 	& \text{ if } v \in c_{j} \text{ and } \s(v)\neq b,\\
		\s(v) & \text{ if } v \in c_{j} \text{ and } \s(v)=b \text{ or } v \in \bigcup_{i=j+1}^{L} c_i.
	\end{cases}
\end{equation}
Clearly, due to the periodic boundary conditions, the column indices $L$ and $0$ should be identified. It is easy to check that $\s_{L+1}=\bbb$. We now describe in detail how to construct each of the paths $\o^{(j)}$ for $j=1,\dots,L$. We build a path $\o^{(j)}=(\o^{(j)}_1, \dots, \o^{(j)}_{2K+1})$ of length $2K+1$ (but possibly with void moves), with $\o^{(j)}_1=\s_j$ and $\o^{(j)}_{2K+1}=\s_{j+1}$. We repeat iteratively the following procedure for every $i=1,\dots,2K$:
\begin{itemize}
\item If $i \equiv 1 \pmod 2$, consider site $v=(j+1, (i-1)/2)$:
	\begin{itemize}
		\item[-] If $\o^{(j)}_i(v) \in \{0,b\}$, then set $\o^{(j)}_{i+1}(v)=\o^{(j)}_{i}(v)$;
		\item[-] If $\o^{(j)}_i(v)\not\in \{0, b\}$, then remove the non-black particle in site $v$ from configuration $\o^{(j)}_i$, obtaining in this way a new configuration $\o^{(j)}_{i+1}$ with $\o^{(j)}_{i+1}(v)=0$ and thus $H(\o^{(j)}_{i+1}) = H(\o^{(j)}_{i})+1$.
	\end{itemize}
\item If $i \equiv 0 \pmod 2$, consider site $v=(j,i/2-1)$:
	\begin{itemize}
		\item[-] If $\o^{(j)}_i(v)=b$, then set $\o^{(j)}_{i+1}(v)=\o^{(j)}_i(v)=b$;
		\item[-] If $\o^{(j)}_i(v)=0$, then add a black particle in site $v$, obtaining in this way a new configuration $\o^{(j)}_{i+1}$ such that $\o^{(j)}_{i+1}(v)=b$ and $H(\o^{(j)}_{i+1}) = H(\o^{(j)}_{i})-1$. This is WR configuration because, by construction, there are no particles of different type in any neighboring sites of $v$. In particular, the site at the right of $v$ has possibly been emptied at the previous step, if it was occupied by a non-black particle.
	\end{itemize}
\end{itemize}
Note that there are no non-black particles in $c_0$ in configuration $\s$ by virtue of~\eqref{eq:racond_toricgrid} and this properties is inherited by all the configurations $\s_1,\dots,\s_{L}$. As a consequence, in the last path $\o^{(L)}$ all steps corresponding to odd values of $i$ are void. For every $j=1, \dots, L$, the configurations $\s_j$ and $\s_{j+1}$ satisfy the inequality
\[
	H(\s_{j+1}) \leq H(\s_j),
\]
since along the path $\o^{(j)}$ connecting them the number of black particles added in column $c_j$ is greater than or equal to the number of non-black particles removed in column $c_{j+1}$. Moreover,
\[
	\Phi_{\o^{(j)}} \leq H(\s_j) + 1,
\]
since along the path $\o^{(j)}$ every particle removal (if any) is always followed by a particle addition. These two properties imply that the path $\o: \s \to \bbb$ created by concatenating $\o^{(1)}, \dots, \o^{(L)}$ satisfies $\Phi_{\o} \leq H(\s)+1$, and thus 
\[
	\Phi(\s,\bbb)-H(\s) \leq 1.
\]
Note that this procedure can be adapted to construct a path from $\s \in \cX$ with target any other stable configuration, say $\bs^{(m)}$, $m=1,\dots,\mm$, provided that the following condition holds:
\begin{equation}
\label{eq:racond_toricgrid_bis}
	\s(v) \in \{0,m\} \quad \forall \, v \in C_1. 
\end{equation}
that is $\s$ has only empty sites or particles of type $m$ in the first vertical stripe.\\

We now show how the reduction algorithm can be used to build a reference path with prescribed energy height between any pair of stable configurations. The maximum energy barrier along such a reference path matches the right-hand side of inequality~\eqref{eq:lowertg_WR}, proving Theorem~\ref{thm:structuralpropertiesWR}(i). 

\begin{prop}[Reference path between stable WR configurations]\label{prop:refpathwrt}
Consider the $K \times L$ square lattice $\L$ with periodic boundary conditions with $(K,L) \neq (2,2), (2,3), (3,2)$. For every pair of stable configurations $\bs, \bsp \in \ss$ there exists a path $\o^*: \aa \to \bb$ in $\cX$ such that
\[
	\Phi_{\o^*} - H(\aa) = 
	\begin{cases}
		2K & \text{if } K=L,\\
		\min\{2K,2L\}+1 & \text{if } K\neq L.
	\end{cases}
\]
\end{prop}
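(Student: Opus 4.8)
The plan is to construct the path $\o^*$ explicitly and to obtain the stated equality for free: since Proposition~\ref{prop:lowertg_WR} shows that \emph{every} path from $\aa$ to $\bb$ has height at least $H(\aa)$ plus the claimed value, it suffices to exhibit one path $\o^*$ whose height is \emph{at most} that value. First I would cut down the bookkeeping exactly as in the proof of Proposition~\ref{prop:lowertg_WR}: the projection $\hat\xi$ of~\eqref{eq:projection} lets me assume $\mm=2$, and up to relabeling I take $\aa=\bs^{(1)}$ (all gray) and $\bb=\bs^{(2)}$ (all black). I also assume without loss of generality that $K\le L$, so that the target height equals $2K$ when $K=L$ and $2K+1$ when $K<L$.

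I would then split $\o^*$ into two segments. The terminal segment is handed over to the reduction algorithm: once the path reaches a configuration $\s^*$ whose first vertical stripe $C_1$ contains only empty sites or black particles, i.e.\ \eqref{eq:racond_toricgrid_bis} holds with $m=b$, the algorithm produces a path $\s^*\to\bb$ whose height exceeds $H(\s^*)$ by at most $1$. Thus everything reduces to building a first segment $\aa\to\s^*$ that nucleates a black bridge while controlling the number of simultaneously empty sites, and to choosing $\s^*$ with $\D(\s^*)$ as small as the target permits: namely $\D(\s^*)=2K$ when $K<L$, and $\D(\s^*)=2K-1$ when $K=L$, so that the reduction step's final $+1$ lands exactly on the target.

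For the nucleation segment I would grow a single black cluster out of the all-gray configuration. One empties the four neighbors of a chosen site and plants the first black particle, then propagates the black region along one column, always keeping a thin empty frontier between the growing bridge and the surrounding gray so that no forbidden adjacency is ever created; this uses repeatedly that black and gray cannot be adjacent within a column, together with Lemma~\ref{lem:bqbi} to read off the per-row and per-column energy differences. Concretely, to turn column $c_0$ into a black bridge I would sweep its rows from the bottom, at each row first emptying the two horizontally adjacent buffer sites (in $c_1$ and $c_{L-1}$) and the site one step ahead in $c_0$, and then inserting the black particle. Since every removal is compensated by a later insertion and the two buffer columns are emptied exactly once, the count of empty sites peaks at $2K+1$, attained precisely when the last buffer sites are emptied just before the bridge is closed by its wrap-around neighbor. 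This produces $\s^*$ with $c_0$ a black bridge, $c_1$ and $c_{L-1}$ empty, and all other columns still gray, so $\D(\s^*)=2K$ and the subsequent reduction keeps the height at $2K+1$, which is the target for $K<L$.

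The main obstacle is the square case $K=L$, where the target is $2K$ and the construction above overshoots by one. Here I would deliberately avoid passing through the symmetric two-empty-column configuration: from it any continuation must first empty a site of a still-gray column, immediately reaching $2K+1$. Instead I would route the nucleation through a black quasi-cross, exactly the bottleneck configuration isolated in case (c2.iii) of the lower-bound proof, whose energy difference is $2K=2L$. The delicate point is to order the moves so that the single move completing a bridge, which is an energy-lowering insertion, occurs precisely at the step that would otherwise create the $(2K+1)$-st empty site, exploiting the interchangeability of rows and columns available only when $K=L$. This lets me reach a configuration $\s^*$ with $\D(\s^*)=2K-1$ along a path of height $2K$, after which the reduction algorithm terminates without exceeding $2K$. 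In all cases the resulting $\o^*$ has $\Phi_{\o^*}-H(\aa)$ equal to the value claimed in the statement, and combined with Proposition~\ref{prop:lowertg_WR} this proves the proposition.
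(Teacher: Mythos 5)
Your global architecture --- invoking Proposition~\ref{prop:lowertg_WR} for the lower bound so that it suffices to exhibit one path of height at most the target, with the reduction algorithm as the terminal segment --- is exactly the paper's, and your case $K<L$ is correct: the straight-buffer nucleation of a black bridge in $c_0$ peaks at $2K+1$ and ends in a configuration with $\D=2K$ satisfying~\eqref{eq:racond_toricgrid}, after which the reduction algorithm stays within the target $2K+1$.

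The square case $K=L$, which is the genuinely delicate part of this proposition, has a real gap. You correctly identify what is needed --- an intermediate $\s^*$ with $\D(\s^*)=2K-1$, with the stripe $C_1$ empty-or-black, reachable at height $2K$ --- but the mechanism you propose cannot produce it. You suggest timing ``the single move completing a bridge'' so that it ``occurs precisely at the step that would otherwise create the $(2K+1)$-st empty site''. No such scheduling exists: by the hard-core constraint, the insertion closing a black bridge in $c_0$ is admissible only after both gray horizontal neighbors of the remaining hole have been removed, so with straight empty buffers in $c_1$ and $c_{L-1}$ the configuration immediately preceding that insertion has exactly $2K+1$ empty sites, whatever the order of the moves. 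What the paper exploits, and what is missing from your sketch, is a change in the \emph{geometry} of the empty layer: after nucleating the quasi-bridge with straight buffers (height $2K$, ending at $\D=2K-1$), it morphs the straight buffers into a diagonal boundary (the ``black diamond'') by strictly alternating one removal with one addition, so the height never exceeds $2K$; the diamond has only $\D=2K-2$ because each empty site on a diagonal separates black from gray both in its row and in its column, and the two wrapped diagonals close up into a boundary of length $2K-2$ only when $K=L$ --- this is where squareness is really used, not in any interchangeability of rows and columns in the move ordering. From the diamond, one further removal gives $\D=2K-1$ with $C_1$ empty-or-black, and the reduction algorithm finishes at height at most $2K$. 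Your quasi-cross intuition is consistent with this (the diamond is a monochromatic quasi-cross), but absent the diamond-morphing construction --- and the separate verification required for $K=L=3$, where no diamond exists and one must check directly that the reduction started from the $\D=2K$ configuration does not exceed height $2K$ --- the case $K=L$ remains unproven.
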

\begin{proof}
In this proof we associate the color \textit{gray} to the type of particles present in configuration $\aa$ and the color \textit{black} to that present in configuration $\bb$. Without loss of generality we may assume that $K \leq L$. We then distinguish two cases, depending on whether (a) $K=L$ and (b) $K <L$.

Consider case (a) first. We will show that there exists a path $\o^*: \aa \to \bb$ such that $\Phi_{\o^*} - H(\aa) = 2K = 2L$. 
If $K=2$, $\o^*$ is simply the path that gradually removes all the four gray particles that $\aa$ has and then add four black particles one by one; one can immediately check that $\Phi_{\o^*} - H(\aa) = 4 = 2K$. We henceforth assume that $K\geq 3$. We construct the path $\o^*$ as concatenation of three paths, which we denote by $\o^{(1)}$, $\o^{(2)}$ and $\o^{(3)}$, respectively.

We first build a path $\o^{(1)}$ of length $4(K-1)+3$ as follows. First remove in two steps the gray particles in sites $(0,0)$ and $(0,K-1)$, obtaining configuration $\o^{(1)}_3$ with energy $H(\o^{(1)}) = H(\aa) +2$, as illustrated in Figure~\ref{fig:refpathWRtoric_omega1}. Then, iteratively define configuration $\o^{(1)}_{4+i}$ from $\o^{(1)}_{4+i-1}$ for $i=0,\dots,4(K-2)+3$ as follows:
\begin{itemize}
	\item If $i \equiv 0 \pmod 4$, then obtain configuration $\o^{(1)}_{4+i-1}$ from $\o^{(1)}_{4+i}$ by removing the gray particle in site $(L-1, \lfloor \frac{i}{4} \rfloor)$.
	\item If $i \equiv 1 \pmod 4$, then obtain configuration $\o^{(1)}_{4+i-1}$ from $\o^{(1)}_{4+i}$ by removing the gray particle in site $(1, \lfloor \frac{i}{4} \rfloor)$.
	\item If $i \equiv 2 \pmod 4$, then obtain configuration $\o^{(1)}_{4+i-1}$ from $\o^{(1)}_{4+i}$ by removing the gray particle in site $(0, \lfloor \frac{i}{4} \rfloor+1)$.
	\item If $i \equiv 3 \pmod 4$, then obtain configuration $\o^{(1)}_{4+i-1}$ from $\o^{(1)}_{4+i}$ by adding a black particle in site $(0, \lfloor \frac{i}{4} \rfloor)$.
\end{itemize}
Note that the step corresponding to $i=4(K-2)+2$ is void, since there is no particle in site $(0,K-1)$ to be removed, having been removed at the second step of $\o^{(1)}$. Denote by $\s':=\o^{(1)}_{4(K-1)+3}$ the configuration obtained by this procedure, which has energy difference 
\[
	\D(\s') = H(\s') - H(\aa) = 2K-1.
\]
The way the path $\o^{(1)}$ is built guarantees that
\begin{equation}
\label{eq:phiomega1_KequalL}
	\Phi_{\o^{(1)}} - H(\aa)= \max_{\h \in \o^{(1)}} H(\h) - H(\aa)= 2K.
\end{equation}

If $K = 3$, the second path $\o^{(2)}$ is not needed: Consider the configuration $\s'''$ obtained from $\s'$ by removing the gray particle in the site $(1,K-1)$ and thus $H(\s''')=H(\s')+1$. The configuration $\s'''$ satisfies the initial condition~\eqref{eq:racond_toricgrid}, so we can use the reduction algorithm to build the path $\o^{(3)}$ from $\s'''$ to $\bb$. The concatenation of $\o^{(1)}$ and $\o^{(3)}$ yields a path $\o^*$ from $\aa$ to $\bb$ such that \[\Phi_{\o^*}-H(\aa)=6=2K.\]

Therefore, we can assume that $K \geq 4$. In this case, we build a path $\o^{(2)}$ of length $K(K-4)+4$, which gradually enlarges column by column the quasi-bridge that configuration $\s'$ has until a ``black diamond'' is created, obtaining the configuration that we denote by $\s''$, see Figure~\ref{fig:refpathWRtoric_omega1}. 
\captionsetup[subfigure]{labelformat=empty}
\begin{figure}[!t]
	\centering
	\subfloat[$\o^{(1)}_3$]{\includegraphics[scale=1]{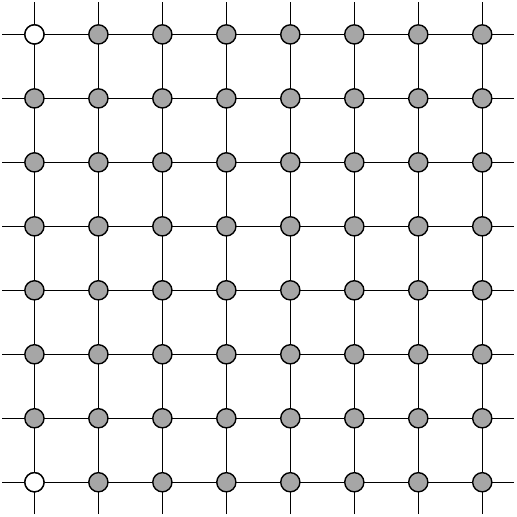}}
	\hspace{0.5cm}
	\subfloat[$\o^{(1)}_7$]{\includegraphics[scale=1]{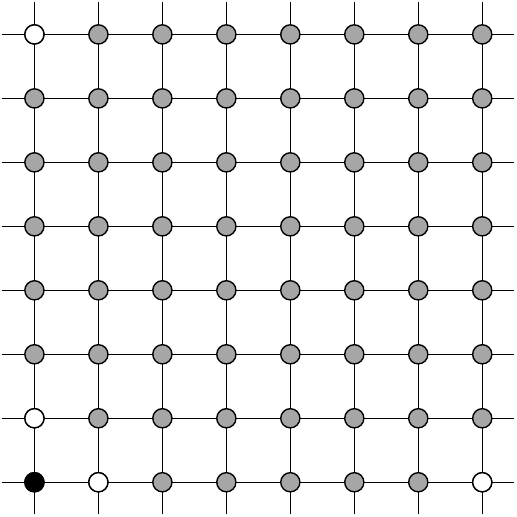}}
	\hspace{0.5cm}
	\subfloat[$\o^{(1)}_{11}$]{\includegraphics[scale=1]{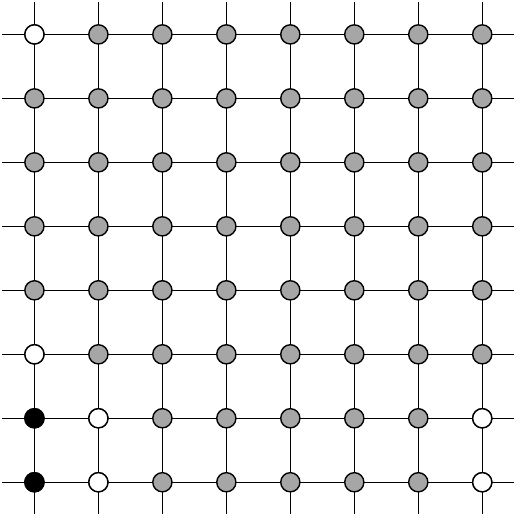}}
	\\
	\subfloat[$\s'=\o^{(1)}_{31}=\o^{(2)}_{1}$]{\includegraphics[scale=1]{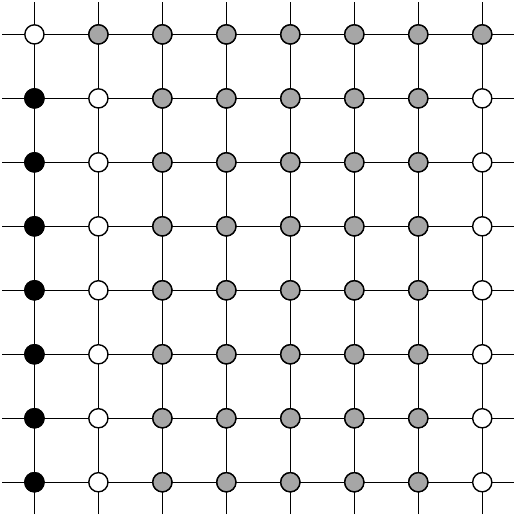}}
	\hspace{0.5cm}
	\subfloat[$\o^{(2)}_{21}$]{\includegraphics[scale=1]{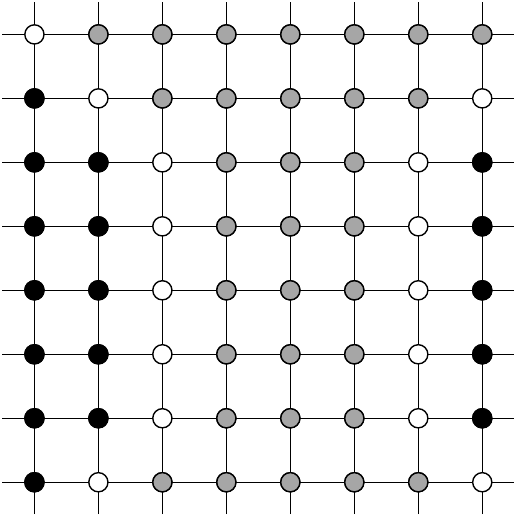}}
	\hspace{0.5cm}
	\subfloat[$\o^{(2)}_{33}$]{\includegraphics[scale=1]{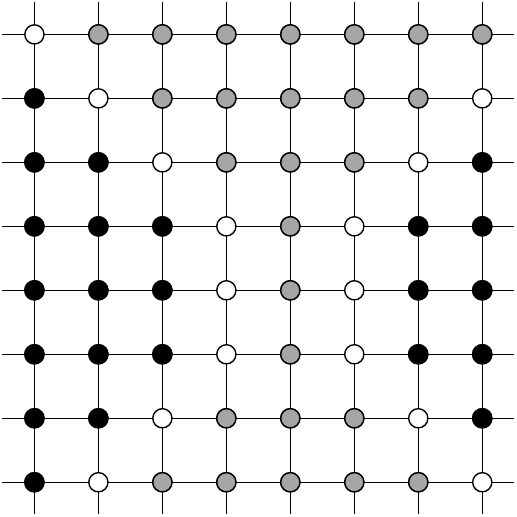}}
	\\
	\subfloat[$\o^{(2)}_{34}$]{\includegraphics[scale=1]{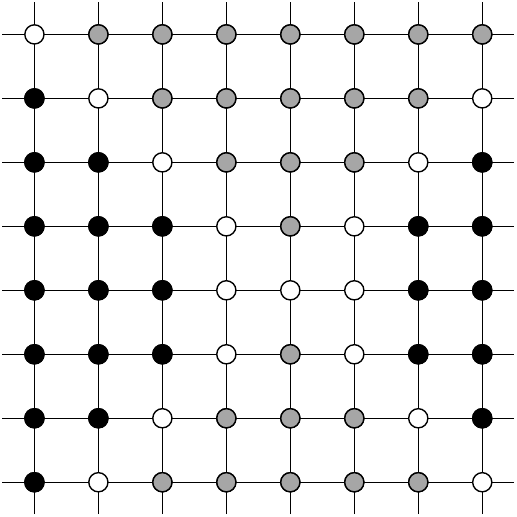}}
	\hspace{0.5cm}
	\subfloat[$\s''=\o^{(2)}_{36}$]{\includegraphics[scale=1]{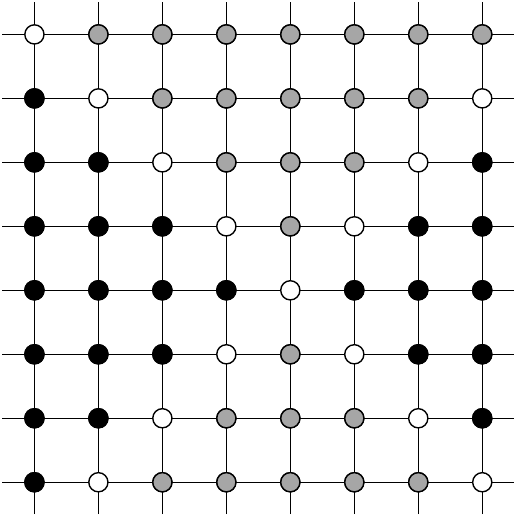}}
	\hspace{0.5cm}
	\subfloat[$\s'''$]{\includegraphics[scale=1]{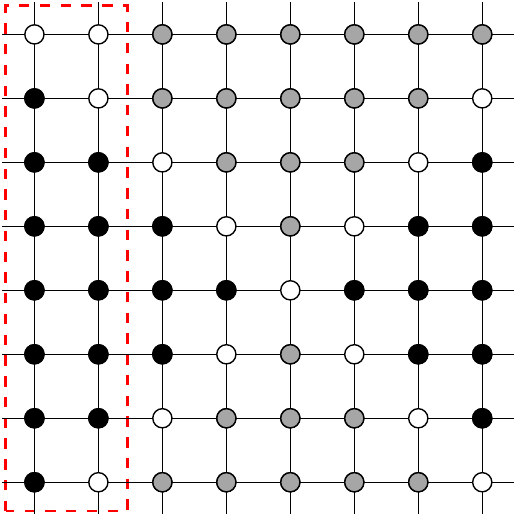}}
	\caption{Some snapshots of the paths $\o^{(1)}$ and $\o^{(1)}$ for the $8 \times 8$ square lattice $\L$ with periodic boundary conditions}
	\label{fig:refpathWRtoric_omega1}
\end{figure}
\FloatBarrier

It is easy to check that
$
	\D(\s'') = 2K-2.
$
Furthermore, it is possible to build such a black diamond starting from $\s'$ by alternatively removing a (gray) particle and adding immediately after a (black) particle, so that the path $\o^{(2)}$ satisfies
\begin{equation}
\label{eq:phiomega2_KequalL}
	\Phi_{\o^{(2)}} - H(\s') = \max_{\h \in \o^{(2)}} H(\h) - H(\s') = 1.
\end{equation}
Consider now configuration $\s'''$ obtained from $\s''$ by removing the gray particle in the site $(1,K-1)$ and thus $H(\s''')=H(\s'')+1$. Having only black particles or empty sites in stripe $C_1$, the configuration $\s'''$ satisfies the initial condition~\eqref{eq:racond_toricgrid} and thus the reduction algorithm can be used to build a path $\o^{(3)}$ from $\s'''$ to $\bb$ such that
\begin{equation}
\label{eq:phiomega3_KequalL}
	\Phi_{\o^{(3)}} - H(\s''') = \max_{\h \in \o^{(3)}} H(\h) - H(\s''') = 1.
\end{equation}
In view of~\eqref{eq:phiomega1_KequalL}-\eqref{eq:phiomega3_KequalL}, the path $\o^*$ obtained by concatenating $\o^{(1)}$, $\o^{(2)}$ and $\o^{(3)}$ satisfies
\[
	\Phi_{\o^*} - H(\aa) = \max_{\h \in \o^*} H(\h) -H(\aa) = 2K.
\]

\captionsetup[subfigure]{labelformat=empty}
\begin{figure}[!h]
	\centering
	\subfloat[$\aa$]{\includegraphics[scale=0.925]{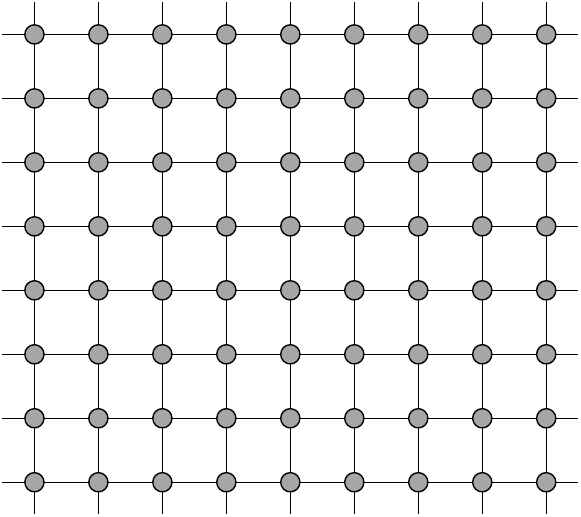}}
	\hspace{0.2cm}
	\subfloat[$\s^*$]{\includegraphics[scale=0.925]{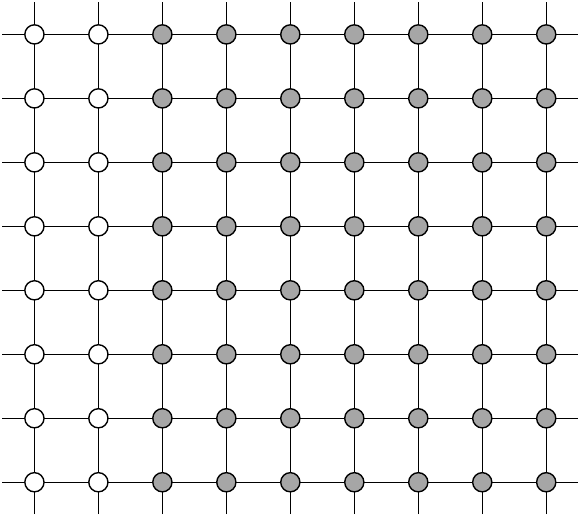}}
	\hspace{0.2cm}
	\subfloat[$\o^{(2)}_2$]{\includegraphics[scale=0.925]{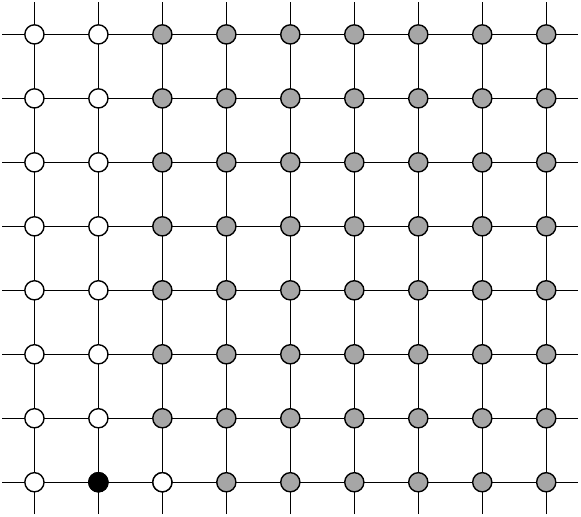}}
	\hspace{0.2cm}
	\subfloat[$\o^{(2)}_{16}$]{\includegraphics[scale=0.925]{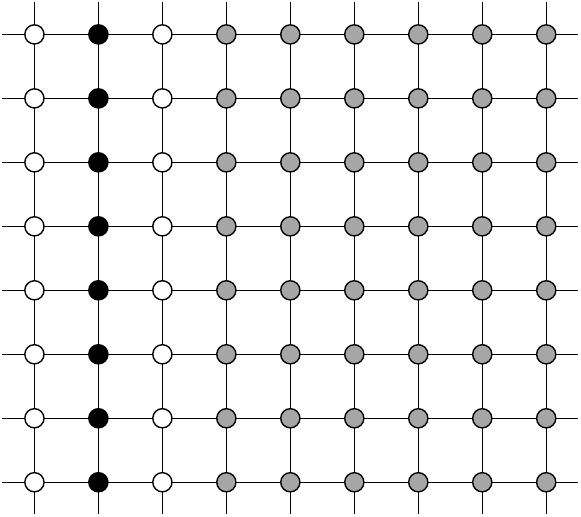}}
	\hspace{0.2cm}
	\subfloat[$\o^{(2)}_{18}$]{\includegraphics[scale=0.925]{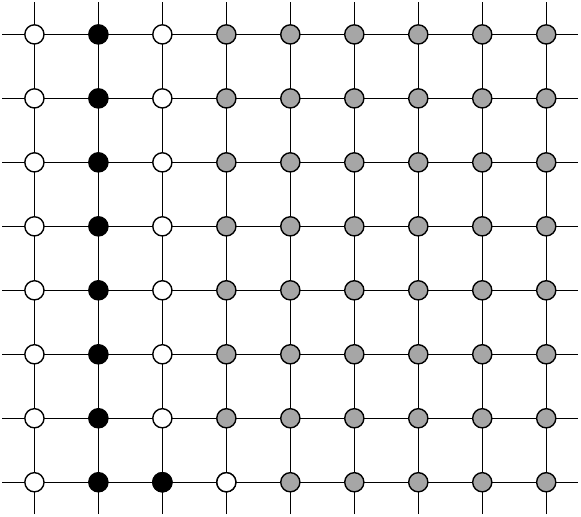}}
	\hspace{0.2cm}
	\subfloat[$\bb$]{\includegraphics[scale=0.925]{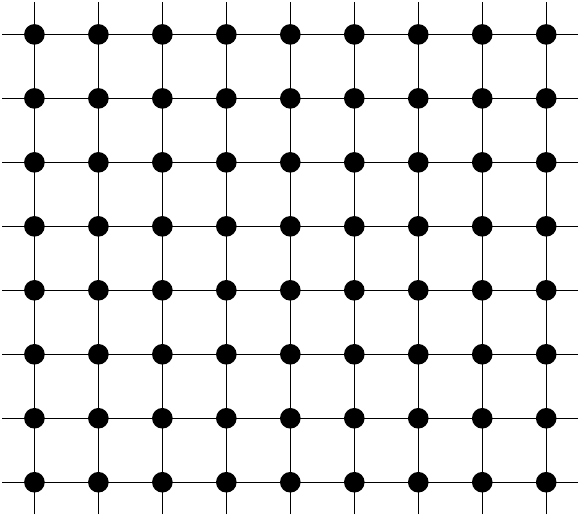}}
	\caption{Some snapshots of the reference path $\o^*:\aa\to\bb$ for the $8 \times 9$ square lattice  $\L$ with periodic boundary conditions}
	\label{fig:refpathWRtoric_2}
\end{figure}
\FloatBarrier
\captionsetup[subfigure]{labelformat=parens}

In case (b), where $K<L$, we will show that there exists a path $\o^*: \aa \to \bb$ such that $\Phi_{\o^*} - H(\aa) = 2K+1$. We construct such a path $\o^*$ as the concatenation of two shorter paths, $\o^{(1)}$ and $\o^{(2)}$, where $\o^{(1)}: \aa \to \s^*$ and $\o^{(2)}: \s^* \to \bb$. The intermediate configuration $\s^* \in \cX$ is the one obtained from $\aa$ removing all the particles residing in the first vertical stripe $C_1$ (see also Figure~\ref{fig:refpathWRtoric_2}), \ie
\[
	\s^*(v):=
	\begin{cases}
		\aa(v) 	& \text{ if } v \in \L \setminus C_1,\\
		0 			& \text{ if } v \in C_1.
	\end{cases}
\]
The path $\o^{(1)}=(\o^{(1)}_{1},\dots,\o^{(1)}_{2K+1})$, with $\o^{(1)}_{1}=\aa$ and $\o^{(1)}_{2K+1}=\s^*$ can be constructed as follows. For $i=1,\dots,2K$, at step $i$ we remove from configuration $\o^{(1)}_{i}$ the particle in site $(\lfloor \frac{i}{K} \rfloor, i- K \cdot \lfloor \frac{i}{K} \rfloor)$, increasing the energy by $1$ and obtaining in this way configuration $\o^{(1)}_{i+1}$. Therefore the configuration $\s^*$ is such that $H(\s^*)-H(\aa) = 2K$ and
\begin{equation}
\label{eq:phiomega1WR}
	\Phi_{\o^{(1)}} = H(\s^*) = H(\aa)+ 2K.
\end{equation}

The second path $\o^{(2)}: \s^* \to \bb$ is then constructed by means of the reduction algorithm, which can be used here since the configuration $\s^*$ satisfies condition~\eqref{eq:racond_toricgrid}, having no particles on the vertical stripe $C_1$. The algorithm guarantees that 
\begin{equation}
\label{eq:phiomega2WR}
	\Phi_{\o^{(2)}} = H(\s^*) +1.
\end{equation}
The concatenation of $\o^{(1)}$ and $\o^{(2)}$ yields a path $\o^*: \aa \to \bb$, that, by virtue of~\eqref{eq:phiomega1WR} and~\eqref{eq:phiomega2WR}, satisfies $\Phi_{\o^*} = H(\aa)+ 2K+1$. 
\end{proof}

The reduction algorithm is also the key ingredient in the proof Theorem~\ref{thm:structuralpropertiesWR}(ii), which we now present.

\begin{proof}[Proof of Theorem~\ref{thm:structuralpropertiesWR}(ii)]
Assume without loss of generality that $K\leq L$. We want to show that for every WR configuration $\s \not\in \ss$ the following inequality holds
\[
	\Phi(\s,\ss) - H(\s) < 2 K.
\]
We distinguish two scenarios, depending on whether (a) $\s$ has a vertical $m$-bridge for some $m=1,\dots,\mm$ or (b) $\s$ has no vertical bridges. In scenario (a) we will show that $\Phi(\s,\bs^{(m)}) - H(\s) \leq 1$, while in scenario (b) we will prove that $\Phi(\s,\bbb) - H(\s) < 2 K$, where $\bbb \in \ss$ is the stable configuration with only particles of type $b$ (to which we henceforth refer as \textit{black} particles).

Consider scenario (a) first, and, without loss of generality, assume that $\s$ has a vertical $m$-bridge on the first column $c_0$. Due to the hard-core constraints between unlike particles, there cannot be particles on column $c_1$, unless they are of type $m$. Hence, $\s$ satisfies condition~\eqref{eq:racond_toricgrid_bis} and is then a suitable initial configuration for the reduction algorithm with target state $\bs^{(m)}$. Using this procedure, we obtain a path $\o$ such that $\Phi(\s,\bs^{(m)}) - H(\s) \leq 1$, since at every step where a particle of type different $m$ is (possibly) removed, at the next step a particle of type $m$ is added in the neighboring site.

As far as scenario (b) is concerned, denote by $g$ the number of non-black particles that configuration $\s$ has in the first vertical stripe. By construction $\s$ has no vertical bridges and thus
\begin{equation}
\label{eq:tb_toric}
	g \leq 2K-2.
\end{equation}
Let $\s^* \in \cX$ be the configuration obtained from $\s$ by removing all the $g$ non-black particles in the first vertical stripe, namely
\[
	\s^*(v):=
	\begin{cases}
		\s(v) 	& \text{ if } v \in \L \setminus C_1 \text{ or } v \in C_1 \text{ and } \s(v)=b,\\
		0 			& \text{ if } v \in C_1 \text{ and } \s(v)\neq b.
	\end{cases}
\]
Clearly $H(\s^*)-H(\s) = g$. We construct a path  $\o^{(1)}=(\o^{(1)}_1,\dots,\o^{(1)}_{g+1})$ from $\o^{(1)}_1=\s$ to $\o^{(1)}_{g+1}=\s^*$ as follows. For $i=1,\dots,g$, at step $i$ we remove the first non-black particle in lexicographic order residing in stripe $C_1$ from configuration $\o^{(1)}_i$, obtaining in this way $\o^{(1)}_{i+1}$, which is such that $H(\o^{(1)}_{i+1})= H(\o^{(1)}_{i})+1$. Thus,
\begin{equation}
\label{eq:phiomega1}
	\Phi_{\o^{(1)}} = H(\s^*) = H(\s)+g.
\end{equation}
Note that the configuration $\s^*$ satisfies condition~\eqref{eq:racond_toricgrid}, since it all sites in the first vertical stripe $C_1$ are either empty or occupied by black particles. Thus, $\s^*$ is a suitable initial configuration for the reduction algorithm, which returns a path $\o^{(2)}: \s^* \to \bbb$ such that
\begin{equation}
\label{eq:phiomega2}
	\Phi_{\o^{(2)}} \leq H(\s^*)+1.
\end{equation}
The concatenation of paths $\o^{(1)}$ and $\o^{(2)}$ yields a new path $\o: \s \to \bbb$ that, in view of~\eqref{eq:phiomega1} and~\eqref{eq:phiomega2}, satisfies inequality $\Phi_{\o} \leq H(\s)+g+1$. Therefore, using~\eqref{eq:tb_toric}, we get
\[
	\Phi(\s,\bbb)-H(\s) \leq g+1 < 2K. \qedhere
\]
\end{proof}

In both Propositions~\ref{prop:lowertg_WR} and~\ref{prop:refpathwrt} we excluded the special cases $(K,L) =(2,2),(2,3),(3,2)$, in which it can be checked ``by hand'' that the communication height between stable configurations is still the highest energy barrier of the entire energy landscape and that takes the values $3$, $4$, and $4$, respectively.

\section{Proofs for square lattices with open boundary conditions}
\label{sec6}
In this section we prove the structural properties outlined in Theorem~\ref{thm:structuralpropertiesWR} for the energy landscape corresponding to the Widom-Rowlison model on a square lattice $\L$ with \textit{open} boundary conditions. The proof approach is the same as that used in Section~\ref{sec5} for square lattices with periodic boundary conditions, but a few details and counting arguments need to be adjusted appropriately, in view of the different structure that rows and columns have in this case, as one notice by comparing Lemmas~\ref{lem:bqbi} and~\ref{lem:bqbso_opengrid}. 

This section is organized as follows. We first prove a lower bound for the communication height between any pair of stable configurations, see Proposition~\ref{prop:lowerog_WR} below. We then introduce a modified version of \textit{reduction algorithm} introduce in Section~\ref{sec5} that leverages the open boundary conditions and use it to exhibit a reference path between any pair of stable configurations (Proposition~\ref{prop:refpathwro}) that attains the lower bound in Proposition~\ref{prop:lowerog_WR}, completing in this way the proof of Theorem~\ref{thm:structuralpropertiesWR}(i). Lastly, we use again the reduction algorithm to construct paths with a prescribed energy height from every configuration $\s \not \in \ss$ to the subset $\ss$, proving Theorem~\ref{thm:structuralpropertiesWR}(ii).

\begin{prop}[Lower bound for $\Phi(\aa,\bb)$]\label{prop:lowerog_WR}
Consider the Widom-Rowlison model on the $K \times L$ square lattice $\L$ with open boundary conditions. The communication height between any pair of stable configurations $\aa, \bb \in \ss$ in the corresponding energy landscape satisfies
\[
	\Phi(\aa,\bb) - H(\aa) \geq \min\{K,L\}+1.
\]
\end{prop}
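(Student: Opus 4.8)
The plan is to adapt the argument of Proposition~\ref{prop:lowertg_WR}, keeping the same global strategy but replacing the two-empty-sites-per-row bound coming from Lemma~\ref{lem:bqbi} with the weaker one-empty-site bound that is all that is available under open boundary conditions (Lemma~\ref{lem:bqbso_opengrid}). First I would reduce to $\mm=2$ exactly as in the periodic case: the projection $\hat\xi$ of~\eqref{eq:projection} preserves both admissibility and energy and maps any path $\o:\aa\to\bb$ to a path with identical energy profile, and this construction never uses the boundary conditions. Colouring the particles of $\aa$ gray and those of $\bb$ black and assuming without loss of generality that $K\le L$, the goal becomes to show that every path $\o=(\o_1,\dots,\o_n)$ from $\aa$ to $\bb$ (which we may take to have no void moves) contains a configuration $\o_i$ with $\D(\o_i)=H(\o_i)-H(\aa)\ge K+1$, whence $\Phi_\o-H(\aa)\ge K+1$. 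Since $\aa$ displays no black bridge while $\bb$ does, I would let $\o_{n^*}$ be the \emph{first} configuration along $\o$ displaying a black bridge; this index is well defined, and since a black bridge requires at least $K\ge 2$ black particles, none of which are present in $\aa$, we have $n^*\ge K+1\ge 3$, so $\o_{n^*-2}$ exists.

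Suppose first that $\o_{n^*}$ displays a black \emph{vertical} bridge, in a column $c^*$. Then $\o_{n^*-1}$ has a black vertical quasi-bridge in $c^*$, with unique empty site in some row $r'$. No row of $\o_{n^*-1}$ can carry a horizontal bridge: a black one would contradict the minimality of $n^*$, and a gray one would be perpendicular to the black quasi-bridge in $c^*$, which is forbidden by Lemma~\ref{lem:impossible}(ii). Hence by Lemma~\ref{lem:bqbso_opengrid} every row satisfies $\D^{r}(\o_{n^*-1})\ge 1$, so $\D(\o_{n^*-1})\ge K$. If this is $\ge K+1$ we are done; otherwise $\D(\o_{n^*-1})=K$, i.e.\ every row has exactly one empty site, and I would pass to $\o_{n^*-2}$, which differs by a single update so that $\D(\o_{n^*-2})=K\pm 1$. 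The case where $\o_{n^*}$ displays a black \emph{horizontal} bridge is handled by the symmetric argument with rows and columns interchanged and yields the larger bound $L+1\ge K+1$; the possibility that a horizontal and a vertical black bridge are completed simultaneously is subsumed, since then $\o_{n^*-1}$ still contains a black vertical quasi-bridge and the vertical argument applies verbatim.

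The crux, and the step I expect to be most delicate, is ruling out $\D(\o_{n^*-2})=K-1$. Here $\o_{n^*-2}$ has one more particle than $\o_{n^*-1}$, obtained by removing a particle from a site $u$. Removing at $u=(c^*,r')$ would make $\o_{n^*-2}$ carry either a full black bridge in $c^*$ (contradicting the minimality of $n^*$) or a gray particle adjacent to the black quasi-bridge (violating the hard-core constraint), so $u\notin c^*$ and $\o_{n^*-2}$ still displays the same black vertical quasi-bridge in $c^*$. But with only $K-1$ empty sites spread over $K$ rows, the pigeonhole principle forces some row $r''$ with $\D^{r''}(\o_{n^*-2})=0$, that is, a horizontal bridge by Lemma~\ref{lem:bqbso_opengrid}; this bridge cannot be black (minimality of $n^*$) nor gray (it would again be perpendicular to the surviving black quasi-bridge in $c^*$, forbidden by Lemma~\ref{lem:impossible}(ii), as the intersection site with $c^*$ is either black or empty). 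This contradiction forces $\D(\o_{n^*-2})=K+1$ and closes the vertical case, so that $\max\{\D(\o_{n^*-1}),\D(\o_{n^*-2})\}\ge K+1$ along every path and hence $\Phi(\aa,\bb)-H(\aa)\ge K+1=\min\{K,L\}+1$. The essential difference from the periodic case is precisely that open boundaries let a single empty site break a row, which replaces the factor $2K$ by $K$ while leaving the dynamical ``$+1$'' intact.
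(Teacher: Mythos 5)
Your proof is correct, and its skeleton coincides with the paper's: reduction to $\mm=2$ via the projection $\hat\xi$, identification of the first configuration $\o_{n^*}$ displaying a black bridge, row/column counting of energy differences via Lemma~\ref{lem:bqbso_opengrid}, and a pigeonhole argument at $\o_{n^*-2}$ to exclude the descending step. Within that skeleton you make three genuine changes worth recording. First, you merge the paper's cases (a) (vertical bridge only) and (c) (cross) into a single case, exploiting that in both situations $\o_{n^*-1}$ carries a black vertical quasi-bridge in $c^*$; the paper keeps them separate because in case (a) it can stop already at $\o_{n^*-1}$: counting empty sites of $\o_{n^*}$ row by row gives $\D(\o_{n^*})\geq K$, and the newly vacated site $v^*\in c^*$ is distinct from all of those, so $\D(\o_{n^*-1})\geq K+1$ outright. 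Your unified version invokes the two-step argument even where the paper does not need it (indeed, in the bridge-only sub-case $\D(\o_{n^*-1})=K$ can never actually occur), but it is perfectly valid. Second, in the cross situation you count rows (bound $K$) where the paper counts columns (bound $L\geq K$); both suffice for $\min\{K,L\}+1$. Third, and most notably, to rule out $\D(\o_{n^*-2})=\D(\o_{n^*-1})-1$ you appeal directly to Lemma~\ref{lem:impossible}(ii) — the full row produced by pigeonhole would be a gray bridge perpendicular to the surviving black quasi-bridge in $c^*$ — whereas the paper uses an ad hoc move-counting argument (a gray bridge would make completing the black bridge in two further single-site updates impossible, since five are needed, cf.\ Figure~\ref{fig:nogb}). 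Your exclusion step is cleaner and uniform across cases; you also made explicit a point the paper leaves implicit, namely that $n^*\geq K+1\geq 3$ so that $\o_{n^*-2}$ exists.
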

\begin{proof}
Similarly to what has been done in the proof of Proposition~\ref{prop:lowertg_WR}, also for this proof we can work under the assumption that $\mm=2$. Indeed, using the same automorphism $\hat{\xi}$ of the space space $\cX$ introduced in~\eqref{eq:projection}, every WR configuration $\s$ with $\mm$ particle types is mapped to a new WR configuration $\xi(\s)$ that, while having only two types of particles, has the same energy as the original configuration $\s$. In particular, every path $\o: \bs \to \bsp$ is mapped to a path $\hat{\xi}(\o): \bs \to \bsp$ with the same energy profile and height, since  
\[
	H(\hat{\xi}(\o)_i) = H(\hat{\xi}(\o_i)) = H(\o_i) , \quad \forall \, i =1,\dots,n.
\]
We henceforth assume that $\mm=2$ and, modulo a relabeling of the particle types, we can assume that $\bs=\bs^{(1)}$ and $\bsp=\bs^{(2)}$ and associate the gray color to particles of type $1$ and the black color to those of type $2$.\\

Furthermore, without loss of generality, we may assume that $K \leq L$. We need to show that in every path $\o: \aa \to \bb$ there is at least one configuration with energy difference greater than or equal to $K+1$. Take a path $\o = (\o_1,\dots, \o_n)$ and, without loss of generality, we may assume that there are no void moves in $\o$, \ie at every step either a particle is added or a particle is removed, so that $H(\o_{i+1}) = H(\o_{i}) \pm 1$ for every $1 \leq i \leq n-1$. Since $\aa$ has no black bridges, while $\bb$ does, at some point along the path $\o$ there must be a configuration $\o_{n^*}$ that is \textit{the first} to display a black vertical bridge or a black horizontal bridge or both simultaneously (i.e.~a black cross). We claim that 
\[
	\max \{\D(\o_{n^*-1}), \D(\o_{n^*-2})\} \geq \min\{K,L\}+1.
\]
We distinguish the following three cases:
\begin{itemize}
	\item[(a)] $\o_{n^*}$ displays a black vertical bridge only;
	\item[(b)] $\o_{n^*}$ displays a black horizontal bridge only;
	\item[(c)] $\o_{n^*}$ displays a black cross.
\end{itemize}
These three cases cover all the possibilities, since the addition of a single particle cannot create two parallel bridges simultaneously.

For case (a), we claim that the energy difference in every row is greater than or equal to one. Suppose by contradiction that there exists a row $r^*$ such that $\D^{r^*}(\o_{n^*})=0$. Then, by Lemma~\ref{lem:bqbso_opengrid}, $\o_{n^*}$ would have a bridge in row $r^*$. Such a bridge cannot be black, since otherwise $\o_{n^*}$ would have a black cross, and neither gray, which by Lemma~\ref{lem:impossible} could not coexist with the black vertical bridge that $\o_{n^*}$ has, contradiction. Hence $\D^{r}(\o_{n^*})\geq 1$ for every row $r$ and thus
\[
	\D(\o_{n^*}) = \sum_{i=0}^{K-1} \D^{r_i}(\o_{n^*}) \geq K.
\]
The previous configuration $\o_{n^*-1}$ along the path $\o$ differs from $\o_{n^*}$ in exactly one site, say $v^*$, which is such that $\o_{n^*-1}(v^*)=0$ and $\o_{n^*}(v^*)=\bb(v^*)$. Hence $\D(\o_{n^*-1})=\D(\o_{n^*})+1$ and therefore
\[
	\D(\o_{n^*-1}) \geq K+1.
\]

For case (b) we can argue as in case (a), but interchanging the role of rows and columns, and obtain that 
\[
	\D(\o_{n^*-1}) \geq L +1 \geq K+1.
\]

For case (c), let $r^*$ and $c^*$ be respectively the row and the column where the black cross lies in configuration $\o_{n^*}$. The previous configuration $\o_{n^*-1}$ along the path $\o$ differs from $\o_{n^*}$ in exactly one site, denoted by $v^*$, which has to be the site where row $r^*$ and column $c^*$ intersect (otherwise configuration $\o_{n^*-1}$ would have a black bridge, violating the definition of $n^*$). Furthermore, $\D^{c^*}(\o_{n^*-1}) \geq 1$, since $\o_{n^*-1}(v^*)=0$ by construction. We claim that the energy difference in every column $c\neq c^*$ is also greater than or equal to one for configuration $\o_{n^*-1}$, namely
\begin{equation}
\label{eq:wgtr2}
	\D^{c}(\o_{n^*-1}) \geq 1, \quad \forall \, c\neq c^*.
\end{equation}
These inequalities follow from Lemma~\ref{lem:bqbso_opengrid} after noticing that in each of these $L-1$ columns configuration $\o_{n^*-1}$ cannot display neither a black vertical bridge (by definition of $n^*$), nor a gray vertical bridge, since every column $c \neq c^*$ has at least one black particle (at the intersection with row $r^*$). Summing the energy difference of all columns we get
\begin{equation}
\label{eq:wgtr2lm1}
	\D(\o_{n^*-1}) = \sum_{j=0}^{L-1} \D^{c_j}(\o_{n^*-1}) \geq L.
\end{equation} 
If $\D(\o_{n^*-1}) \geq L+1$, then the proof is complete. If instead $\D(\o_{n^*-1}) = L$, consider the configuration $\o_{n^*-2}$ preceding $\o_{n^*-1}$ in the path $\o$. By construction, the configuration $\o_{n^*-2}$ differs from $\o_{n^*-1}$ by a single-site update and thus
\begin{equation}
\label{eq:pm1}
	\D(\o_{n^*-2})=\D(\o_{n^*-1})\pm1.
\end{equation}
Consider the case where $\D(\o_{n^*-2}) = \D(\o_{n^*-1})-1 = L-1$. In this case, thanks to the pigeonhole principle, the configuration $\o_{n^*-2}$ must have either \textit{at least} one column $c$ with $\D^c(\o_{n^*-2})=0$. By Lemma~\ref{lem:bqbso_opengrid}, $\o_{n^*-2}$ must have a bridge in this column, see Figure~\ref{fig:nogb}. However it cannot be a black bridge, due to the definition of $n^*$, and neither a gray bridge, since it would not be possible to obtain a black bridge in row $r_*$ (which $\o_{n^*}$ has) with only two admissible single-site updates (at least five steps are needed, as shown in Figure~\ref{fig:nogb}).
\begin{figure}[!h]
	\centering
	\includegraphics[scale=0.95]{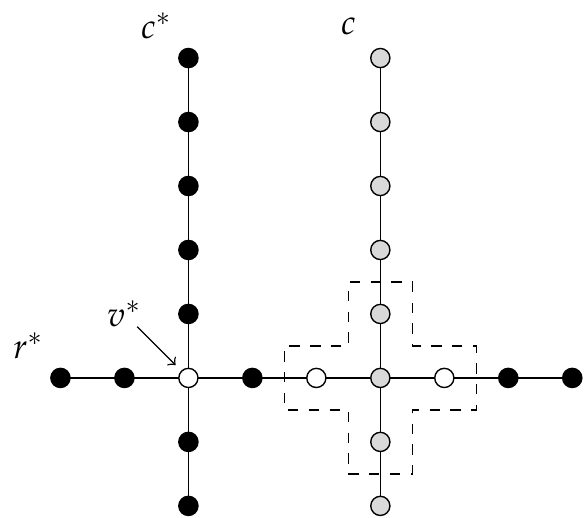}
	\caption{The dashed line encloses the $5$ sites that should be updated if configuration $\o_{n^*-2}$ had a gray bridge in column $c$}
	\label{fig:nogb}
\end{figure}
\FloatBarrier
\noindent Therefore, it is not possible that $\D(\o_{n^*-2}) = \D(\o_{n^*-1})-1$, and combining~\eqref{eq:wgtr2lm1} and~\eqref{eq:pm1} yields
\[
	\D(\o_{n^*-2})=\D(\o_{n^*-1}) + 1 = L+1. \qedhere
\]
\end{proof}

\subsection*{Reduction algorithm for WR configurations (open boundary conditions)}
We now briefly illustrate how the reduction algorithm presented in Section~\ref{sec5} should be adjusted when $\L$ is a square lattice with open boundary conditions. Assume that we want to build a path from a WR configuration $\s \in \cX$ to a target stable configuration, say $\bbb \in \ss$, consisting of particles of type $b$ to which we simply refer as \textit{black particles}. The underlying idea is the same as that of the reduction algorithm presented in Section~\ref{sec5} for square lattices with periodic boundary conditions: The procedure yields a path in which black particles are progressively added column by column to the original configuration $\s$ removing all non-black particles when necessary.

In order to be able to start this procedure, a suitable initial configuration $\s$ for this iterative procedure needs to satisfy a condition weaker than~\eqref{eq:racond_toricgrid}, thanks to the open boundary conditions of $\L$: Indeed, we require only that the configuration $\s$ has all the sites in the first column $c_0$ either empty or occupied by black particles, \ie
\begin{equation}
\label{eq:racond_opengrid}
	\s(v) \in \{0,b\} \quad \forall \, v \in c_0.
\end{equation}
The algorithm we are about to describe uses in a crucial way the open boundary conditions of $\L$ and, in particular, the fact that all the sites in the first column $c_0$ have no left neighboring sites. For this reason, the procedure yields a path with a lower energy height than in the case of a square lattice with periodic boundary conditions.

The path $\o: \s \to \bbb$ is built as the concatenation of $L$ paths $\o^{(1)}, \dots, \o^{(L)}$. Path $\o^{(j)}$ goes from $\s_{j}$ to $\s_{j+1}$, where we define $\s_1:=\s$ and, for $j=2,\dots,L+1$,
\[
	\s_{j}(v) :=
	\begin{cases}
		b 	& \text{ if } v \in \bigcup_{i=0}^{j-2} c_j,\\
		0 & \text{ if } v \in c_{j-1} \text{ and } \s(v) \neq b,\\
		\s(v) & \text{ if } v \in c_{j-1} \text{ and } \s(v)= b \text{ or } v \in \bigcup_{i=j}^{L-1} c_j.
	\end{cases}
\]
It can be checked that indeed $\s_{L+1}=\bbb$. For $j=1,\dots,L$ the path $\o^{(j)}=(\o^{(j)}_1, \dots, \o^{(j)}_{2K+1})$ consist of $2K+1$ moves (some of them possibly void), with $\o^{(j)}_1=\s_j$ and $\o^{(j)}_{2K+1}=\s_{j+1}$. We repeat iteratively the following procedure for every $i=1,\dots,2K$:
\begin{itemize}
\item If $i \equiv 1 \pmod 2$, consider site $v=(j, (i-1)/2)$.
	\begin{itemize}
		\item[-] If $\o^{(j)}_i(v) \in \{0,b\}$, then set $\o^{(j)}_{i+1}(v)=\o^{(j)}_{i}(v)$.
		\item[-] If $\o^{(j)}_i(v) \not  \in \{0,b\}$, then remove the non-black particle in site $v$ from configuration $\o^{(j)}_i$, obtaining in this way a new configuration $\o^{(j)}_{i+1}$ with $H(\o^{(j)}_{i+1}) = H(\o^{(j)}_{i})+1$.
	\end{itemize}
\item If $i \equiv 0 \pmod 2$, consider site $v=(j-1,i/2-1)$.
	\begin{itemize}
		\item[-] If $\o^{(j)}_i(v)=b$, then set $\o^{(j)}_{i+1}(v)=\o^{(j)}_i(v)=b$.
		\item[-] If $\o^{(j)}_i(v)=0$, then add a black particle in site $v$, obtaining in this way a new configuration $\o^{(j)}_{i+1}$ such that $\o^{(j)}_{i+1}(v)=b$ and thus $H(\o^{(j)}_{i+1}) = H(\o^{(j)}_{i})-1$. The new configuration does not violated the WR constraints, since, by construction, all the neighboring sites of $v$ are either empty or occupied by black particles. In particular, the site at the right of $v$ has possibly been emptied in the previous step.
	\end{itemize}
\end{itemize}
Note that for the last path $\o^{(L)}$ all steps corresponding to odd values of $i$ are void, since there is no column $c_{L}$. 

In words, the reduction algorithm alternately removes a non-black particle (if any) and adds a black particle, progressively column by column. For every $j=1, \dots, L$, the configurations $\s_j$ and $\s_{j+1}$ satisfy
\[
	H(\s_{j+1}) \leq H(\s_j),
\]
Indeed, by looking at the way the path $\o^{(j)}$ connecting them is defined, the number of black particles added in column $c_{j-1}$ is greater than or equal to the number of gray particles removed in column $c_{j}$. Moreover,
\[
	\Phi_{\o^{(j)}} \leq H(\s_j) + 1,
\]
since along the path $\o^{(j)}$ every particle removal (if any) is always followed by a particle addition. The latter two inequalities imply that the path $\o: \s \to \bbb$ created by concatenating $\o^{(1)}, \dots, \o^{(L)}$ satisfies
\[
	\Phi_{\o} \leq H(\s)+1,
\]
which shows that $\Phi(\s,\bbb)-H(\s) \leq 1$.\\

We remark that, with a few minor tweaks, we can define a similar reduction algorithm that builds a path $\o$ from a configuration $\s$ to the stable configuration $\bs^{(m)}$ such that $\Phi_{\o} \leq H(\s)+1$, provided that the initial configuration has either empty sites or particles of type $m$ on the first column, \ie
\begin{equation}
\label{eq:racond_opengrid_bis}
	\s(v) \in \{0,m\} \quad \forall \, v \in c_0.
\end{equation}

The next proposition uses the reduction algorithm to exhibit a reference path between any pair of stable configurations $\aa,\bb \in \ss$ with a prescribed energy height. 

\begin{prop}[Reference path between stable WR configurations]\label{prop:refpathwro}
Consider the energy landscape corresponding to the Metropolis dynamics of the Widom-Rowlinson model on a square lattice $\L$ with open boundary conditions. Then, for any pair of stable configurations $\aa,\bb \in \ss$ there exists a path $\o^*: \aa \to \bb$ in $\cX$ such that
\[
	\Phi_{\o^*} - H(\aa) = \min\{K,L\}+1.
\]
\end{prop}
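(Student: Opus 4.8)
The plan is to build the reference path as a concatenation of two pieces $\o^{(1)}: \aa \to \s^*$ and $\o^{(2)}: \s^* \to \bb$, mirroring case (b) of the periodic proof (Proposition~\ref{prop:refpathwrt}) but exploiting the weaker starting requirement~\eqref{eq:racond_opengrid} of the open-boundary reduction algorithm. Associating the color gray to the particle type of $\aa$ and black to that of $\bb$, and assuming without loss of generality that $K \le L$ (so $\min\{K,L\}=K$), the crucial structural saving is that here it suffices to clear only the first \emph{column} $c_0$ rather than an entire vertical stripe: since the sites of $c_0$ have no left neighbor, condition~\eqref{eq:racond_opengrid} asks merely that $c_0$ be empty or black, and emptying it costs $K$ moves instead of the $2K$ needed in the periodic case.

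Concretely, I would take $\s^*$ to be the configuration obtained from $\aa$ by deleting the $K$ gray particles of column $c_0$, that is $\s^*(v)=0$ for $v \in c_0$ and $\s^*(v)=\aa(v)$ otherwise. The first path $\o^{(1)}$ removes these $K$ particles one at a time; every removal is an admissible single-site update that raises the energy by $1$ (and trivially preserves the hard-core constraints, as we only delete particles). The energy profile is therefore monotone increasing, so $H(\s^*)=H(\aa)+K$ and $\Phi_{\o^{(1)}}=H(\s^*)=H(\aa)+K$. By construction $\s^*$ has all of $c_0$ empty, hence it satisfies~\eqref{eq:racond_opengrid} and is a legitimate input for the open-boundary reduction algorithm with target $\bb$.

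Applying that algorithm yields a path $\o^{(2)}: \s^* \to \bb$ with $\Phi_{\o^{(2)}} \le H(\s^*)+1 = H(\aa)+K+1$. Since the height of a concatenated path is the maximum of the heights of its pieces, the path $\o^*$ obtained by concatenating $\o^{(1)}$ and $\o^{(2)}$ satisfies
\[
	\Phi_{\o^*} = \max\{\Phi_{\o^{(1)}}, \Phi_{\o^{(2)}}\} \le H(\aa)+K+1 = H(\aa)+\min\{K,L\}+1,
\]
so that $\Phi_{\o^*} - H(\aa) \le \min\{K,L\}+1$. The matching reverse inequality is immediate from Proposition~\ref{prop:lowerog_WR}, which gives $\Phi(\aa,\bb)-H(\aa) \ge \min\{K,L\}+1$, together with the trivial bound $\Phi_{\o^*} \ge \Phi(\aa,\bb)$; these force the equality $\Phi_{\o^*} - H(\aa) = \min\{K,L\}+1$ for the constructed path.

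Rather than a genuine obstacle, the only point requiring care is verifying that the peak $H(\aa)+K$ reached while emptying $c_0$ does not exceed the peak $H(\aa)+K+1$ incurred by the reduction phase, so that the overall height of $\o^*$ is governed by the single $+1$ overshoot of $\o^{(2)}$; the rest is routine once the intermediate configuration $\s^*$ is correctly identified. I expect the argument to be lighter than in the periodic case, its whole conceptual content being the observation that open boundaries let the reduction algorithm start after clearing just one column.
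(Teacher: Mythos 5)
Your proposal is correct and follows essentially the same route as the paper's proof: the same intermediate configuration $\s^*$ obtained by emptying column $c_0$, the same $K$-step removal path $\o^{(1)}$, and the same application of the open-boundary reduction algorithm for $\o^{(2)}$. The only (harmless) cosmetic difference is that you obtain the exact equality by combining your upper bound with the lower bound of Proposition~\ref{prop:lowerog_WR}, whereas the paper asserts directly that the reduction algorithm attains the height $H(\s^*)+1$.
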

\begin{proof} Without loss of generality, we may assume $K \leq L$ and show that there exists a path $\o^*: \aa \to \bb$ such that $\Phi_{\o^*} - H(\aa) = K +1$. We describe just briefly how the reference path $\o^*$ is constructed, since it is very similar to the one given in case (b) of the proof of Proposition~\ref{prop:refpathwrt}. Figure~\ref{fig:refpathWRopen} shows some snapshots of the reference path for a $8 \times 8$ square lattice using again the convention that the particles types in $\aa$ and $\bb$ are colored in gray and black, respectively. The path $\o^*$ is the concatenation of two shorter paths, $\o^{(1)}$ and $\o^{(2)}$, where $\o^{(1)}: \aa \to \s^*$ and $\o^{(2)}: \s^* \to \bb$, where $\s^*$ is the WR configuration that differs from $\aa$ only by having all sites of the leftmost column $c_0$ empty, \ie
\[
	\s^*(v):=
	\begin{cases}
		\aa(v) & \text{ if } v \in \L \setminus c_0,\\
		0 & \text{ if } v \in c_0.
	\end{cases}
\]
The path $\o^{(1)}$ consists of $K$ steps, at each of which we remove the first gray particle in $c_0$ in lexicographic order from the previous configuration. The last configuration is precisely $\s^*$, which has energy $H(\s^*)=H(\aa)+ K$, and, trivially, $\Phi_{\o^{(1)}} = H(\aa)+ K$. The second path $\o^{(2)}: \s^* \to \bb$ is then constructed by means of the reduction algorithm, which can be used since configuration $\s^*$ is a suitable initial configuration for it, satisfying condition~\eqref{eq:racond_opengrid}. The algorithm guarantees that $\Phi_{\o^{(2)}} = H(\s^*) +1$ and thus the concatenation of the two paths $\o^{(1)}$ and $\o^{(2)}$ yields a path $\o^*$ with $\Phi_{\o^*} = H(\aa) + K +1$.
\end{proof}
\captionsetup[subfigure]{labelformat=empty}
\begin{figure}[!h]
	\centering
	\subfloat[$\aa$]{\includegraphics[scale=1]{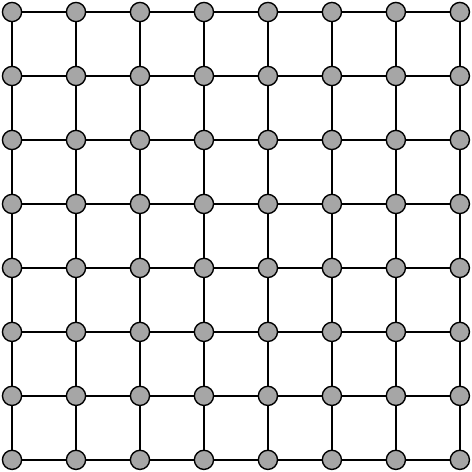}}
	\hspace{0.5cm}
	\subfloat[$\s^*$]{\includegraphics[scale=1]{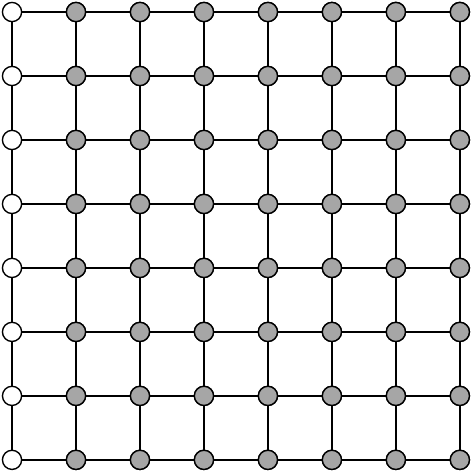}}
	\hspace{0.5cm}
	\subfloat[$\o^{(2)}_2$]{\includegraphics[scale=1]{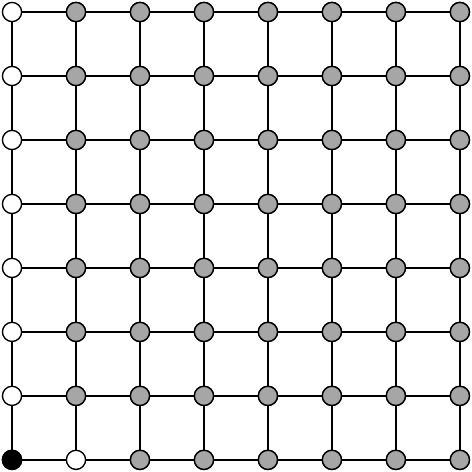}}
	\\
	\subfloat[$\o^{(2)}_{16}$]{\includegraphics[scale=1]{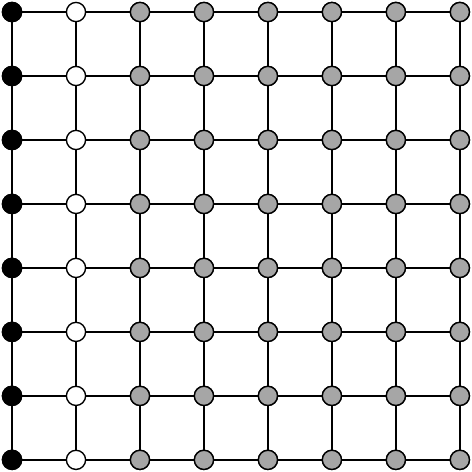}}
	\hspace{0.5cm}
	\subfloat[$\o^{(2)}_{18}$]{\includegraphics[scale=1]{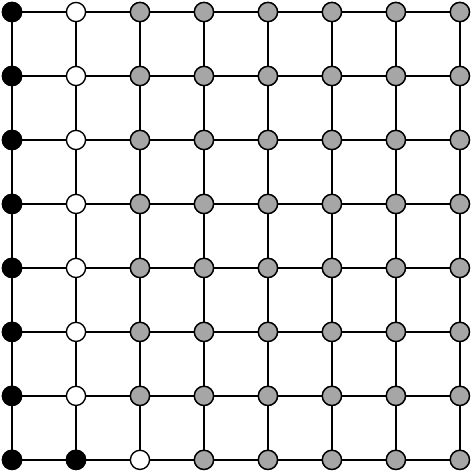}}
	\hspace{0.5cm}
	\subfloat[$\bb$]{\includegraphics[scale=1]{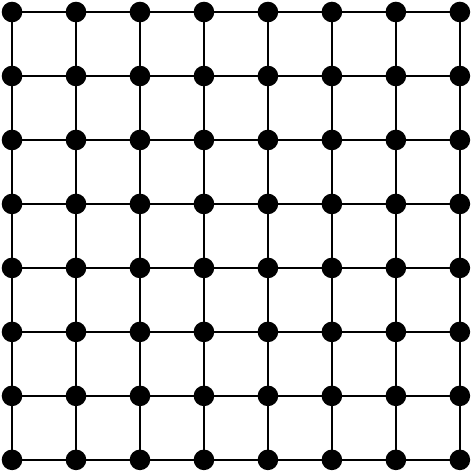}}
	\caption{Some snapshots of the reference path from $\aa$ to $\bb$ for the $8\times 8$ square lattice with open boundary conditions}
	\label{fig:refpathWRopen}
\end{figure}
\FloatBarrier

\begin{proof}[Proof of Theorem~\ref{thm:structuralpropertiesWR}(i)-(ii)]
For the Widom-Rowlison model on a square lattice $\L$ with open boundary conditions, statement (i) is an immediate consequence of the lower bound for the communication height between stable configurations given in Proposition~\ref{prop:lowerog_WR} and the matching upper bound given by the reference path exhibited in Proposition~\ref{prop:refpathwro}.

We now prove statement (ii) using once more the reduction algorithm described earlier in this section. The goal is to show that for every configuration $\s  \not\in \ss$ the inequality $\Phi(\s,\ss) - H(\s) \leq \min\{K,L\}$ holds. Since we may assume, without loss of generality, that $K \leq L$, it is enough to prove that
\[
	\Phi(\s,\ss) - H(\s) \leq K, \quad \forall \, \s  \not\in \ss.
\]
We distinguish two cases, depending on whether (a) $\s$ has a vertical $m$-bridge on the first column $c_0$ for some $m =1,\dots,\mm$ or (b) $\s$ has no vertical bridge on $c_0$.

In case (a), $\s$ has only particles of type $m$ on column $c_0$ and, as such, is a suitable starting configuration for the reduction algorithm with target state $\bs^{(m)}$, since condition~\eqref{eq:racond_opengrid_bis} is satisfied. Hence, we can build a path $\o: \s \to \bs^{(m)}$ such that $\Phi_{\o} \leq H(\s)+1$, showing that in this case $\Phi(\s,\bs^{(m)}) - H(\s) \leq 1$.

Consider now case (b). Since there is no vertical bridge on the first column $c_0$, Lemma~\ref{lem:bqbso_opengrid} implies that $\s$ has at most $K-1$ particles on $c_0$; denote by $g$ their number. In this case we create a path $\o$ from $\s$ to any stable configuration $\bs \in \ss$ as the concatenation of two shorter paths, $\o^{(1)}$ and $\o^{(2)}$, where $\o^{(1)}: \s \to \s^*$, $\o^{(2)}: \s^* \to \bs$ and the intermediate WR configuration $\s^*$ is the one obtained from $\s$ by removing the particles residing in the leftmost column $c_0$, \ie
\[
	\s^*(v):=
	\begin{cases}
		\s(v) 	& \text{ if } v \in \L \setminus c_0,\\
		0 			& \text{ if } v \in c_0.
	\end{cases}
\]
The path $\o^{(1)}$ can be easily defined by progressively removing all $g$ particles in column $c_0$, increasing the energy by $1$ at each step. Therefore, the configuration $\s^*$ is such that $H(\s^*)-H(\s) = g$ and the following inequality holds:
\[
	\Phi_{\o^{(1)}} \leq H(\s^*) = H(\s)+g.
\]
The path $\o^{(2)}: \s^* \to \bs$ is then constructed by means of the reduction algorithm described earlier, using $\s^*$ as initial configuration (condition~\eqref{eq:racond_opengrid_bis} is satisfied for any $m$) and $\bs$ as target configuration. This procedure guarantees that
\[
	\Phi_{\o^{(2)}} \leq H(\s^*) + 1.
\]
The concatenation of the two paths $\o^{(1)}$ and $\o^{(2)}$ then gives a path $\o: \s \to \bs$ that satisfies the inequality $\Phi_{\o} \leq H(\s)+ g + 1$ and, since $g < K$ by construction, we obtain
\[
	\Phi(\s,\ss)-H(\s) \leq \Phi(\s,\bs)-H(\s) = g+1 \leq K. \qedhere
\]
\end{proof}

\textbf{Acknowledgments} The author is supported by NWO grant 639.033.413. The author is grateful to F.R.~Nardi, S.C.~Borst, and  J.S.H.~van Leeuwaarden for the precious feedback and helpful discussions related to this work. 

\bibliographystyle{plain}
\bibliography{library}
\end{document}